\newtheorem{thm}{Theorem}[section]
\newtheorem{prop}[thm]{Proposition}
\newtheorem{lemma}[thm]{Lemma}
\newtheorem{corollary}[thm]{Corollary}
\newtheorem{conjecture}[thm]{Conjecture}
\newtheorem*{thm*}{Theorem}
\theoremstyle{definition}
\newtheorem{definition}[thm]{Definition}
\theoremstyle{remark}
\newtheorem{remark}[thm]{Remark}
\newtheorem{example}[thm]{Example}
\numberwithin{equation}{section}
\newcommand{\cals}{\mathcal{S}}
\newcommand{\calp}{\mathcal{P}}
\newcommand{\calc}{\mathcal{C}}
\newcommand{\cald}{\mathcal{D}}
\newcommand{\calg}{\mathcal{G}}
\newcommand{\za}{\alpha}
\newcommand{\zb}{\beta}
\newcommand{\zd}{\delta}
\newcommand{\zg}{\gamma}
\newcommand{\zG}{\Gamma}
\newcommand{\zs}{\sigma}
\newcommand{\zL}{\Lambda}
\newcommand{\zO}{\Omega}
\newcommand{\kb}{\Bbbk}
\newcommand{\Hom}{\textup{Hom}}
\newcommand{\add}{\textup{add}}
\newcommand{\coker}{\textup{coker}}
\newcommand{\rad}{\textup{rad}\,}
\newcommand{\Ext}{\textup{Ext}}
\newcommand{\cmp}{\textup{CMP}}
\newcommand{\scmp}{\underline{\textup{CMP}}}
\newcommand{\scat}{\zO(\textup{mod}\,A)} 
\newcommand{\BM}{\Theta} 
\newcommand{\diag}{\textup{Diag}}
\newcommand{\diags}{\textup{Diag}(\cals)}
\newcommand{\arc}{\textup{Arc}}
\newcommand{\arcp}{\textup{Arc}(\calp)}
\newcommand{\zgg}{\tilde \zg}
\newcommand{\zdd}{\tilde \zd}
\begin{document}

\title[On Gorenstein algebras of finite Cohen-Macaulay type]{On Gorenstein algebras of finite Cohen-Macaulay type: dimer tree algebras and their skew group algebras} 
\author{Ralf Schiffler}
\thanks{The first author was supported by the NSF grant  DMS-2054561.}
\address{Department of Mathematics, University of Connecticut, Storrs, CT 06269-1009, USA}
\email{schiffler@math.uconn.edu}

\author{Khrystyna Serhiyenko}
\thanks{The second author was supported by the NSF grant DMS-2054255.}
\address{Department of Mathematics, University of Kentucky, Lexington, KY 40506-0027, USA}
\email{khrystyna.serhiyenko@uky.edu}


\maketitle
\setcounter{tocdepth}{2}

\begin{abstract} Dimer tree algebras are a class of non-commutative Gorenstein algebras of Gorenstein dimension 1.
In  previous work we showed that the stable category of Cohen-Macaulay modules of a dimer tree algebra $A$ is a 2-cluster category of Dynkin type $\mathbb{A}$. Here we show that, if $A$ has an admissible action by the group $G$ with two elements, then the stable Cohen-Macaulay category of the skew group algebra $AG$ is a 2-cluster category of Dynkin type $\mathbb{D}$. This result is reminiscent of and inspired by a result by Reiten and Riedtmann, who showed that for an admissible $G$-action on the path algebra of type $\mathbb{A}$ the resulting skew group algebra is of type $\mathbb{D}$.  Moreover, we provide a geometric model of the syzygy category of $AG$ in terms of a punctured polygon $\mathcal{P}$ with a checkerboard pattern in its interior, such that the 2-arcs in $\mathcal{P}$ correspond to indecomposable syzygies in $AG$ and 2-pivots correspond to morphisms. In particular, the dimer tree algebras and their skew group algebras are Gorenstein algebras of finite Cohen-Macaulay type $\mathbb{A}$ and $\mathbb{D}$ respectively. We also provide examples of types $\mathbb{E}_6,\mathbb{E}_7,$ and $\mathbb{E}_8$.
\end{abstract}
\tableofcontents

\section{Introduction}
We introduced the class of dimer tree algebras in \cite{SS3}. A \emph{dimer tree algebra} $A$ is the Jacobian algebra $\textup{Jac}(Q,W)$ of a quiver $Q$ without loops and 2-cycles together with a canonical potential $W$ satisfying the following two conditions. Every arrow of $Q$ lies in at least one oriented cycle, and the dual graph of $Q$ is a tree. The latter condition explains half of the terminology ``dimer tree''. The other half stems from the fact that every dimer tree algebra induces a dimer model on the disk, also known as Postnikov diagram. These dimer models appear in cluster structures on Grassmannians as well as in mathematical physics. For example, the Jacobian algebras arising from the coordinate rings of the Grassmannians $\textup{Gr}(3,n)$ are dimer tree algebras. Dimer models and their algebras have been studied extensively, see \cite{HK, Po,  JKS, BKM, Pr} and the references therein; for their connection to homological mirror symmetry, see \cite{Bocklandt}.

Dimer tree algebras are non-commutative Gorenstein algebras of Gorenstein dimension one \cite{KR}. Therefore the category of (maximal) Cohen-Macaulay modules $\cmp A$ of a dimer tree algebra $A$ is equivalent to the category of syzygies over $A$. Furthermore, the stable category $\scmp A$ is a triangulated 3-Calabi-Yau category \cite{KR} that is equivalent to the singularity category of $A$ \cite{Bu}. 

In our previous work \cite{SS3,SS4}, we introduce a derived invariant, the \emph{total weight} of a dimer tree algebra $A$. We show that the total weight is an even integer $2N$. We then construct a regular $(2N)$-gon $\cals$ equipped with a certain checkerboard pattern and show that there are equivalences of triangulated 3-Calabi-Yau categories
\begin{equation}\label{equiv}\scmp A \cong \diag\, \cals \cong \calc^2_{\mathbb{A}_{N-2}}\end{equation}
between the stable Cohen-Macaulay category $\scmp A$ of $A$, the combinatorial category of 2-diagonals $\diag \,\cals$ in $\cals$ and the 2-cluster category  $\calc^2_{\mathbb{A}_{N-2}}=\cald^b(\textup{mod}\,kQ)/\tau^{-1}[2]$ of Dynkin type $ {\mathbb{A}_{N-2}}$.  The latter equivalence was shown earlier in \cite{BM1}. Let us point out that we use the notation of Thomas \cite{T} for the higher cluster categories which is different from that of Iyama \cite{AIR}. In our notation the original cluster category of \cite{BMRRT} is the 1-cluster category.

In particular the number of indecomposable Cohen-Macaulay modules is finite, and equal to $N(N-2)$.

In the commutative case, the problem of classifying commutative Gorenstein rings $R$ of finite Cohen-Macaulay type has been studied in the 80s by several authors \cite{AV,Au,BGS, Es, Kn}.
In the case where $R$ has Krull dimension two, the classification is in terms of Dynkin diagrams of type $\mathbb{A,D,E}$. In fact, the stable Cohen-Macaulay category is equivalent to the 0-cluster category $\cald^b(\textup{mod}\,kQ)/\tau^{-1}[0]$, where $Q$ is a Dynkin quiver of type $\mathbb{A,D,E}$. Much more recently, in \cite{AIR}, it was shown that the stable Cohen-Macaulay category over a Gorenstein isolated singularity is equivalent to some higher cluster category, generally not of finite type. It is interesting to note that their result also applies to the centers of the Jacobian algebras arising from consistent dimer models on a torus; the Cohen-Macauley category of the center is shown to be equivalent to the 1-cluster category of an algebra obtained by cutting arrows of a perfect matching from the quiver and removing a source vertex of the resulting quiver.

For non-commutative Gorenstein rings, the question of finite Cohen-Macaulay type is wide open. Our result explained above shows that  the class of dimer tree algebras is of finite Cohen-Macaulay type $\mathbb{A}$, which suggest an analogy to the case of commutative Gorenstein rings of dimension two. 

In this paper, we construct a class of Jacobian algebras of Cohen-Macaulay type $\mathbb{D}$  and give examples for types $\mathbb{E}_6,\mathbb{E}_7,\mathbb{E}_8$ as well. These results provide further evidence for the analogy to the commutative case. An intriguing aspect of these examples  is that for all our algebras, the stable Cohen-Macaulay category is equivalent to the 2-cluster category of the respective Dynkin type. This seems to suggest a deeper and possibly broader connection between Cohen-Macaulay modules (or singularity categories) of Jacobian algebras on the one hand and 2-cluster categories on the other hand. 
We point out that, if the Cohen-Macaulay category of a Jacobian algebra is equivalent to a higher cluster category, then it should be a 2-cluster category, because it must have the 3-Calabi-Yau property.

Let us now explain the construction of our class of algebras that have Cohen-Macaulay type $\mathbb{D}$.  We obtain these algebras as skew group algebras $AG$ of a dimer tree algebra $A$ with respect to the action of a group $G$ of order two. This construction is inspired by the work of Reiten and Riedtmann in 1985 \cite{RR}, which in particular provided a way to realize path algebras of certain Dynkin quivers of type $\mathbb{D}$ as skew group algebras of path algebras of Dynkin quivers of type $\mathbb{A}$. In our situation, the skew group algebra is again a Jacobian algebra of a quiver with potential, thanks to a result of \cite{AP}. 

Our construction is very general. Indeed, \emph{every} dimer tree algebra gives rise to several of these skew group algebras --- one for each boundary arrow of the quiver of the dimer tree algebra, see Proposition~\ref{prop gluing 1}.
We show that the $G$-action on the dimer tree algebra $A$ induces a $G$-action on the 
Cohen-Macaulay category $\cmp A$ as well as on the associated checkerboard polygon $\cals$ in Proposition~\ref{prop G action on S}.
 We then prove in Theorems~\ref{thm equivalences} and \ref{thm Fequiv} that our equivalences (\ref{equiv}) carry over to equivalences of triangulated 3-Calabi-Yau categories

\begin{equation}\label{equiv2}\scmp AG \cong \arc\, \calp \cong \calc^2_{\mathbb{D}_{(N+1)/2}}\end{equation}
between the stable Cohen-Macaulay category of the skew group algebra $AG$, the combinatorial category of 2-arcs on the punctured $N$-gon and the 2-cluster category of Dynkin type ${\mathbb{D}_{(N+1)/2}}$. 
In particular, the number of indecomposable non-projective Cohen-Macaulay modules is equal to 
$N(N+1)/2$. 

Furthermore, our checkerboard pattern on the $2N$-gon $\cals$ induces a checkerboard pattern on the punctured $N$-gon $\calp$. As in the case of dimer tree algebras, the checkerboard pattern allows us to construct the complete Auslander-Reiten quiver of the Cohen-Macaulay category of the skew group algebra, see Theorem~\ref{thm checkerboard}. Indeed, for every 2-arc $\zg$ in $\arc\, \calp$, the crossing points of $\zg$ with the checkerboard pattern determines a projective presentation of the corresponding Cohen-Macaulay module $M_\zg$. The syzygy functor $\zO$, which is also the inverse shift in the triangulated category $\scmp\, AG$, is given geometrically as a rotation by $2\pi/N$ of $\calp$. Therefore the complete projective resolution of $M_\zg$ is determined by the rotation orbit of the 2-arc $ \zg$. In particular, the projective resolution is periodic of period $N$ or $2N$. 
In Definition~\ref{def 66}, we give a simple method to compute the total weight $N$ directly from the quiver of the (basic version of the) skew group algebra. 

As applications, we obtain in Corollary~\ref{cor 67} that the indecomposable Cohen-Macaulay modules are rigid and extensions between two indecomposables correspond to crossing points between the associated 2-arcs in the punctured disk. We further show in  Proposition~\ref{prop 68}, that the $A$-module $M\oplus \zs M$ is $\tau$-rigid if and only if the induced $AG$-module $M\otimes_A AG$ is $\tau$-rigid,  where $\sigma$ is the nontrivial element of $G$ of order 2.  We conjecture that every indecomposable Cohen-Macaulay module over $A$ and $AG$ is $\tau$-rigid.

Several examples of the construction are given in section \ref{sect examples}. We point out that not every Jacobian algebra of Cohen-Macaulay type $\mathbb{D}$ can be realized as the skew group algebra of a dimer tree algebra; an example is given in section~\ref{ex72}. This should not be surprising, because it is also the case that not every path algebra of type $\mathbb{D}$ can be realized as a skew group algebra of an algebra of type $\mathbb{A}$. 
In section~\ref{sect 74}, we illustrate the relation to dimer models in an example. 
Finally, we provide examples of Jacobian algebras of Cohen-Macaulay types $\mathbb{E}$ in section~\ref{sect 75}.

\section{Recollections}
Let $\kb$ be an algebraically closed field. If $\zL$ is a finite-dimensional $\kb$-algebra, we denote by $\textup{mod}\,\zL$ the category of finitely generated right $\zL$-modules. Let $D$ denote the standard duality $D=\Hom(-,\kb)$. If $Q_\zL$ is the ordinary quiver of the algebra $\zL$, and $i$ is a vertex of $Q_\zL$,  we denote by $P(i),I(i),S(i)$ the corresponding indecomposable projective, injective, simple $\zL$-module, respectively. 

Let $\rad\, \zL$ denote the Jacobson radical of $\zL$. If $M\in \textup{mod}\,\zL$ its radical is defined as $\rad \,M= M (\rad\,\zL)$ and its top
as $\textup{top}\,M= M/\rad\, M$. Thus in particular $\textup{top}\,P(i) =S(i)$.
Given a module $M$, we denote by $\add \,M$ 
the full subcategory of $\textup{mod}\,\zL$ whose objects are direct sums of summands of $M$.
For further information about representation theory and quivers we refer to \cite{ASS,S2}.

\subsection{Basic algebras}
For every finite dimensional algebra $\zL$ over $\kb$ there exists a complete set of primitive orthogonal idempotents $e_1,e_2, \ldots e_n$ such that the $\zL$-module  $\zL$ decomposes into a sum on indecomposable projective modules $\zL=\oplus_{i=1}^n e_i\zL$. The algebra $\zL$ is said to be \emph{basic} if the $e_iA$ are pairwise non-isomorphic. In that case, one can define the quiver of the algebra, whose vertex set is in bijection with the set of idempotents $e_1,\ldots, e_n$. 

If $\zL$ is not basic, we can choose a subset $e_{j_1},\ldots,e_{j_m}$ of idempotents such that the $e_{j_k}\zL$ are pairwise non-isomorphic and such that every $e_i \zL$ is isomorphic to one of the $e_{j_k}\zL$. Let $e=\sum_{k=1}^m e_{j_k}$. Then the algebra $\zL^b=e\zL e$ is a basic algebra that is Morita equivalent to $\zL$, which means that there is an equivalence of categories 
\[\textup{mod}\,\zL\cong \textup{mod}\,\zL^b,\] see for example \cite[Corollary 6.10]{ASS}.  

\subsection{Cohen-Macaulay modules over 2-Calabi-Yau tilted algebras}\label{sect CM} 

Now let $\zL$ be a \emph{2-Calabi-Yau tilted algebra}. Thus $\zL$ is the endomorphism algebra of a cluster-tilting object in a 2-Calabi-Yau category. 
A $\zL$-module $M$ is said to be \emph{projectively Cohen-Macaulay} if $\Ext^i_\zL(M,\zL)=0$ for all $i>0$. In other words, $M$ has no extensions with projective modules.

We denote by $\cmp\,\zL$ the full subcategory of $\textup{mod}\,\zL$ whose objects are the projectively Cohen-Macaulay modules. This  is a Frobenius category. The projective-injective objects in $\cmp\,\zL$ are are precisely the projective $\zL$-modules. The corresponding stable category $\scmp\,\zL$ is triangulated, and its inverse shift is given  by the syzygy operator $\zO$ in $\textup{mod}\,\zL$.

Moreover, by Buchweitz's theorem \cite[Theorem 4.4.1]{Bu}, there exists a triangle equivalence between $\scmp\,\zL$ and the singularity category $\cald^b(\zL)/\cald^b_{perf} (\zL)$ of $\zL$.
Keller and Reiten showed in \cite{KR} that the category $\scmp \,\zL$ is 3-Calabi-Yau.

It was shown in  \cite{GES} that if $M\in \text{mod}\,\zL$ is indecomposable then the following are equivalent. 
 \begin{itemize}
\item [(a)] $M$ is a non-projective syzygy;
\item [(b)] $M \in \textup{ind}\,\scmp\,\zL$; 
\item [(c)] $\zO^2_\zL \tau_\zL M \cong M$.
\end{itemize}

We may therefore use the terminology ``syzygy'' and ``Cohen-Macaulay module'' interchangeably.

Two algebras are said to be \emph{derived equivalent} if there exists a triangle equivalence between their bounded derived categories.
Two algebras are said to be \emph{singular equivalent} if there exists a triangle equivalence between their singularity categories.

\subsection{Quivers with potentials}\label{sect QP}
A quiver $Q=(Q_0,Q_1,s,t)$ consists of a finite set of vertices $Q_0$, a finite set of arrows $Q_1$ and two maps $s,t\colon Q_1\to Q_0$, where $s$ is the source and $t$ is the target of the arrow. Thus if $\za\in Q_1$ then $\za\colon s(\za)\to t(\za)$. 

 A \emph{potential} $W$ on a quiver $Q$ is a  linear combination of non-constant  cyclic paths. For every arrow $\za\in Q_1$, the cyclic derivative $\partial _\za$ is defined on a cyclic path $\za_1\za_2\dots\za_t$ as 
\[ \partial_\za(\za_1\za_2\dots\za_t)=\sum_{p\colon \za_p=\za} \za_{p+1}\dots\za_t\za_1\dots\za_{p-1}\]
and extended linearly to the potential $W$.

The \emph{Jacobian algebra} $\textup{Jac}(Q,W)$ of the quiver with potential is the quotient of the (completed) path algebra $\kb Q$ by  (the closure of) the 2-sided ideal generated by all partial derivatives $\partial_\za W$, with $\za\in Q_1$. 
Two parallel paths in the quiver are called \emph{equivalent} if they are equal in $\textup{Jac}(Q,W)$.

If $Q$ has no oriented 2-cycles then $\textup{Jac}(Q,W)$ is 2-Calabi-Yau tilted by \cite{Amiot}.


\subsection{Translation quivers and mesh categories} We review here the notions of translation quiver and mesh category from \cite{Ri, H}. These notions are often used in order to define a category from combinatorial data. 

A {\em  translation
 quiver} $(\zG,\tau)$ is a quiver $\zG=(\zG_0,\zG_1)$ without loops
 together with an injective map $\tau\colon \zG_0'\to\zG_0$  (the {\em translation}) from a subset $\zG_0'$ of $\zG_0$ to $\zG_0$ such that, for all vertices $x\in\zG_0'$, $y\in \zG_0$, 
the number of arrows from $y \to x$ is equal to the number of arrows
 from $\tau x\to y$. 
Given a  translation quiver $(\zG,\tau)$, a \emph{polarization of} $\zG$ is
 an injective map $\sigma:\zG_1'\to\zG_1$, where $\zG_1'$ is the set of all arrows $\za\colon y\to x$ 
 with $x \in \zG_0'$, such that 
$\sigma(\za)\colon \tau x\to y$  for every arrow $\za\colon y\to x\in \zG_1$.
From now on we assume that $\zG$ has no multiple arrows. In that case, there is a unique polarization of $\zG$.

The {\em path category } of a translation quiver $(\zG,\tau)$ is the category whose  objects are
the vertices $\zG_0$ of $\zG$, and, given $x,y\in\zG_0$, the $\kb$-vector space of
morphisms from $x$ to $y$ is given by the $\kb$-vector space with basis
the set of all paths from $x $ to $y$. The composition of morphisms is
induced from the usual composition of  paths.
The {\em mesh ideal} in the path category of $\zG$ is the ideal
generated by the {\em mesh relations}
\begin{equation}\nonumber
m_x =\sum_{\za:y\to x} \sigma(\za) \za 
\end{equation}  
for all $x \in \zG_0'$.

The {\em  mesh category } of the translation quiver $(\zG,\tau)$ is the
quotient of its path 
category by the mesh ideal.

\subsection{2-cluster categories}\label{sect 2-cluster} 
We recall the definition of 2-cluster categories and their geometric models in Dynkin types $\mathbb{A}$ and $\mathbb{D}$ given by Baur and Marsh \cite{BM1,BM2}.  

We use the notation of  Thomas \cite{T}. Thus the 2-cluster category $\calc^2$ is defined as \[\calc^2_H=\cald^b(\textup{mod}\,H)/\tau^{-1}[2],\] where $H$ is a hereditary algebra,  $\tau$ is the Auslander-Reiten translation in the bounded derived category $\cald^b(\textup{mod}\,H)$ and $[2]$ denotes the second shift. 

\begin{remark}
  We point out that this terminology is different from Iyama's, who would rather call this category a 3-cluster category in order to reflect the fact that it is 3-Calabi-Yau. Here we follow Thomas and call it a 2-cluster category, because the definition involves the second shift. 
\end{remark}

Baur and Marsh gave geometric models for the $\calc^2_H$ in the case where the algebra $H$ is of Dynkin type $\mathbb{A}$ \cite{BM1} or $\mathbb{D}$ \cite{BM2}. In type $\mathbb{A}$ the 2-cluster category is equivalent to the category $\diags$ of 2-diagonals in a regular polygon $\cals$, and in type $\mathbb{D}$ the 2-cluster category is equivalent to the category $\arcp$ of 2-arcs in a punctured polygon $\calp$.

We point out that the results by Baur-Marsh are more general than what we present here. Indeed they obtain geometric models for $m$-cluster categories for all $m\ge 2$. The case $m=1$ had been done in \cite{CCS,S}. In this paper, we shall only need the case $m=2$, and we adapt the presentation and notation accordingly.

\subsubsection{Dynkin type $\mathbb{A}$}\label{sect A}
Let $H$ be a path algebra of type $\mathbb{A}_{N-2}$ and $\calc^2_{\mathbb{A}_{N-2}}$ the 2-cluster category.  
Let $\cals=\cals_{2N}$ be a regular polygon with $2N$ vertices and label the vertices $1,2,\ldots, 2N$ in clockwise order around the boundary. For $i,j\in\{1,2,\ldots, 2N\}$, $i\ne j$,  we denote by $(i,j)$ the straight line connecting $i$ and $j$. We say $(i,j)$ is a \emph{boundary segment}  of $\cals $ if $i$ and $j$ are neighbors on the boundary of $\cals$, and otherwise, we call $(i,j)$ a \emph{diagonal} in $\cals$. 

A diagonal $(i,j)$ is called a \emph{2-diagonal} if cutting $\cals$ along $(i,j)$ will produce two polygons that have an even number of vertices. In other words,  $(i,j)$ is a 2-diagonal if and only if $|j-i| $ is odd. 

The 2-diagonals are the indecomposable objects in the category $\diags$. The irreducible morphisms are given by the 2-pivots defined below.

A 2-diagonal $(i,j)$ admits the following \emph{2-pivots},
\[ 
 \xymatrix@R5pt{ &(i,j+2)\\ (i,j)\ar[ru]\ar[rd] \\&(i+2,j)} 
\]
unless the target of the pivot is not a 2-diagonal but a boundary segment. The addition in the coordinates is modulo $2N$.
Thus a 2-pivot fixes one endpoint of the diagonal and moves the other endpoint two positions further along the boundary in clockwise direction.

The Auslander-Reiten translation $\tau$ of a 2-diagonal $(i,j)$ is given by $\tau(i,j)= (i-2,j-2)$.  Hence $\tau$ is given geometrically as a rotation by $2\pi/N$. 

This data defines a translation quiver $\zG_{\diags}$ with  2-diagonals as vertices, 2-pivots as arrows and translation $\tau$, see the top  picture in Figure~\ref{fig AR 2cluster BM} for the case $N=7$. The category $\diags $ is defined as the mesh category of $\zG$. 

\begin{thm}\label{thm BM1}
 \cite{BM1}
 There is an equivalence of categories \[\BM_\mathbb{A} \colon\diag (\cals_{2N})\to\calc^2_{\mathbb{A}_{N-2}}.\]
   This equivalence induces an isomorphism from the quiver $\zG_{\diags}$ to the  Auslander-Reiten quiver of the 2-cluster category.
\end{thm}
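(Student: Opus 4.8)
The plan is to build the equivalence $\BM_\mathbb{A}$ by matching the combinatorial translation quiver $\zG_{\diags}$ with the Auslander--Reiten quiver of $\calc^2_{\mathbb{A}_{N-2}}$ and then invoking the fact that both categories are mesh categories of their respective AR quivers. More precisely, the strategy has three stages: first describe explicitly the AR quiver of the 2-cluster category $\calc^2_{\mathbb{A}_{N-2}}$; second, exhibit an isomorphism of translation quivers $\zG_{\diags} \xrightarrow{\ \sim\ } \zG_{\calc^2_{\mathbb{A}_{N-2}}}$; third, upgrade this isomorphism of quivers to an equivalence of the mesh categories they generate, and check it is $\kb$-linear and additive on the whole (Krull--Schmidt) categories.

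For the first stage, I would recall that $\cald^b(\textup{mod}\,H)$ for $H$ hereditary of type $\mathbb{A}_{N-2}$ has a well-known AR quiver, a $\mathbb{Z}\mathbb{A}_{N-2}$ strip, and that the indecomposables of $\calc^2_H = \cald^b(\textup{mod}\,H)/\tau^{-1}[2]$ are the $\tau^{-1}[2]$-orbits of the indecomposables of $\cald^b(\textup{mod}\,H)$; there are finitely many of them, and one counts them to be $N(N-2)$ — matching the count of 2-diagonals in $\cals_{2N}$ (each of the $2N$ vertices is the endpoint of $N-2$ 2-diagonals, giving $2N(N-2)/2$). The AR triangles of $\cald^b(\textup{mod}\,H)$ descend to AR triangles in the orbit category since $[2]$ and $\tau^{-1}$ are triangle autoequivalences, so $\calc^2_H$ has AR triangles and a well-defined AR quiver. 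The second stage is then a bijection on vertices sending the 2-diagonal $(i,j)$ to the appropriate indecomposable, under which the 2-pivots (fixing one endpoint, advancing the other by $2$) become the irreducible maps and the rotation $\tau(i,j)=(i-2,j-2)$ becomes the AR translation; this is essentially a bookkeeping identification of two explicit combinatorial patterns, and one checks the defining compatibility of arrows and translation (the number of arrows $y\to x$ equals the number of arrows $\tau x\to y$) holds on both sides because both are of the shape $\mathbb{Z}\mathbb{A}$ rolled up.

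For the third stage — which I expect to be the main obstacle — I would use the principle that a Krull--Schmidt triangulated (or exact) category whose AR quiver $\zG$ is known, and which has no oriented cycles / is ``standard'', is equivalent to the mesh category of $\zG$: one shows the composition of irreducible morphisms around each mesh vanishes (this is exactly the AR triangle relation, so the mesh relations $m_x$ hold in $\calc^2_H$), hence there is a full functor from the mesh category $\diag(\cals_{2N})$ onto (the additive hull of) $\calc^2_H$, and then one argues it is faithful by a dimension count of Hom-spaces, comparing $\dim_\kb \Hom$ in the mesh category with $\dim_\kb \Hom$ in $\calc^2_H$ along the AR quiver. The delicate point is verifying that $\calc^2_H$ is indeed standard (that the mesh relations generate \emph{all} relations), for which I would either cite the standardness of representation-finite hereditary derived categories and note it is inherited by the orbit category, or invoke the already-cited result of Baur--Marsh directly — since the theorem is attributed to \cite{BM1}, the cleanest route is to recall their argument rather than reprove it, and in the paper we only need the statement and the explicit description of $\zG_{\diags}$ for later use. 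Finally I would remark that, by construction, $\BM_\mathbb{A}$ restricts to the claimed isomorphism $\zG_{\diags}\cong$ AR quiver of $\calc^2_{\mathbb{A}_{N-2}}$.
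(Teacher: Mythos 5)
The paper does not prove this statement; Theorem~\ref{thm BM1} is stated in the Recollections section and cited directly to Baur--Marsh \cite{BM1}, so there is no proof in the paper to compare against. Your sketch is a faithful outline of the Baur--Marsh strategy --- count indecomposables in the orbit category and match the translation quiver data, then use standardness of $\calc^2_H$ (mesh relations generate all relations, inherited from the derived category of a representation-finite hereditary algebra) to promote the translation-quiver isomorphism to an equivalence of categories --- and you correctly identify standardness as the only nontrivial step. You also correctly conclude that in the context of this paper the right move is to cite \cite{BM1} rather than reprove it, which is exactly what the authors do. One small bookkeeping note: your vertex count $2N(N-2)/2=N(N-2)$ is right, but to see that each of the $2N$ boundary vertices lies on exactly $N-2$ two-diagonals you need to exclude the two adjacent vertices (those give boundary segments, not diagonals), leaving the $N-2$ odd differences in $\{3,5,\dots,2N-3\}$; you gesture at this but don't make it explicit.
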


 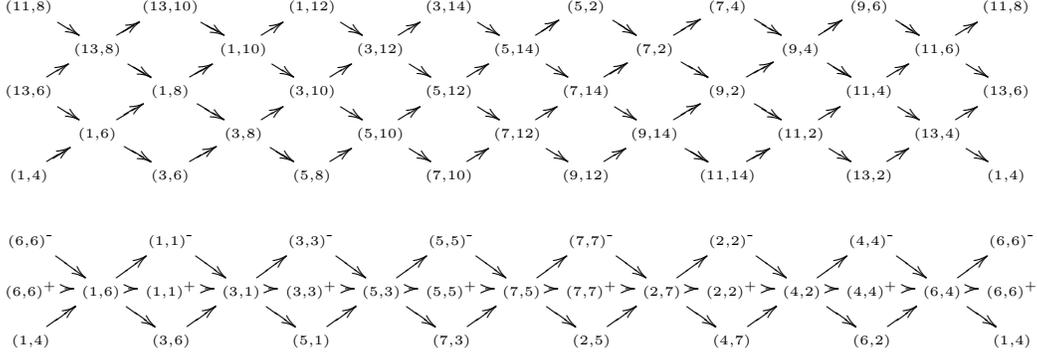
\begin{figure}
\begin{center}
\[
\begin{array}{l}
\xymatrix@R5pt@C2pt{
\scriptscriptstyle (11,8)\ar[dr]&&\scriptscriptstyle (13,10)\ar[dr]&&\scriptscriptstyle (1,12)\ar[dr]&&\scriptscriptstyle(3,14)\ar[dr]&&\scriptscriptstyle (5,2)\ar[dr]&&\scriptscriptstyle (7,4)\ar[dr]&&\scriptscriptstyle (9,6)\ar[dr]&&\scriptscriptstyle (11,8)
\\
&\scriptscriptstyle (13,8)\ar[dr]\ar[ur]&&\scriptscriptstyle (1,10)\ar[dr]\ar[ur]&&\scriptscriptstyle (3,12)\ar[dr]\ar[ur]&&\scriptscriptstyle (5,14)\ar[dr]\ar[ur]&&\scriptscriptstyle (7,2)\ar[dr]\ar[ur]&&\scriptscriptstyle (9,4)\ar[dr]\ar[ur]&&\scriptscriptstyle (11,6)\ar[ur]\ar[dr]
\\
\scriptscriptstyle (13,6)\ar[dr]\ar[ur]&&\scriptscriptstyle (1,8)\ar[dr]\ar[ur]&&\scriptscriptstyle (3,10)\ar[dr]\ar[ur]&&\scriptscriptstyle (5,12)\ar[dr]\ar[ur]&&\scriptscriptstyle (7,14)\ar[dr]\ar[ur]&&\scriptscriptstyle (9,2)\ar[dr]\ar[ur]&&\scriptscriptstyle (11,4)\ar[dr]\ar[ur]&&\scriptscriptstyle (13,6)
\\
&\scriptscriptstyle (1,6)\ar[dr]\ar[ur]&&\scriptscriptstyle (3,8)\ar[dr]\ar[ur]&&\scriptscriptstyle (5,10)\ar[dr]\ar[ur]&&\scriptscriptstyle (7,12)\ar[dr]\ar[ur]&&\scriptscriptstyle (9,14)\ar[dr]\ar[ur]&&\scriptscriptstyle (11,2)\ar[dr]\ar[ur]&&\scriptscriptstyle (13,4)\ar[dr]\ar[ur]&&
\\
\scriptscriptstyle (1,4)\ar[ur]&&\scriptscriptstyle (3,6)\ar[ur]&&\scriptscriptstyle (5,8)\ar[ur]&&\scriptscriptstyle (7,10)\ar[ur]&&\scriptscriptstyle (9,12)\ar[ur]&&\scriptscriptstyle (11,14)\ar[ur]&&\scriptscriptstyle (13,2)\ar[ur]&&\scriptscriptstyle (1,4)
}
\\ \\
\xymatrix@R7pt@C3.5pt{
\scriptscriptstyle(6,6)^{\mbox{\tiny -}}\ar[dr]&&\scriptscriptstyle(1,1)^{\mbox{\tiny -}}\ar[dr]&&\scriptscriptstyle(3,3)^{\mbox{\tiny -}}\ar[dr]&&\scriptscriptstyle(5,5)^{\mbox{\tiny -}}\ar[dr]&&\scriptscriptstyle(7,7)^{\mbox{\tiny -}}\ar[dr]&&\scriptscriptstyle(2,2)^{\mbox{\tiny -}}\ar[dr]&&\scriptscriptstyle(4,4)^{\mbox{\tiny -}}\ar[dr]&&\scriptscriptstyle(6,6)^{\mbox{\tiny -}}
\\
\scriptscriptstyle(6,6)^{\scalebox{0.8}{\tiny $+$}}\ar[r]&\scriptscriptstyle(1,6)\ar[dr]\ar[ur]\ar[r]&\scriptscriptstyle(1,1)^{\scalebox{0.8}{\tiny +}}\ar[r]&\scriptscriptstyle(3,1)\ar[dr]\ar[ur]\ar[r]&\scriptscriptstyle(3,3)^{\scalebox{0.8}{\tiny +}}\ar[r]&\scriptscriptstyle(5,3)\ar[dr]\ar[ur]\ar[r]&\scriptscriptstyle(5,5)^{\scalebox{0.8}{\tiny +}}\ar[r]&\scriptscriptstyle(7,5)\ar[dr]\ar[ur]\ar[r]&\scriptscriptstyle(7,7)^{\scalebox{0.8}{\tiny +}}\ar[r]&\scriptscriptstyle(2,7)\ar[dr]\ar[ur]\ar[r]&\scriptscriptstyle(2,2)^{\scalebox{0.8}{\tiny +}}\ar[r]&\scriptscriptstyle(4,2)\ar[dr]\ar[ur]\ar[r]&\scriptscriptstyle(4,4)^{\scalebox{0.8}{\tiny +}}\ar[r]&\scriptscriptstyle(6,4)\ar[ur]\ar[r]\ar[dr]&\scriptscriptstyle(6,6)^{\scalebox{0.8}{\tiny +}}
\\
\scriptscriptstyle(1,4) \ar[ur]&&\scriptscriptstyle(3,6) \ar[ur]&&\scriptscriptstyle(5,1) \ar[ur]&&\scriptscriptstyle(7,3) \ar[ur]&&\scriptscriptstyle(2,5)\ar[ur]&&\scriptscriptstyle(4,7) \ar[ur]&&\scriptscriptstyle(6,2) \ar[ur]&&\scriptscriptstyle(1,4)
\\
}
\end{array}\]

\caption{Both pictures illustrate the geometric realizations of  2-cluster categories. The top picture shows the AR-quiver of the 2-cluster category of  $\mathbb{A}_5$, where the labels are the 2-diagonals in the 14-gon in accordance with Theorem~\ref{thm BM1}. The bottom picture shows the AR-quiver in type $\mathbb{D}_4$, where the vertices are labeled by the 2-arcs in the punctured 7-gon as in Theorem~\ref{thm BM2}. Vertices with the same label have to be identified.}
\label{fig AR 2cluster BM}
\end{center}
\end{figure}

\subsubsection{Dynkin type $\mathbb{D}$}\label{sect BMD}
Now let $N\ge 7$ be an odd integer and $H$ be a path algebra of type $\mathbb{D}_{(N+1)/2}$. Let $\calc^2_{\mathbb{D}_{(N+1)/2}}$ be the 2-cluster category.  
Let $\calp=\calp_N$ denote a regular polygon with $N$ vertices and one puncture in the center. Label the vertices $1,2,\ldots, N$ in clockwise order around the boundary. For $i,j\in\{1,2,\ldots,N\}$, $i\ne j$, we denote by $(i,j)$ (the homotopy class of) a curve without selfcrossings from $i$ to $j$ that goes clockwise around the puncture. In other words, the puncture lies on the right  of the curve when traveling along from $i$ to $j$. 
Note that $(i,j)\ne (j,i)$. 
Furthermore, we denote by $(i,i)$ (the homotopy class of) the curve that starts at $i$ goes clockwise around the puncture once and ends at $i$. We call the curves $(i,i)$  \emph{loops} and in our figures we often draw them as a straight line from $i$ to the puncture. See Figure~\ref{fig 2arcs} for an illustration.

We say $(i,j)$, with $i\ne j$, is a \emph{boundary segment}  of $\calp $ if  $i$ and $j$ are neighbors on the boundary of $\cals$, and otherwise, we call $(i,j)$ an \emph{arc} in $\cals$. 
Furthermore, for each loop $(i,i)$, we define two arcs which we denote by $(i,i)^+$ and $(i,i)^-$. 

Every arc cuts the punctured polygon $\calp$ into two pieces, one of which is a polygon and the other a punctured polygon. 
An arc $(i,j)$ is called a \emph{2-arc} if the polygon piece obtained by cutting along $(i,j)$ has an even number of vertices.
\footnote{Note that since $N$ is odd, the punctured polygon piece will then have an odd number of vertices.}
Here we agree that the arcs $(i,i)^\pm$ are 2-arcs because the resulting polygon has $N+1$ vertices since the vertex $i$ splits into two vertices. 
Examples of 2-arcs are given in Figure~\ref{fig 2arcs}.
\begin{figure}
\begin{center}
\scalebox{0.8}{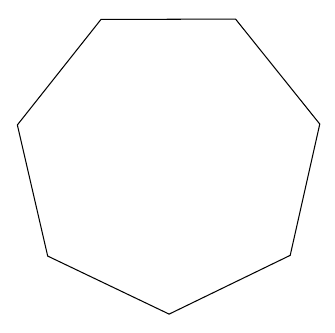}
\caption{Examples of 2-arcs in a punctured polygon $\calp$}
\label{fig 2arcs}
\end{center}
\end{figure}

For $i\ne j$, we can reformulate the condition by saying that an arc $(i,j)$ is a 2-arc if and only if the residue class of $j-i \pmod N$ is odd. For example the arc $(6,2)$ in Figure~\ref{fig 2arcs} is a 2-arc since $2-6=-4\equiv 3 \pmod 7$ is odd.
On the other hand, the curve $(2,6$) is not a 2-arc.

The 2-arcs are the indecomposable objects in the category $\arcp$. The irreducible morphisms are given by the 2-pivots defined below. 

A 2-arc $(i,j)$ admits the following \emph{2-pivots}.
\[\small
\begin{array}{ccccccc}
  \xymatrix@R5pt@C5pt{ &(i,j+2)\\ (i,j)\ar[ru]\ar[rd] \\&(i+2,j)} 
 & &
  \xymatrix@R5pt@C5pt{ &(i,i+5)\\ (i,i+3)\ar[ru]} 
  &&
    \xymatrix@R5pt@C5pt{ &(i,i)^+\\ (i,i-2)\ar[ru]\ar[rd]\ar[r] &(i,i)^-\\&(i+2,i-2)}
&&
\xymatrix@R5pt@C7pt{(i,i)^\pm\ar[r]&(i+2,i)}\\ \\
  \textup{if $j\ne i-2,i, i+3$ } & &\textup{if $j=i+3$}
  & &\textup{if $j=i-2$}& &\textup{if $j=i$}\\   \\
\end{array}
\]
The addition in the coordinates is  modulo $N$.
Thus a 2-pivot fixes one endpoint of the 2-arc and moves the other endpoint two positions further along the boundary in clockwise direction.

The Auslander-Reiten translation $\tau$ of a 2-arc $(i,j)$ is given by
$\tau(i,j)=(i-2,j-2)$ if $i\ne j$, and $\tau(i,i)^+=(i-2,i-2)^+$, $\tau(i,i)^-=(i-2,i-2)^-$. Hence $\tau$ is given geometrically as a rotation by $4\pi/N$. In particular, $\tau $ has period $N$, because $N$ is odd.

Similarly to type $\mathbb{A}$, this data defines a translation quiver $\zG_{\arcp}$ and the category $\arcp$ is defined as its mesh category,  see the bottom picture in Figure~\ref{fig AR 2cluster BM} for the case $N=7$.
\begin{thm}
 \label{thm BM2} \cite{BM2} There is an equivalence of categories 
 \[\BM_\mathbb{D}\colon \arc (\calp_N) \to \calc^2_{\mathbb{D}_{(N+1)/2}}.\] 
 This equivalence induces an isomorphism from the quiver $\zG_{\arcp}$ to the  Auslander-Reiten quiver of the 2-cluster category.
\end{thm}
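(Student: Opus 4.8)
The plan is to establish both assertions at once by realizing $\arc(\calp_N)$ and $\calc^2_{\mathbb{D}_{(N+1)/2}}$ as the mesh category of one and the same translation quiver. Concretely I would (i) identify the Auslander--Reiten quiver of $\calc^2_{\mathbb{D}_{(N+1)/2}}$ with an explicit quotient of $\mathbb{Z}\mathbb{D}_{(N+1)/2}$; (ii) construct an isomorphism of translation quivers from that Auslander--Reiten quiver onto $\zG_{\arcp}$; and (iii) use standardness to upgrade this combinatorial isomorphism to the asserted equivalence $\BM_{\mathbb{D}}$, which then automatically carries $\zG_{\arcp}$ onto the Auslander--Reiten quiver.

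For (i), let $H$ be hereditary of type $\mathbb{D}_n$ with $n=(N+1)/2$, so that the Auslander--Reiten quiver of $\cald^b(\textup{mod}\,H)$ is $\mathbb{Z}\mathbb{D}_n$. It is classical (see e.g.\ \cite{H}) that on $\mathbb{Z}\mathbb{D}_n$ the suspension $[1]$ acts as $\tau^{-(n-1)}$ composed with a graph automorphism $\varphi$ satisfying $\varphi^2=\id$, where $h=2n-2$ is the Coxeter number; hence $[2]=\tau^{-h}$ and $F:=\tau^{-1}[2]=\tau^{-(h+1)}$. Since $h+1=2n-1=N$, we get $F=\tau^{-N}$. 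As $N$ is odd, $\tau$ has infinite order on the finitely many indecomposables of $\cald^b(\textup{mod}\,H)$, the orbit category $\calc^2_{\mathbb{D}_n}=\cald^b(\textup{mod}\,H)/F$ is well defined, and its Auslander--Reiten quiver is the stable translation quiver $\mathbb{Z}\mathbb{D}_n/\langle\tau^N\rangle$, with $nN=N(N+1)/2$ vertices; the induced translation $\tau$ on the orbit category has order exactly $N$.

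Step (ii) is the combinatorial heart. I would match the two translation quivers family by family: the ``generic'' $2$-arcs $(i,j)$ with $i\neq j$ and $j\notin\{i-2,i,i+3\}$, together with their $2$-pivots, reproduce the part of $\mathbb{Z}\mathbb{D}_n$ lying over the $\mathbb{A}_{n-2}$-subdiagram; the loops $(i,i)^{+}$ and $(i,i)^{-}$ furnish the extremal vertices of the two branch arms of $\mathbb{D}_n$; and the short arcs $(i,i-2)$, $(i+2,i-2)$, $(i,i+3)$ supply the trivalent vertices. Using the explicit list of $2$-pivots in the statement one checks that this assignment is a morphism of quivers, that it intertwines $\tau(i,j)=(i-2,j-2)$ and $\tau(i,i)^{\pm}=(i-2,i-2)^{\pm}$ with the translation of $\mathbb{Z}\mathbb{D}_n$, and that (because $N$ is odd) this $\tau$ has order exactly $N$ on $2$-arcs, so one lands precisely in $\mathbb{Z}\mathbb{D}_n/\langle\tau^N\rangle$. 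A cleaner alternative for this step is to pass through the type $\mathbb{A}$ model of Theorem~\ref{thm BM1}: the orientation double cover of the punctured $N$-gon unfolds to the $2N$-gon $\cals_{2N}$ with its $\pi$-rotation, under which a $2$-arc lifts either to a rotation-invariant $2$-diagonal or to an antipodal pair of $2$-diagonals, and a loop $(i,i)$ lifts to the ``long'' diagonal through the centre; transporting the translation-quiver structure of $\diag(\cals_{2N})$ along this unfolding and checking compatibility along the ramification locus yields the same identification.

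Finally, for (iii): $\cald^b(\textup{mod}\,H)$ is standard for $H$ of Dynkin type, i.e.\ it coincides with the mesh category of $\mathbb{Z}\mathbb{D}_n$, and since $F$ acts freely and admissibly the orbit category $\calc^2_{\mathbb{D}_n}$ is standard as well, hence equal to the mesh category of $\mathbb{Z}\mathbb{D}_n/\langle\tau^N\rangle$ (cf.\ \cite{Ri}). On the other hand $\arc(\calp_N)$ is \emph{by definition} the mesh category of $\zG_{\arcp}$. The translation-quiver isomorphism of (ii) therefore induces an equivalence $\BM_{\mathbb{D}}\colon\arc(\calp_N)\to\calc^2_{\mathbb{D}_{(N+1)/2}}$ sending $\zG_{\arcp}$ isomorphically onto the Auslander--Reiten quiver, as claimed. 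I expect the real difficulty to lie in step (ii), specifically in the behaviour at the puncture: one must verify that the three special families of $2$-pivots (those with $j=i-2$, with $j=i+3$, and with $j=i$ together with the ``$\pm$'' splitting of loops) fit together to form exactly the two short arms and the trivalent vertices of $\mathbb{Z}\mathbb{D}_n/\langle\tau^N\rangle$, with the correct mesh relations there, and keeping the $\pm$-labelling of loops consistent with the two arms is the subtle point. A minor but necessary technical check is that the graph automorphism $\varphi$ entering $[1]$ really squares to the identity, so that $F=\tau^{-N}$ exactly with no automorphism twist; this is confirmed case by case on $\mathbb{D}_n$ ($\varphi=\id$ for $n$ even, $\varphi$ the flip of the two fork tips for $n$ odd).
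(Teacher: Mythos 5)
Theorem~\ref{thm BM2} is not proved in this paper at all; it is quoted from Baur and Marsh \cite{BM2}, so there is no internal argument here to compare your attempt against. Judged on its own terms, your outline is essentially the correct one and coincides in structure with the actual Baur--Marsh argument: one shows that $\arc(\calp_N)$ (by its very definition) and $\calc^2_{\mathbb{D}_{(N+1)/2}}$ (by standardness of the Dynkin derived category together with admissibility of the orbit functor) are both the mesh category of the same stable translation quiver $\mathbb{Z}\mathbb{D}_n/\langle\tau^N\rangle$ with $n=(N+1)/2$.

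Your Coxeter-number computation in step (i) is correct, and there is a slightly cleaner route that also resolves the ``automorphism-twist'' worry you flag at the end. The bounded derived category of a hereditary algebra of Dynkin type $\Delta$ with Coxeter number $h$ is fractionally Calabi--Yau with $S^h\cong[h-2]$, where $S=\tau[1]$ is the Serre functor; this forces $\tau^h\cong[-2]$, i.e.\ $[2]\cong\tau^{-h}$ with no graph-automorphism factor in any Dynkin type, and hence $\tau^{-1}[2]\cong\tau^{-(h+1)}$. For $\mathbb{D}_n$ one has $h=2n-2$, so $h+1=2n-1=N$, giving $F=\tau^{-N}$ exactly. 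The automorphism $\varphi$ only intervenes in the odd shift $[1]$; since $\varphi^2=\id$ it vanishes from $[2]$, which is what your case-check confirms.

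Step (ii) is indeed where the real content lies, and your family-by-family matching (loops $(i,i)^\pm$ giving the two fork vertices, the arcs $(i,i-2)$ giving the trivalent vertex and the arcs $(i,i+3)$ its neighbour on the long arm, generic arcs filling out the tail) is the right picture; one then verifies mesh by mesh that the $2$-pivots reproduce the arrows of $\mathbb{Z}\mathbb{D}_n/\langle\tau^N\rangle$. Your alternative route via the $\pi$-rotation double cover $\cals_{2N}\to\calp_N$ is a nice observation: although it is not how Baur--Marsh prove this theorem, it is precisely the covering geometry this paper exploits in sections~4--6 to transport the type $\mathbb{A}$ model to type $\mathbb{D}$ through the skew group algebra, so it aligns with the rest of the present work. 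Either way the argument closes correctly.
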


\subsection{Skew group algebras}
In this section, we recall results on skew group algebras from \cite{RR}. Here we shall focus on the case where the group has order two. For further results on the order two case see \cite{AP}.
All modules will be finitely generated.

Let $A$ be a finite dimensional basic algebra over an algebraically closed field $\kb$ of characteristic different from 2. Let $Q$ be the ordinary quiver of $A$ and $I$ an admissible ideal such that $A\cong \kb Q/I$.  

Let $G=\{1,\zs\}$ be the group of order 2 and 
assume that $A$ admits a $G$-action  
such that  $\zs$  acts as an automorphism of $A$.
\begin{definition} We say the $G$-action on $A$ is \emph{admissible} if
\begin{enumerate}
\item  $\zs$ maps vertices to vertices and arrows to arrows, and 
\item $\zs$ fixes at least one vertex.
\end{enumerate}
\end{definition}
In this paper we will be only interested in admissible $G$-actions. For group actions that do not require this condition, we refer to \cite{RR}.

Let $AG$ be the skew group algebra, thus 
\[AG=A\otimes_\kb \kb G\] as $\kb$-vector spaces, and the multiplication is given on simple tensors by the formula
\[(a\otimes g) (a'\otimes g') 
= ag(a')\otimes gg'\]
for all $a,a'\in A, \ g,g'\in G$,
and extended to all of $AG$ by distributivity. For example
\[
\begin{array}{rcl}
(a\otimes(g_1+g_2))(a'\otimes g')
&=& (a\otimes g_1)(a'\otimes g') +(a\otimes g_2)(a'\otimes g')\\
&=& ag_1(a')\otimes g_1g' + ag_2(a')\otimes g_2g' 
\end{array}
\]

By definition, the dimension of $AG$ is twice the dimension of $A$.

\begin{remark}
 In general $AG$ is  not a basic algebra. 
\end{remark}

\subsubsection{$G$-action as automorphisms of $\textup{mod}\,A$ and of $\scat $}\label{Gmod}
 If $M$ is an $A$-module, define the module $\zs M$ to be the same vector space as $M$ but with twisted $\zs$-action given by 
$m\cdot_\zs a = m\cdot(\zs a)$, for $m\in M, a\in A$, where the right hand side is the action of $\zs a$ on $M$. If $f\colon M\to N$ is a morphism in $\textup{mod}\,A$, define $\zs f\colon \zs M\to \zs N$ by $(\zs f)(m)=f(m)$. The following computation shows that $\zs f$ is a morphism in $\textup{mod}\,A$.
\[(\zs f)(m\cdot_\zs a)= (\zs f)(m\cdot (\zs a)) = f(m\cdot (\zs a)) =
f(m) \cdot (\zs a) =(\zs f)(m)\cdot_\zs a \]
This defines a functor $\zs\colon \textup{mod}\,A\to \textup{mod}\,A$.

A \emph{syzygy} is a submodule of a projective module. If $M\in \textup{mod}\,A$, its syzygy $\zO M$ is defined to be the kernel of a projective cover of $M$. We thus have a short exact sequence
\begin{equation}
 \label{eq ses}
\xymatrix{0\ar[r]&\zO M\ar[r] &P\ar[r]^\pi&M\ar[r]&0}
\end{equation}
where $\pi$ is a projective cover. Every  non-projective syzygy  is of the form $\zO M$ for some $M\in \textup{mod}\,A$. 
 
\begin{lemma}\label{lem Gmod}

(a) The functor $\zs$ is an automorphism of $\textup{mod}\,A$ of order two. In particular $\zs$ is an exact functor.

(b) $\zs P(i)=P(\zs i)$, $\zs I(i)=I(\zs i)$, and $\zs S(i)=S(\zs i)$. 

(c) The restriction of $\zs$ to the  syzygy category and to the stable syzygy category are automorphisms.

(d) $\zs\, \rad\, P(i) =\rad\,P(\zs i)$.

\end{lemma}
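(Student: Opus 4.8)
The plan is to verify the four parts more or less directly, leaning on the explicit description of the twisting functor $\zs$ given just before the statement. For part (a), the only substantive point is that $\zs$ is an automorphism of order two; exactness is then automatic since an equivalence of abelian categories is exact. To see $\zs^2\cong\id$, I would compute that $\zs(\zs M)$ has underlying vector space $M$ with action $m\cdot_{\zs\zs}a = m\cdot_\zs(\zs a) = m\cdot(\zs^2 a)= m\cdot a$, using that $\zs$ has order two as an automorphism of $A$; similarly $\zs(\zs f)=f$ on morphisms. Functoriality of $\zs$ was already checked in the text, and the inverse is $\zs$ itself, so $\zs$ is an automorphism. I would also remark that $\zs$ sends the regular module $A$ to $A$ (as a twist of the regular representation by an algebra automorphism), which is what makes the later parts work.

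For part (b), the cleanest route is to observe that $\zs$ permutes the complete set of primitive orthogonal idempotents $\{e_1,\dots,e_n\}$ according to the vertex permutation, since $\zs$ is an algebra automorphism sending vertices to vertices. Then $\zs(e_iA)$ is the twist of the indecomposable projective $P(i)=e_iA$, and one checks directly from the definition of the twisted action that $e_iA$ with twisted $\zs$-action is isomorphic to $e_{\zs i}A=P(\zs i)$ via the map induced by $a\mapsto \zs(a)$ (which carries $e_iA$ to $e_{\zs i}A$ because $\zs(e_i)=e_{\zs i}$). Dually, or by applying the duality $D$ and using that $\zs$ commutes with $D$ up to the same vertex permutation, one gets $\zs I(i)=I(\zs i)$; and $\zs S(i)=S(\zs i)$ follows since $S(i)=\textup{top}\,P(i)$ and $\zs$ is exact hence commutes with taking tops (it preserves radicals, see part (d)).

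Part (c): by part (b), $\zs$ sends projective modules to projective modules, so it preserves projective covers; applying the exact functor $\zs$ to the short exact sequence (\ref{eq ses}) and using that $\zs\pi\colon \zs P\to \zs M$ is again a projective cover shows $\zs(\zO M)\cong \zO(\zs M)$. Hence $\zs$ restricts to an automorphism of the syzygy category. Since $\zs$ also preserves the projective objects, it descends to the stable syzygy category, where it is again an automorphism of order two by part (a). For part (d), $\rad\,P(i)=P(i)(\rad\,A)$, and since $\zs$ is an algebra automorphism it fixes the Jacobson radical setwise, so a direct computation with the twisted action gives $\zs(P(i)(\rad\,A)) = P(\zs i)(\rad\,A)=\rad\,P(\zs i)$ (alternatively, $\zs$ exact and $\rad\,P(i)$ the unique maximal submodule of $P(i)$, combined with $\zs P(i)=P(\zs i)$).

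None of the steps is genuinely hard; the main thing to get right is the bookkeeping of the twisted action in part (b), i.e.\ confirming that the natural candidate isomorphism $\zs P(i)\to P(\zs i)$ really is $A$-linear for the twisted module structure, since that is the fact that all of (c) and (d) ultimately rest on.
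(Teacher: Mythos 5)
Your proof is correct and follows essentially the same direct-computation approach as the paper's. The one structural difference is that you deduce $\zs S(i)=S(\zs i)$ from part (d), whereas the paper computes the action of $e_j$ on $\zs S(i)$ directly in (b) and then uses the simple-module identity to prove (d); since your proof of (d) relies only on the projective case of (b) (or on $\zs$ preserving $\rad A$ as an algebra automorphism), there is no circularity, just a mild reordering of the argument.
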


\begin{proof}
Clearly $\zs^2$ is the identity. So $\zs$ is an automorphism of order 2.  Every equivalence between abelian categories is exact. This proves (a). 
 
(b) For  a path $w$ in $A$, we have $e_i \cdot_\zs w = e_i  \zs(w) $. The last expression is nonzero if and only if $\zs(w)$ starts at vertex $i$ which holds if and only if $w$ starts at $\zs(i)$. Thus $\zs P(i)=P(\zs i)$.  The equation for the injective modules is the dual statement. 
To show the equation for the simple modules let, $m\in \zs S(i)$ and consider the action of a constant path $e_j$ on $m$. We have $m\cdot_\zs e_j = m\cdot e_{\zs j}$ which is nonzero if and only if $\zs j =i$ which holds if and only if $j=\zs i$. Thus $\zs S(i)=S(\zs i)$. 

(c) From (b) we know that $\zs$ maps projective syzygies to projective syzygies. Now let $M$ be a non-projective syzygy and let $P$ be a projective module containing $M$ as a submodule. By part~(a), $\zs M$ is a submodule of $\zs P$, and by part (b), $\zs P$ is projective. Thus $\zs M$ is a syzygy. Moreover $\zs M$ is non-projective, because otherwise $M=\zs\zs M$ would be projective as well.

(d)  Because of (a) and (b), the short exact sequence 
\[\xymatrix{0\ar[r]&\rad \, P(i) \ar[r]& P(i)\ar[r]& S(i)\ar[r]&0}\]
 is mapped by $\zs $ to the short exact sequence
\[\xymatrix{0\ar[r]&\zs\,\rad \, P(i) \ar[r]& P(\zs i)\ar[r]& S(\zs i)\ar[r]&0}.\]
Thus  $\zs\, \rad\, P(i) =\rad\,P(\zs i)$.
\end{proof}

\subsubsection{Induction and restriction functors}

Let $F\colon \textup{mod}\,A\to \textup{mod}\,AG$ denote the induction functor, thus $F=-\otimes_A AG$, and let $H\colon\textup{mod}\,AG\to \textup{mod}\,A$ denote the restriction functor. Thus if $N$ is an $AG$-module then $H(N)=N$ as $\kb$-vector spaces, but we restrict the scalars from $AG$ to $A$. If $f$ is a morphism in $\textup{mod}\,AG$ then $H(f)=f$. 
Both $F$ and $H$ are exact functors and map projective modules to projective modules.

The following is shown in \cite[Proposition 1.8]{RR} in a more general setting. 

 \begin{prop}
  \label{prop RR} Let $X,Y$ be indecomposable $A$-modules.
  
  (a) $HF(X)\cong X\oplus \zs X$.
  
  (b) $F(X)\cong F(Y) $ if and only if $X\cong Y$ or $X\cong \zs Y$.
  
  (c) If $X\cong \zs X$ then $F(X) $ has exactly two indecomposable summands. 
  
  (d) If $X\not\cong \zs X$ then $F(X)$ is indecomposable. 
\end{prop}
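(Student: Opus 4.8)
The plan is to prove Proposition~\ref{prop RR} by combining the adjunction properties of the pair $(F,H)$ with the structure of $\End_{AG}(F(X))$ as computed via the restriction functor. First I would record the two basic facts that make everything work: that $H$ and $F$ are exact and preserve projectives (already noted), and that there is a natural isomorphism
\[\Hom_{AG}(F(X),F(Y))\cong \Hom_A(X,HF(Y))\cong \Hom_A(X,Y\oplus\zs Y)\]
coming from the adjunction $F\dashv H$ together with part~(a). So the whole proposition reduces to understanding $\Hom_A(X,Y)\oplus\Hom_A(X,\zs Y)$ and, in particular, the algebra $\End_{AG}(F(X))$.

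For part~(a), I would give the direct computation: as an $A$-module, $AG = A\otimes_\kb\kb G$ is free of rank $2$ over $A$ via the two ``columns'' $A\otimes 1$ and $A\otimes\zs$, but the right $A$-action on the second column is twisted by $\zs$ (because $(a'\otimes\zs)(a\otimes 1) = a'\zs(a)\otimes\zs$). Hence $HF(X) = X\otimes_A AG \cong (X\otimes 1)\oplus(X\otimes\zs)$ where the first summand is $X$ and the second is $\zs X$ by the very definition of the twisted module in Section~\ref{Gmod}. This is the one genuinely computational step, but it is short.

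With (a) in hand, parts~(b)--(d) are representation-theoretic bookkeeping. For~(b): if $X\cong Y$ or $X\cong\zs Y$, then $\zs X\cong\zs Y$ or $\zs X\cong Y$ respectively (using $\zs^2=\id$ from Lemma~\ref{lem Gmod}(a)), so $HF(X)\cong X\oplus\zs X\cong Y\oplus\zs Y\cong HF(Y)$, and I need to upgrade this to $F(X)\cong F(Y)$; the cleanest route is to note that the nonzero components of $\Hom_A(X,Y\oplus\zs Y)$ guaranteed by the hypothesis supply, via the adjunction isomorphism above, a morphism $F(X)\to F(Y)$ which one checks (again using~(a) and Krull--Schmidt) is an isomorphism by comparing it after applying $H$. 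Conversely, if $F(X)\cong F(Y)$, apply $H$ and use Krull--Schmidt on $X\oplus\zs X\cong Y\oplus\zs Y$ to conclude $X\cong Y$ or $X\cong\zs Y$. For~(c) and~(d): I would analyze $E:=\End_{AG}(F(X))$. By the adjunction and~(a), $E\cong\Hom_A(X,X\oplus\zs X)=\End_A(X)\oplus\Hom_A(X,\zs X)$ as vector spaces; since $X$ is indecomposable, $\End_A(X)$ is local. One then shows that the number of indecomposable summands of $F(X)$ equals the number of primitive idempotents in $E/\rad E$, and distinguishes the two cases: if $X\not\cong\zs X$ then $\Hom_A(X,\zs X)\subseteq\rad E$ and $E/\rad E$ is a division ring, so $F(X)$ is indecomposable (part~(d)); if $X\cong\zs X$ then $\Hom_A(X,\zs X)\cong\End_A(X)$ and one computes that $E$ is (Morita equivalent to) $\End_A(X)\otimes_\kb\kb G$, whose semisimple quotient splits into exactly two factors because $\mathrm{char}\,\kb\ne 2$ makes $\kb G$ semisimple of dimension $2$ — giving exactly two summands (part~(c)), which moreover must be non-isomorphic by~(b). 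The main obstacle I anticipate is the idempotent-lifting/endomorphism-algebra argument in~(c): one must carefully identify $E$ with a skew group algebra of the (possibly non-trivial) local ring $\End_A(X)$ equipped with an order-two automorphism, and verify that the characteristic hypothesis forces the split into two blocks; the rest is formal once the adjunction isomorphism and part~(a) are established. As noted, all of this is carried out in \cite[Proposition 1.8]{RR} in greater generality, so one option is simply to cite it, but the above is the self-contained argument in the order-two case.
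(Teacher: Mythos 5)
The paper itself gives no proof here; it simply cites \cite[Proposition~1.8]{RR}. Your sketch is essentially the Reiten--Riedtmann argument specialized to $|G|=2$, and it is sound in outline. Part~(a) is exactly the right computation: $AG=(A\otimes 1)\oplus(A\otimes\sigma)$ as an $A$--$A$-bimodule, with the second summand carrying the $\sigma$-twisted right $A$-action, so $X\otimes_A AG\cong X\oplus\sigma X$. For~(b) the converse (apply $H$, then Krull--Schmidt) is clean; for the forward direction the cleanest route is to observe directly that $F\circ\sigma\cong F$ as functors (the map $m\otimes(a\otimes g)\mapsto m\otimes(\sigma a\otimes\sigma g)$ is a natural $AG$-isomorphism $F(\sigma M)\to F(M)$), so $X\cong\sigma Y$ gives $F(X)\cong F(\sigma Y)\cong F(Y)$; your ``lift a morphism by adjunction and check it becomes an isomorphism after $H$'' route is workable since $H$ reflects isomorphisms, but it is more awkward.

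The part you flag as the main obstacle is indeed the only real gap, and it is present in~(d) as well as~(c): the adjunction identifies $E:=\End_{AG}(F(X))$ with $\Hom_A(X,X\oplus\sigma X)$ only as a $\kb$-vector space, and to locate $\rad E$ you need the ring structure. The transparent way to get it is to view $F(X)=X_0\oplus X_1$ as a $\mathbb{Z}/2$-graded $A$-module with $X_0\cong X$, $X_1\cong\sigma X$, which exhibits $E$ as a $\mathbb{Z}/2$-graded subring of $\End_A(X\oplus\sigma X)$ with $E_0\cong\End_A(X)$ and $E_1\cong\Hom_A(\sigma X,X)$. For~(d): when $X\not\cong\sigma X$, one checks $\rad E_0\oplus E_1=E\cap\rad\End_A(X\oplus\sigma X)$, a nilpotent ideal with quotient $\End_A(X)/\rad\cong\kb$, so $E$ is local. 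For~(c): when $X\cong\sigma X$, $E_1$ contains a unit and $E$ is a crossed product of the local ring $\End_A(X)$ by $\mathbb{Z}/2$ --- not literally, nor even obviously Morita equivalent to, $\End_A(X)\otimes_\kb\kb G$ as you assert, since the induced automorphism and $2$-cocycle need not be trivial. What does hold, and suffices, is that $E/\rad E$ is a $2$-dimensional $\mathbb{Z}/2$-graded $\kb$-algebra with an invertible degree-$1$ element $\bar u$; since $\kb$ is algebraically closed one may rescale so that $\bar u^2=1$, and then $\mathrm{char}\,\kb\ne 2$ forces $E/\rad E\cong\kb[t]/(t^2-1)\cong\kb\times\kb$, giving exactly two primitive idempotents. (The upper bound ``at most two summands'' in~(c) also follows immediately from~(a) and faithfulness of $H$: $HF(X)\cong X\oplus X$ has exactly two indecomposable summands and $H$ annihilates no nonzero summand.) So the approach is correct and matches the cited source; the one inaccuracy is the identification of $E$ in~(c), and the one unacknowledged gap is that~(d) also needs the ring (not just vector-space) structure of $E$.
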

We also need the next result.
\begin{prop}
 \cite[Theorem 3.8]{RR} \label{prop RR2}
(a)  The functor $F$  maps an almost split sequence in $\textup{mod}\,A$  to a direct sum of almost split sequences in $\textup{mod}\,AG$.

(b)  The functor $H$  maps an almost split sequence in $\textup{mod}\,AG$  to a direct sum of almost split sequences in $\textup{mod}\,A$.
\end{prop}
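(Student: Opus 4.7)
\medskip

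\noindent\textbf{Proof plan.} Since this is attributed to \cite[Theorem 3.8]{RR}, I will sketch the structural argument. The main tool is the adjunction between the induction and restriction functors. Because $AG$ is free of rank two as a (left and right) $A$-module, $F$ is both a left and a right adjoint of $H$; in particular, for every $V\in \textup{mod}\,A$ and $W\in \textup{mod}\,AG$ we have
\[
\Hom_{AG}(F(V),W)\cong\Hom_A(V,H(W)),\qquad \Hom_A(H(W),V)\cong\Hom_{AG}(W,F(V)).
\]
Combined with Proposition~\ref{prop RR} (in particular $HF(X)\cong X\oplus\zs X$), this is what makes the whole argument go.

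For part (a), start with an almost split sequence $\eta\colon 0\to X\to Y\to Z\to 0$ in $\textup{mod}\,A$. Since $F$ is exact, $F(\eta)$ is a short exact sequence in $\textup{mod}\,AG$. First show $F(\eta)$ does not split: if it did, applying $H$ would produce a split sequence whose underlying $A$-module sequence contains $\eta$ as a direct summand (because of the inclusion $X\hookrightarrow X\oplus\zs X\cong HF(X)$ and the analogous inclusions for $Y$ and $Z$, all compatible by naturality), contradicting the almost splitness of $\eta$. Next, using Krull--Schmidt in $\textup{mod}\,AG$, decompose $F(Z)=\bigoplus_{i=1}^k Z_i$ into indecomposables and let $\eta_i$ be the pullback of $F(\eta)$ along the inclusion $Z_i\hookrightarrow F(Z)$. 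Each $\eta_i$ is a non-split short exact sequence ending at an indecomposable, so to conclude each is almost split it suffices to verify the factorization property: every non-split epimorphism $g\colon W\to Z_i$ with $W$ indecomposable factors through the middle term of $\eta_i$. Compose with the inclusion to get $g'\colon W\to F(Z)$ and transport along the adjunction to a morphism $\tilde g\colon H(W)\to Z$. Show that if $\tilde g$ were a split epi, then so would $g'$, and consequently $g$ would be a split epi (using Krull--Schmidt on $H(W)$ and the fact that $Z_i$ is a summand of $F$ of one of its indecomposable summands, via Proposition~\ref{prop RR}(b)). So $\tilde g$ is not a split epi, hence factors through $Y\to Z$ by almost splitness of $\eta$; transporting back via the adjunction produces the desired factorization through the middle term of $\eta_i$.

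For part (b), the argument is symmetric: take an almost split sequence $\zeta\colon 0\to X'\to Y'\to Z'\to 0$ in $\textup{mod}\,AG$, so that $H(\zeta)$ is exact in $\textup{mod}\,A$. Again the non-splitness of $H(\zeta)$ follows from the adjunction: a splitting of $H(\zeta)$ would induce, via the unit/counit of the $(F,H)$-adjunction, a splitting of $\zeta$. Decompose $H(Z')$ into indecomposables in $\textup{mod}\,A$ and argue componentwise as in (a), now converting a non-split epi $g\colon V\to H(Z')_j$ (with $V$ indecomposable in $\textup{mod}\,A$) into a non-split epi in $\textup{mod}\,AG$ using the adjunction isomorphism $\Hom_A(V,H(Z'))\cong\Hom_{AG}(F(V),Z')$ and Proposition~\ref{prop RR}(a),(c),(d).

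The main obstacle is verifying carefully that ``non-split epi'' is preserved in both directions of the adjunction; this is where one really uses that $F$ and $H$ are biadjoint (not merely one-sided adjoints) together with Proposition~\ref{prop RR}, which controls how indecomposables of $\textup{mod}\,A$ and $\textup{mod}\,AG$ are matched under $F$ and $H$.
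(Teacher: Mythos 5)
The paper itself gives no argument for this proposition --- it is quoted directly from \cite[Theorem 3.8]{RR} --- so what you are really doing is reconstructing Reiten--Riedtmann's proof. Your main tools are the right ones and match the spirit of theirs: exactness of $F$ and $H$, the biadjunction coming from the fact that $AG$ is free of rank $|G|$ over $A$ with $|G|$ invertible, and Proposition~\ref{prop RR}. Your non-splitness argument for $F(\eta)$ (that $\eta$ is a natural direct summand of $HF(\eta)$ via the split unit) and your mechanism for transporting the factorization property across the adjunction are both sound.

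The componentwise part, however, has a genuine gap. First, you assert that each pullback $\eta_i$ of $F(\eta)$ along $Z_i\hookrightarrow F(Z)$ is non-split, but what you established is only that $F(\eta)$ is non-split; this gives a nonzero component in $\Ext^1_{AG}(Z_i,F(X))$ for \emph{some} $i$, not for every $i$, so this needs a separate argument (e.g.\ by applying $H$ and using $HF(\eta)\cong\eta\oplus\zs\eta$). Second, and more seriously, your sufficiency criterion is false: a non-split exact sequence ending at an indecomposable whose epimorphism satisfies the right almost split factorization property need not be almost split --- it is an almost split sequence plus a split summand of the form $0\to U\to U\to 0\to 0$, and it is almost split only when its \emph{left} end is indecomposable. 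In your situation this is not a technicality: the left term of your $\eta_i$ is all of $F(X)$, and in the essential case $Z\cong\zs Z$ (equivalently $X=\tau Z\cong\zs X$) Proposition~\ref{prop RR}(c) says $F(X)$ has two indecomposable summands, so $\eta_i$ cannot be almost split as claimed. Your sketch never touches the left-hand ends at all: there is no dual factorization argument for maps out of $F(X)$ (using the other adjunction), no identification $F(X)\cong\tau_{AG}F(Z)$ (i.e.\ compatibility of $F$ with the Auslander--Reiten translation), and no minimality argument, and without one of these inputs the stated steps do not yield that $F(\eta)$ decomposes as a direct sum of almost split sequences (rather than, say, a sequence with extra split pieces or nontrivial cross-terms between the two summands of $F(X)$ and $F(Z)$). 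The same omission affects your outline of part (b).
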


The action of the functors $F$ and $H$ on syzygies is described in the following result.
\begin{prop}
 \label{prop Fsyz}
 {\rm (a)} The functors $F,H$ map syzygies to syzygies.
%

{\rm (b)} 
Let $\zO_A$ and $ \zO_{AG}$ denote the syzygy functors over $A$ and $AG$ respectively. Then
\[ \zO_{AG} F = F\zO_A \quad\textup{and}\quad \zO_AH=H\zO_{AG}.\]
\end{prop}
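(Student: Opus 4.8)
The plan is to prove part (a) first, then deduce (b) by a diagram-chasing argument using projective covers. For part (a), the key observation is that both $F$ and $H$ are exact and send projective modules to projective modules (as recorded just before Proposition~\ref{prop RR}). If $M$ is a syzygy, then by definition $M \subseteq P$ for some projective $A$-module (resp.\ $AG$-module) $P$. Applying the exact functor ($F$ or $H$) to the inclusion $0 \to M \to P$ gives an inclusion of the image of $M$ into a projective module, so the image is again a syzygy. This settles (a). (One should note that $F$ and $H$ take projective covers to projective covers only up to summing; but for (a) we do not need a projective \emph{cover}, just an embedding into \emph{some} projective, so the argument is clean.)

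For part (b), I would start from a projective cover $\pi\colon P \to M$ over $A$ with kernel $\zO_A M$, giving the short exact sequence $0 \to \zO_A M \to P \xrightarrow{\pi} M \to 0$ as in \eqref{eq ses}. Applying the exact functor $F$ yields a short exact sequence $0 \to F\zO_A M \to FP \xrightarrow{F\pi} FM \to 0$ with $FP$ projective. The point to check is that $F\pi$ is again a \emph{projective cover}, i.e.\ that $\ker(F\pi) = F\zO_A M$ is contained in $\rad(FP)$, equivalently that $F$ takes superfluous submodules to superfluous submodules; this follows because $F$ is exact, preserves projectives, and is right adjoint (in fact also left adjoint, by \cite{RR}) to the exact functor $H$, so it preserves the radical: $\rad(FP) = F(\rad P)$ since $\rad$ of a projective is the trace of all other simples / can be computed via the standard identity $\rad(\zL\text{-mod}) $ compatible with the (bi)module $AG_A$ being projective on both sides. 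Hence $F\zO_A M = \zO_{AG}(FM)$ up to the projective-cover normalization, i.e.\ $\zO_{AG} F \cong F \zO_A$ as functors on the module category (and a fortiori on the stable/syzygy category, where the choice of projective cover is irrelevant). The identity $\zO_A H \cong H \zO_{AG}$ is proved symmetrically, starting from a projective cover over $AG$ and applying the exact, projective-preserving, radical-preserving functor $H$; here one uses that $H$ also admits $F$ as a two-sided adjoint, so $\rad(HQ) = H(\rad Q)$ for $Q$ projective over $AG$.

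The main obstacle I anticipate is precisely the verification that $F$ and $H$ preserve projective \emph{covers}, not merely projective modules — i.e.\ that they preserve the property of a submodule being superfluous (contained in the radical). The cleanest route is to show $\rad(FP) = F(\rad P)$ and $\rad(HQ) = H(\rad Q)$ for projectives $P,Q$. For $F = -\otimes_A AG$ this reduces to the fact that $AG$ is free of rank one as a left $A$-module (indeed $AG \cong A \oplus A$ as left $A$-modules, since $AG = A \otimes_\kb \kb G$), together with $\rad(AG) = \rad(A)\cdot G$, which can be read off the multiplication formula $(a \otimes g)(a' \otimes g') = a\,g(a') \otimes gg'$ using that $\zs$ fixes the arrow ideal as a set (admissibility). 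For $H$, the statement $\rad(HQ) = H(\rad Q)$ follows since $H$ just forgets scalars and $\rad(A) \subseteq \rad(AG)$ acts the same way. Once this lemma is in hand, part (b) is a formal consequence of exactness plus preservation of projective covers, and no further computation is needed. Alternatively — and perhaps more in the spirit of the paper — one can invoke Proposition~\ref{prop RR2}: since $F$ and $H$ commute with almost split sequences and with the duality-free structure, and since over a $2$-Calabi-Yau tilted algebra $\zO^2 \tau$ fixes the indecomposable non-projective syzygies (criterion (c) of \cite{GES} quoted in \S\ref{sect CM}), one can alternatively deduce the commutation with $\zO$ from the commutation with $\tau$ and the $2$-Calabi-Yau relation; but the direct projective-cover argument is shorter and does not require restricting to indecomposables.
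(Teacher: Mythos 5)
Your part (a) matches the paper's proof exactly. For part (b), however, you take a genuinely different route. The paper argues by contradiction: if $F(\pi)$ were not a projective cover, the short exact sequence $0 \to F(\zO_A M) \to F(P) \to F(M) \to 0$ would acquire a summand $0 \to P' \to P' \to 0 \to 0$ with $P'$ projective nonzero; applying $H$ and using $HF \cong \mathrm{id} \oplus \zs$ from Proposition~\ref{prop RR} would then split off an identity summand from $\pi \oplus \zs\pi$, contradicting that the latter is a projective cover. You instead prove directly that $F$ preserves projective covers, via the identity $\rad(FP) = F(\rad P)$. Both approaches work; the paper's stays entirely within the black-box formalism of Reiten--Riedtmann and never touches the radical, while yours is shorter once the radical identity is in hand.

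That said, your justification of $\rad(FP) = F(\rad P)$ has a soft spot. The assertion that $F$ preserves radicals because it is a two-sided adjoint of the exact functor $H$ is not a valid inference on its own: biadjointness plus exactness does not, in general, imply radical preservation. Likewise, ``read off the multiplication formula using that $\zs$ fixes the arrow ideal'' is too vague to carry the claim. The correct and standard justification is the Reiten--Riedtmann fact that $\rad(AG) = (\rad A)\otimes_\kb \kb G$ whenever $\mathrm{char}\,\kb \nmid |G|$ (which holds here since $|G|=2$ and $\mathrm{char}\,\kb \ne 2$): this follows because $\rad A$ is $G$-stable, so $(\rad A)G$ is a nilpotent two-sided ideal with semisimple quotient $(A/\rad A)G$. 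Given this, a direct computation shows $\rad F(P) = P \otimes_A \rad(AG) = (\rad P)\otimes_A AG = F(\rad P)$, and the rest of your argument goes through cleanly. So the key lemma you rely on is true and standard, but you should cite the Reiten--Riedtmann radical formula rather than appeal to adjunction.
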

\begin{proof}
 (a)  Let $M$ be a syzygy in $\textup{mod}\,A$. Then there exists a projective $A$-module $P$ that contains $M$ as a submodule. Since the functor $F$ is exact it follows that $F(M)$ is a submodule of $F(P)$. Moreover, since $F(P)$ is projective, $F(M)$ is a syzygy.
The proof for $H$ is analogous.

(b) Let $P$ be a projective module, then we have the first identity $\zO_{AG} F (P)= 0=F\zO_A (P)$.  Now, let $M$ be an indecomposable, non-projective syzygy in $\textup{mod}\,A$. Applying the exact functor $F$ to the short exact sequence (\ref{eq ses}) yields the short exact sequence
\[\xymatrix{0\ar[r]&F(\zO_A M)\ar[r] &F(P)\ar[r]^{F(\pi)}&F(M)\ar[r]&0}
.\]
  If $F(\pi)$ is not a projective cover then there exists  a direct summand $P'$ of $F(P)$ such that the short exact sequence $0\to P' \to P' \to 0\to 0$ is a summand of the short exact sequence above.  Then Proposition~\ref{prop RR} implies that the sequence~(\ref{eq ses}) also decomposes nontrivially, which is a contradiction to $\pi$ being a projective cover.
 It follows that  $F(\pi)$ is a projective cover and hence $F(\zO_A M)=\zO_{AG} F(M)$. This proves the first identity. 
 The identity that involves $H$ is proved analogously.
\end{proof}

\subsection{The basic algebra}
In \cite{AP}, Amiot and Plamondon specify a complete set of primitive orthogonal idempotents in $AG$ and explicitly construct an idempotent $\overline{e}$ such that $B=\overline{e}AG\overline{e}$ is a basic algebra. Furthermore, they provide a description of the quiver $Q_G$ of $B$ and an action of $G$ on $B$. 

If $A={\textup {Jac}}{(Q,W)}$ is the Jacobian algebra of a quiver $Q$ with potential $W$, they define a potential $W_G$ on $Q_G$ and prove the following. 
\begin{prop} \cite[Corollary 2.7]{AP}
 The algebra $B$ is the Jacobian algebra of the quiver $Q_G$ with potential $W_G$.
\end{prop}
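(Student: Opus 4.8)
\emph{Setup and strategy.} The plan is to produce an explicit surjective $\kb$-algebra homomorphism
\[\Phi\colon \widehat{\kb Q_G}\,/\,\overline{(\partial W_G)}\ \longrightarrow\ B=\overline{e}AG\overline{e}\]
and then prove it is bijective. First I would pin down, following Amiot--Plamondon, the precise form of the idempotent $\overline{e}$: since $\mathrm{char}\,\kb\neq 2$, at a $\sigma$-fixed vertex $i$ of $Q$ the idempotent $e_i\otimes 1$ splits as the sum of two primitive idempotents $e_i\otimes\tfrac{1+\sigma}{2}$ and $e_i\otimes\tfrac{1-\sigma}{2}$, whereas at a two-element orbit $\{i,\sigma i\}$ the idempotent $(e_i+e_{\sigma i})\otimes 1$ is, up to conjugacy, primitive; $\overline{e}$ is a sum of one representative from each conjugacy class. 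This determines $(Q_G)_0$: two vertices $i^{+},i^{-}$ for each $\sigma$-fixed vertex, one vertex for each two-orbit. A $\kb$-basis of $B$ is then obtained by sandwiching a path basis of $A$ (paths modulo $\partial W$), tensored with $G$, between the chosen idempotents; this simultaneously produces a spanning set of $\overline{e}\,(\rad AG)\,\overline{e}\,/\,\overline{e}\,(\rad AG)^2\,\overline{e}$ and hence describes $(Q_G)_1$: each $\sigma$-orbit of arrows of $Q$ contributes arrows of $Q_G$, their number and source/target pattern dictated by whether the endpoints are fixed or form an orbit, carrying the sign bookkeeping inherited from the $e_i\otimes\tfrac{1\pm\sigma}{2}$ splitting.

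\emph{Construction and well-definedness of $\Phi$.} Send each vertex idempotent of $Q_G$ to the corresponding $\overline{e}_v$, and each arrow $a$ of $Q_G$ to the element $\overline{e}_{s(a)}\,(\alpha\otimes g_a)\,\overline{e}_{t(a)}\in B$, where $\alpha$ is a chosen representative of the $\sigma$-orbit of arrows of $Q$ attached to $a$ and $g_a\in\{1,\sigma\}$ records the sign. Extending multiplicatively and continuously gives an algebra map $\widehat{\kb Q_G}\to B$; the content of this step is to check that $\Phi(\partial_a W_G)=0$ in $B$ for every arrow $a$ of $Q_G$. Because $\sigma$ permutes the cyclic terms of $W$ (up to the sign convention built into $W_G$), and because multiplication in $AG$ is $(x\otimes g)(x'\otimes g')=x\,g(x')\otimes gg'$, the cyclic derivative $\partial_a W_G$ unwinds into a $\kb$-combination of elements $\overline{e}\,(\partial_\beta W\otimes g)\,\overline{e}$ with $\beta$ running over the $\sigma$-orbit corresponding to $a$; each of these is already $0$ in $A$, hence in $B$. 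So $\Phi$ descends to $\textup{Jac}(Q_G,W_G)$.

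\emph{Surjectivity and injectivity.} Surjectivity follows because $B$ is generated over its vertex idempotents by $\overline{e}\,(\rad AG)\,\overline{e}$, and $\rad AG=(\rad A)\otimes\kb G$ is, modulo $(\rad AG)^2$, spanned by the arrow images listed above (the relations lying in the kernel). Injectivity is the heart of the matter: a priori $B$ could satisfy relations not forced by $\partial W_G$. The cleanest route is to lift the whole construction to Ginzburg dg algebras. The $G$-action on $A$ extends to a $G$-action on $\Gamma=\Gamma(Q,W)$, acting compatibly on the degree $0,-1,-2$ generators because the differential is assembled from $\partial W$ and the $\sigma$-(anti)invariance of $W$; one then checks that $\overline{e}\,(\Gamma G)\,\overline{e}$ is isomorphic, as a dg algebra, to $\Gamma(Q_G,W_G)$ --- and this is exactly where the precise choice of $W_G$ (signs at $\sigma$-fixed vertices, numerical coefficients) is forced. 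Applying $H^0$ and using $H^0(\Gamma G)\cong H^0(\Gamma)\,G=AG$, $H^0(\overline{e}\,\Gamma G\,\overline{e})=\overline{e}\,H^0(\Gamma G)\,\overline{e}=B$, and $H^0(\Gamma(Q_G,W_G))=\textup{Jac}(Q_G,W_G)$ yields $B\cong\textup{Jac}(Q_G,W_G)$. Alternatively one can stay non-derived and prove injectivity by a filtered argument: $\Phi$ respects the radical filtrations, and the induced map on associated graded pieces is injective by matching the bases built in the first step --- but tracking which paths in $Q_G$ hit a given basis element of $B$ is precisely the bookkeeping that the dg formulation handles automatically.

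\emph{Main obstacle.} The crux is the injectivity step, i.e.\ showing that $\overline{(\partial W_G)}$ is \emph{all} of the kernel of $\widehat{\kb Q_G}\to B$; this is why fixing $W_G$ correctly is not cosmetic, and why passing through the Ginzburg dg algebra --- where the skew group construction and the cyclic-derivative relations are structurally compatible --- is the natural way to make the argument airtight. A subsidiary, purely combinatorial obstacle is the full enumeration of $(Q_G)_0$ and $(Q_G)_1$ in every case (both endpoints $\sigma$-fixed / exactly one fixed / none, and arrows in a one- or two-element orbit), which must be carried out before $\Phi$ can even be written down.
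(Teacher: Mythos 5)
The paper does not prove this statement --- it is cited directly from Amiot and Plamondon, \cite[Corollary 2.7]{AP}, so there is no in-paper proof to compare against; your write-up should therefore be judged as a reconstruction of the cited argument. On that score it is a sound sketch but not a complete proof. The combinatorial setup (splitting $e_i\otimes\frac{1\pm\sigma}{2}$ at $\sigma$-fixed vertices, the resulting description of $(Q_G)_0$ and $(Q_G)_1$) is correct and matches the source, as is the surjectivity argument via radical generators. The genuine gap is that you never actually produce $W_G$: you assert its form is ``forced by the dg compatibility,'' but that is precisely the content that must be supplied and checked. In \cite{AP} this is handled by writing down an explicit formula for $W_G$ (with coefficients and signs depending on whether a cycle is fixed by $\sigma$) and tracking the relation ideal directly under the isomorphism of completed path algebras $\overline{e}(\kb Q\,G)\overline{e}\cong\kb Q_G$, showing that the closed two-sided ideal generated by $\{\partial_\alpha W\}$ maps onto the one generated by $\{\partial_a W_G\}$. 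Your Ginzburg dg algebra route is a legitimate alternative and is indeed how one would make the ``injectivity for free'' point rigorous, but establishing the dg isomorphism $\overline{e}\,\Gamma(Q,W)\,G\,\overline{e}\cong\Gamma(Q_G,W_G)$ requires the very same coefficient bookkeeping for $W_G$ that you hoped to sidestep. So the obstacle you correctly identify at the end is not bypassed by the dg formulation, only reorganized, and any complete proof must write out $W_G$ explicitly and carry through the cyclic-derivative computation.
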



\section{$G$-action on 2-cluster categories of type $\mathbb{A}$} \label{sect 31} 

Let $G=\{1,\zs\}$ be the group with two elements.
In this subsection, we describe a  $G$-action on the 2-cluster category $\calc^2=\cald^b(\textup{mod}\,\zL)/\tau^{-1}[2]$, with $\zL$ a path algebra of type $\mathbb{A}_r$, with $r$ odd. 
The $G$-action is inspired by an example in \cite{RR}. We will show later that this action is compatible with our $G$-action on dimer tree algebras.

In order to describe the $G$-action on $\calc^2$, we choose $\zL$ to  the path algebra of the following quiver
\[\small\xymatrix@R5pt{ &2\ar[r]^{\za_2} &3\ar[r]^{\za_3}&\cdots\ar[r]^{\za_{n-1}}&n\\
1\ar[ru]^{\za_1}\ar[rd]_{\za_1'}\\
 &2'\ar[r]_{\za_2'} &3'\ar[r]_{\za'_3}&\cdots\ar[r]_{\za_{n-1}'}&n'\\
}
\]
with $G$-action given by $\zs(1)=1$, $\zs(i)=i'$, for $i>1$ and $\zs(\za_i)=\za_i'$. This is Example 2.1 in \cite{RR}. The corresponding skew group algebra $\zL G$  is Morita equivalent to the path algebra of the following quiver. 
\[\small\xymatrix@R10pt
{ 1^+\ar[rd]^{\za_1^+}\\
&2\ar[r]^{\za_2}&3\ar[r]^{\za_3}&\cdots \ar[r]^{\za_{n-1}}&n\\
1^-\ar[ru]_{\za_1^-}\\}
\]
Note that $\zL$ is of Dynkin type $\mathbb{A}_{2n-1}$ and $\zL G$ is of Dynkin type $\mathbb{D}_{n+1}$.

The Auslander-Reiten quivers of $\textup{mod}\,\zL$ and $\textup{mod}\,\zL G$ are illustrated in Figure~\ref{fig AR} in the case $n=4.$

\begin{figure}
 \[\tiny\xymatrix@!@R-30pt@!@C-30pt{
P(4)\ar[rd] &&\cdot\ar[rd]&&\cdot\ar[rd]&&I(4')\ar[rd]
\\&P(3)\ar[ru]\ar[rd]&&\cdot\ar[ru]\ar[rd]&&\cdot\ar[rd]\ar[ru]&&I(3')\ar[rd]
\\&&P(2)\ar[ru]\ar[rd]&&\cdot\ar[ru]\ar[rd]&&\cdot\ar[rd]\ar[ru]&&I(2')\ar[rd]
\\&&&P(1)\ar[ru]\ar[rd]&&\cdot\ar[ru]\ar[rd]
&&\cdot\ar[rd]\ar[ru]&&I(1)
\\&&P(2')\ar[rd]\ar[ru]&&\cdot\ar[rd]\ar[ru]&&\cdot\ar[rd]\ar[ru]&&I(3)\ar[ru]
\\&P(3')\ar[ru]\ar[rd]&&\cdot\ar[rd]\ar[ru]&&\cdot\ar[rd]\ar[ru]&&I(4)\ar[ru]
\\
P(4')\ar[ru]&&\cdot\ar[ru]&&\cdot\ar[ru]&&I(1)
\ar[ru]}
\hfill \tiny
\xymatrix@!@R-30pt@!@C-25pt{
P(4)\ar[rd] &&\cdot\ar[rd]&&\cdot\ar[rd]&&I(4)\ar[rd]
\\&P(3)\ar[ru]\ar[rd]&&\cdot\ar[ru]\ar[rd]&&\cdot\ar[rd]\ar[ru]&&I(3)\ar[rd]
\\ &\!\!P(1^+)\ar[r]&P(2)\ar[ru]\ar[rd]&\cdot\ar@{<-}[l]\ar[r]&\cdot\ar[ru]\ar[rd]\ar[r]&\cdot\ar[r]&\cdot \ar[rd]\ar[ru]&\!\!I(1^-)\ar@{<-}[l]\ar[r]&I(2)
\\&P(1^-)\ar[ru]&&\cdot\ar[ru]
&&\cdot\ar[ru]&&I(1^+)\ar[ru]
}
\]
\caption{The Auslander-Reiten quivers of $\textup{mod}\,\zL$ and $\textup{mod}\,\zL G$.}\label{fig AR}
\end{figure}

The action of $\zs$ on $\textup{mod}\,\Lambda$ is given by the reflection along the horizontal line through the $\tau$-orbit of $P(1)$. This action induces a $G$-action on the derived category and on the 2-cluster category that are also given by the reflection at the horizontal line through the central $\tau$-orbit. Note that the Auslander-Reiten quiver of the 2-cluster category has the shape of a cylinder and therefore has $2n-1 $ different $\tau$-orbits, unlike the 1-cluster category, which has the shape of  a Moebius strip and therefore has only $n$ different $\tau$-orbits.

The two 2-cluster categories $\cals^2_{\mathbb{A}_{2n-1}}$ and $\cals^2_{\mathbb{D}_{n+1}}$ are illustrated in Figure~\ref{fig AR 2cluster BM} in the case $n=3$. In type $\mathbb{A}$, the vertices of the AR quiver in that figure are labeled by the corresponding 2-diagonals of the 14-gon as explained in section~\ref{sect 2-cluster}. 
Each 2-diagonal is denoted by the pair $(i,j)$ of its endpoints on the boundary of the 14-gon.  In this example, we see that
the action of $\zs$ maps the 2-diagonal $(i,j)$ to the 2-diagonal $(i+7, j+7)$. For example the first element in the bottom $\tau$-orbit is $(1,4)$ and it is mapped to the first element in the top $\tau$-orbit, thus $\zs((1,4))=(1+7,4+7)=(8,11)$. The elements in the center $\tau$-orbit are fixed by $\zs$. For example $\zs((1,8))=(1+7,8+7)=(8,1)$. 

For general $n$, and  setting $N=2n+1$, the 2-cluster category 
$\calc^2_{\mathbb{A}_{N-2}}$ corresponds to the category of 2-diagonals in a regular $2N$-gon, and the action of $\zs$ sends a 2-diagonal $(i,j)$ to the 2-diagonal $(i+N,j+N)$.  In other words $\zs$ acts on the polygon by a rotation by angle $\pi$. 

Denote by $F_2\colon \calc^2_{\mathbb{A}_{N-2}}\to  \calc^2_{\mathbb{D}_{(N+1)/2}}$  the functor induced by the induction functor $F$ and denote by $H_2\colon  \calc^2_{\mathbb{D}_{(N+1)/2}}\to \calc^2_{\mathbb{A}_{N-2}}$  the one induced by the restriction functor $H$. The following results follow from Propositions~\ref{prop RR} and \ref{prop RR2}.

 \begin{prop}
  \label{prop 2-clusters} Let $N\ge 7$ be an odd integer and $X,Y\in\calc^2_{\mathbb{A}_{N-2}}$ be indecomposable objects. Then
  
  (a) $H_2F_2(X)\cong X\oplus \zs X$.
  
  (b) $F_2(X)\cong F_2(Y) $ if and only if $X\cong Y$ or $X\cong \zs Y$.
  
  (c) If $X\cong \zs X$ then $F_2(X) $ has exactly two indecomposable summands. 
  
  (d) If $X\not\cong \zs X$ then $F_2(X)$ is indecomposable. 
\end{prop}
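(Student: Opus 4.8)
The statement is Proposition~\ref{prop 2-clusters}, which asserts for the induced functors $F_2,H_2$ on $2$-cluster categories the same four properties that Proposition~\ref{prop RR} gives for $F,H$ on module categories. The plan is to \emph{transport} Proposition~\ref{prop RR} along the triangle functors. Since $\calc^2_{\mathbb{A}_{N-2}}=\cald^b(\textup{mod}\,\zL)/\tau^{-1}[2]$ is the orbit category of the bounded derived category, and $F,H$ are exact functors between the module categories that commute (up to isomorphism) with the Auslander–Reiten translation $\tau$ and with the shift $[1]$ by Proposition~\ref{prop RR2} and Lemma~\ref{lem Gmod}, they descend first to the derived categories and then to the $\tau^{-1}[2]$-orbit categories; these descended functors are exactly $F_2$ and $H_2$. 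The key point is that every indecomposable object of $\calc^2$ is represented by an indecomposable object of $\textup{mod}\,\zL$ (after rotating the fundamental domain, as in the displayed AR-quiver of $\textup{mod}\,\zL$ versus the cylinder of $\calc^2$), so statements about indecomposables in $\calc^2$ reduce to statements about indecomposable $\zL$-modules, where Proposition~\ref{prop RR} applies verbatim.

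First I would record that $F$ and $H$ are exact and send projectives to projectives, hence induce triangle functors $\cald^b(\textup{mod}\,\zL)\to\cald^b(\textup{mod}\,\zL G)$ and back; by Proposition~\ref{prop RR2}(a),(b) they commute with almost split sequences, hence with $\tau$ in the derived category, and being triangle functors they commute with $[2]$. Therefore they pass to the orbit categories and give $F_2,H_2$, and for an indecomposable $X\in\textup{mod}\,\zL$ viewed as an object of $\calc^2$ one has $F_2(X)=F(X)$ and similarly for $H_2$, both regarded in the orbit category. Next, for part (a): $H_2F_2(X)=HF(X)=X\oplus\zs X$ by Proposition~\ref{prop RR}(a), where $\zs$ on $\calc^2$ is the functor induced by the module-level twist $\zs$ (which also descends, being exact and commuting with $\tau,[1]$ by Lemma~\ref{lem Gmod}); one only needs to check that the right-hand side, read in $\calc^2$, is the $2$-cluster-category $\zs$-action described in Section~\ref{sect 31}, i.e. rotation of the $2N$-gon by $\pi$, which is the identification made earlier in the section. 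For (b): if $F_2(X)\cong F_2(Y)$ in $\calc^2_{\mathbb{D}}$, lift $X,Y$ to indecomposable $\zL$-modules; then $F(X)\cong F(Y)\oplus(\text{shifts})$ — more precisely $F(X)$ and $F(Y)$ become isomorphic after applying some power of $\tau^{-1}[2]$, and since $F$ commutes with $\tau$ and $[2]$ this means $F(X)\cong F(\tau^k[2\ell]Y)$ in $\textup{mod}\,\zL G$ up to shift; reduce to the case of genuine module isomorphism and invoke Proposition~\ref{prop RR}(b), then push the conclusion $X\cong Y$ or $X\cong\zs Y$ back into $\calc^2$. Parts (c) and (d) follow the same pattern: the number of indecomposable summands of $F_2(X)$ equals the number of indecomposable summands of $F(X')$ for an indecomposable module $X'$ representing $X$ (Krull–Schmidt holds in both orbit categories since they are Hom-finite with finitely many indecomposables), and the condition $X\cong\zs X$ in $\calc^2$ corresponds, via the orbit correspondence, to $X'\cong\zs X'$ in $\textup{mod}\,\zL$ for a suitable representative (because $\zs$ acts on the $2N$-gon by rotation by $\pi$, an isomorphism $X\cong\zs X$ in the orbit category forces an actual module isomorphism of some representative with its twist); then apply Proposition~\ref{prop RR}(c),(d).

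The main obstacle, and the step I would spend the most care on, is the bookkeeping in part (b): an isomorphism in the orbit category $\calc^2$ between $F_2(X)$ and $F_2(Y)$ is a priori only an isomorphism between $\bigoplus_k \tau^{-k}[2k]F(X)$ and the analogous sum for $F(Y)$ in the derived category, so one must argue — using that $\zL G$ is representation-finite of Dynkin type $\mathbb{D}_{n+1}$ and that all these objects are (shifts of) indecomposable modules lying in a single fundamental domain — that this forces $F(X)\cong F(\zs^{\epsilon}Y)$ as $\zL G$-modules up to an allowed shift, and then that the shift can be absorbed. Equivalently, one checks that the orbit-category isomorphism classes of indecomposables are in bijection with indecomposable $\zL G$-modules (respectively $\zL$-modules) via the chosen fundamental domain, and runs all four statements through this bijection; once that dictionary is set up cleanly, (a)–(d) are immediate translations of Proposition~\ref{prop RR}. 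I do not expect any genuinely new homological input beyond Propositions~\ref{prop RR} and \ref{prop RR2}; the work is entirely in verifying that those results survive passage to the $\tau^{-1}[2]$-orbit category, which is routine but must be stated.
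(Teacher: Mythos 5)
Your proposal takes essentially the paper's approach: the paper gives no proof of this proposition, simply asserting that it follows from Propositions~\ref{prop RR} and \ref{prop RR2}, and your transport-through-the-orbit-category argument (exactness, preservation of projectives, commutation with $\tau$ and $[2]$, descent to $\cald^b/\tau^{-1}[2]$) is the right elaboration of that one-line justification. One correction is worth making: your stated ``key point'' that every indecomposable object of $\calc^2_{\mathbb{A}_{N-2}}$ is represented by an indecomposable $\zL$-module is not true --- for $N=7$, $\zL=k\mathbb{A}_5$ has $15$ indecomposable modules while $\calc^2_{\mathbb{A}_5}$ has $N(N-2)=35$ indecomposables, so the fundamental domain necessarily also contains modules shifted by $[1]$ and projectives shifted by $[2]$. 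This does not break your argument, because the derived functors $F,H$ commute with $[1]$ and so Proposition~\ref{prop RR} applied to a module representative determines the behavior on all its shifts; you implicitly acknowledge this when you later write ``(shifts of) indecomposable modules,'' but the key step should be phrased in those terms from the start rather than as a reduction to modules alone.
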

\begin{prop}
 \label{prop 2-clusters2} 
(a)  The functor $F_2$  maps an Auslander-Reiten triangle in $\calc^2_{\mathbb{A}_{N-2}}$  to a direct sum of Auslander-Reiten triangles in $\calc^2_{\mathbb{D}_{(N+1)/2}}$

(b)  The functor $H_2$  maps  Auslander-Reiten triangle in 
$\calc^2_{\mathbb{D}_{(N+1)/2}}$
 to a direct sum of Auslander-Reiten triangles in $\calc^2_{\mathbb{A}_{N-2}}$.
\end{prop}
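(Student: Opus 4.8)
The plan is to deduce Proposition~\ref{prop 2-clusters2} from the module-category statement in Proposition~\ref{prop RR2} by transporting the almost split sequences through the appropriate equivalences of categories. The essential point is that the functors $F_2$ and $H_2$ on the 2-cluster categories are, by definition, induced from $F$ and $H$ on the module categories, so the work is to keep track of how Auslander-Reiten triangles and Auslander-Reiten sequences correspond to one another under the relevant localizations and stable-category constructions.

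First I would recall the standard dictionary relating Auslander-Reiten theory in a module category to Auslander-Reiten theory in the associated (2-)cluster category. Since $\calc^2_H = \cald^b(\textup{mod}\,H)/\tau^{-1}[2]$ for $H$ hereditary of type $\mathbb{A}_{N-2}$ (respectively $\mathbb{D}_{(N+1)/2}$), the bounded derived category $\cald^b(\textup{mod}\,H)$ has Auslander-Reiten triangles obtained from the almost split sequences in $\textup{mod}\,H$ together with the triangles coming from shifts of those sequences, and these descend to Auslander-Reiten triangles in the orbit category $\calc^2_H$. The functors $F$ and $H$ between $\textup{mod}\,\zL$ and $\textup{mod}\,\zL G$ (with $\zL$ of type $\mathbb{A}_{2n-1}$ and $\zL G$ Morita equivalent to the algebra of type $\mathbb{D}_{n+1}$) are exact and commute with the relevant shift and $\tau$ up to the natural isomorphisms recorded earlier (in particular $\zO_{AG}F=F\zO_A$ from Proposition~\ref{prop Fsyz}, and the analogous compatibility with $\tau$ follows from Proposition~\ref{prop RR2} itself applied iteratively). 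Hence they induce exact functors on the derived categories that are compatible with the orbit-category construction, and these are precisely $F_2$ and $H_2$.

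The main step is then to verify that an Auslander-Reiten triangle
\[\xymatrix{X\ar[r]&Y\ar[r]&Z\ar[r]&X[1]}\]
in $\calc^2_{\mathbb{A}_{N-2}}$ is the image, under the canonical projection $\cald^b(\textup{mod}\,\zL)\to\calc^2_{\mathbb{A}_{N-2}}$, of a shift of an almost split sequence in $\textup{mod}\,\zL$; this is exactly how the Auslander-Reiten structure of $\calc^2$ is built, so it requires only citing the standard facts (e.g. from \cite{BM1} together with the orbit-category formalism). Applying $F$ to that almost split sequence and using Proposition~\ref{prop RR2}(a) yields a direct sum of almost split sequences in $\textup{mod}\,\zL G$; projecting back down to $\calc^2_{\mathbb{D}_{(N+1)/2}}$ and using that $F_2$ commutes with the projection gives that $F_2$ applied to the original Auslander-Reiten triangle is a direct sum of Auslander-Reiten triangles, proving (a). Part (b) is entirely symmetric, using Proposition~\ref{prop RR2}(b) in place of (a) and the restriction functor in place of induction.

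The part I expect to require the most care — though it is essentially bookkeeping rather than a genuine difficulty — is checking that the natural isomorphisms making $F$ and $H$ commute with $\tau$ and $[2]$ pass cleanly to the orbit category, so that "image of an almost split sequence" and "image of its shift" both land on genuine Auslander-Reiten triangles, and that no Auslander-Reiten triangle of $\calc^2$ is missed (every indecomposable of $\calc^2$ lies in one by the equivalence with the mesh category of $\zG_{\diags}$ or $\zG_{\arcp}$). Once that compatibility is in place, Proposition~\ref{prop 2-clusters2} follows formally from Proposition~\ref{prop RR2}, in exact parallel to how Proposition~\ref{prop 2-clusters} follows from Proposition~\ref{prop RR}.
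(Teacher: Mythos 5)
The paper offers no explicit proof of this proposition beyond the remark that it ``follows from Propositions~\ref{prop RR} and \ref{prop RR2},'' and your proposal fills in the argument along precisely the lines the authors intend: lift an AR-triangle of $\calc^2$ to $\cald^b(\textup{mod}\,\zL)$, realize it as a shift of an almost split sequence, apply Reiten--Riedtmann (Proposition~\ref{prop RR2}), and descend to the orbit category. So the route is the same.

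The one point you wave at but should not brush past is your assertion that an AR-triangle of $\calc^2_{\mathbb{A}_{N-2}}$ always lifts to a shift of an almost split sequence. AR-triangles of $\cald^b(\textup{mod}\,\zL)$ ending at a shift $P[n]$ of a projective are the so-called connecting triangles $\nu P[n-1]\to E\to P[n]\to \nu P[n]$; these do \emph{not} arise as shifts of almost split sequences, so the lift must be chosen with some care. The claim you need is that every $\tau^{-1}[2]$-orbit contains a shift of a \emph{non-projective} module. This does hold: in the orbit category one has $P\cong \tau P[-2]=\nu P[-3]$, and since a connected hereditary algebra with more than one simple is not self-injective, iterating $\nu$ eventually leaves the class of projectives, so a non-projective representative is always available. (Your parenthetical---that every indecomposable lies in an AR-triangle by the mesh description---only gives existence of AR-triangles, not the form of the lift.) Alternatively one can avoid the issue by noting that $F$ preserves projectives, commutes with $\nu$, and hence maps connecting triangles to direct sums of connecting triangles, handling the two flavours of AR-triangle separately. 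With either patch the argument is complete and agrees with the paper's intent.
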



\section{$G$-actions on dimer tree algebras}

The class of dimer tree algebras was introduced in \cite{SS3}. The terminology stems from the fact that the dual graph of the quiver of the algebra is a tree, and the algebra can be extended to a dimer model on a disk.

\subsection{Dimer tree algebras}
We recall the definition of dimer tree algebras following \cite{SS3,SS4}.
 A \emph{chordless cycle} in a quiver $Q$ is a cyclic path $C=x_0\to x_1 \to\dots\to x_t\to x_0$ such that $x_i\ne x_j$ if $i\ne j$ and the full subquiver on vertices $x_0,x_1,\dots, x_t$ is equal to $C$. 
The arrows that lie in exactly one chordless cycle will be called {\em boundary arrows} and those that lie in two or more chordless cycles {\em interior arrows} of $Q$. 

\begin{definition}
  The \emph{dual graph} $G$ of $Q$ is defined as follows. The set of vertices $G_0$ is the union of 
the set of chordless cycles of $Q$ and the set of boundary arrows of $Q$. 
The set of edges $G_1$ is the union of two sets
called the set of \emph{trunk edges} and the set of \emph{leaf branches}.  A trunk edge $\xymatrix{C\ar@{-}[r]^\za&C'}$ is drawn between any pair of chordless cycles  $(C,C')$ that share an arrow $\za$. A leaf branch $\xymatrix{C\ar@{-}[r]^\za &\za}$ is drawn between any pair $(C,\za)$ where $C$ is a chordless cycle and $\za$ is a boundary arrow such that $\za $ is contained in $C$.
\end{definition}

\begin{definition}
\label{def Q}  A  finite connected quiver $Q$ without loops and 2-cycles  is called a \emph{dimer tree quiver} if  it  satisfies the following conditions. 
\begin{itemize}
\item [(Q1)] Every arrow of $Q$ lies in at least one chordless cycle.
\item[(Q2)]\label{tree} The dual graph of $Q$ is a tree. 
\item[(Q3)] The boundary arrows of $Q$ form a simple (non-oriented) cycle.
\end{itemize}
\end{definition}

The following properties follow easily from the definition.
\begin{prop}\label{prop Q}\cite[Proposition 3.4]{SS3}  Let $Q$ be a dimer tree quiver. Then
 \begin{enumerate}
\item $Q$ has no parallel arrows.
\item $Q$ is planar.
\item For all arrows $\za $ of $Q$,
 \begin{enumerate}
\item[(a)] either $\za$ lies in exactly one chordless cycle, 
\item[(b)] or $\za$ lies in exactly two chordless cycles.\end{enumerate}
\item Any two chordless cycles in $Q$ share at most one arrow.
\end{enumerate}
\end{prop}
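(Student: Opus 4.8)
The statement to prove is Proposition~\ref{prop Q}: for a dimer tree quiver $Q$, (1) $Q$ has no parallel arrows, (2) $Q$ is planar, (3) every arrow lies in exactly one or exactly two chordless cycles, and (4) any two chordless cycles share at most one arrow. The plan is to derive each item from the two defining axioms (Q1) and (Q2), working in an order that lets later parts reuse earlier ones. I would treat (4) and the "exactly two" half of (3) as the geometric heart of the argument, since the "at least one" in (3) is immediate from (Q1).

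First I would dispose of (1). Suppose $\za,\zb\colon x\to y$ are parallel arrows. By (Q1), $\za$ lies in a chordless cycle $C$; since $C$ is a cyclic path visiting distinct vertices and the full subquiver on its support equals $C$, the arrow $\zb$ (being in that full subquiver, as $x,y$ both lie on $C$) would have to be an arrow of $C$ itself, forcing $\zb=\za$ or creating a $2$-cycle — contradicting that $Q$ has no $2$-cycles. So no parallel arrows exist. Next, for (4), suppose two distinct chordless cycles $C$ and $C'$ share two arrows. One shows this produces a cycle in the dual graph $G$: each shared arrow gives a trunk edge between $C$ and $C'$, and two distinct trunk edges between the same pair of vertices of $G$ already constitute a cycle (or, being careful about the definition, one must rule out that both shared arrows yield the "same" edge — they do not, as edges are indexed by the arrow). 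This contradicts (Q2). The subtlety here is making sure the two shared arrows are genuinely distinct arrows of $Q$, which follows once (1) is known, since two chordless cycles sharing "an arrow between $x$ and $y$" means literally the same arrow.

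For the remaining part of (3), I need to rule out an arrow $\za$ lying in three or more chordless cycles $C_1,C_2,C_3$. By (4), these cycles pairwise share only the arrow $\za$, so in $G$ we get three trunk edges $C_1 - C_2$, $C_2 - C_3$, $C_1 - C_3$ (all labeled $\za$, but incident to distinct vertex pairs), which is a triangle in $G$ — again contradicting that $G$ is a tree. One has to check that $C_i - C_j$ really is an edge for each pair, i.e. that $C_i$ and $C_j$ do share the arrow $\za$; that is true by hypothesis. So each arrow lies in at most two chordless cycles, and combined with (Q1) we get exactly one or exactly two. Finally, for planarity (2), I would argue that the dual graph being a tree lets one build an embedding inductively: start with a single chordless cycle drawn as a convex polygon in the plane, then glue on each adjacent chordless cycle along its shared (single, by (4)) arrow, placing the new polygon in the outer face on the far side of that arrow. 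Because $G$ is a tree, this gluing process never needs to revisit a region and never forces two polygons to overlap; the leaf branches contribute boundary arrows that bound only one polygon and pose no obstruction. The main obstacle I anticipate is (2): the inductive gluing argument needs the structure theorem that $Q$ is, up to the tree of cycles, exactly determined by $G$ together with the combinatorial data of how successive cycles are attached — in particular one must verify that no arrow outside the glued polygons connects two already-placed vertices in a way that would cross an edge, which is where (1), (3), and (4) all get used together. I would expect to invoke the planarity of trees and an Euler-characteristic or Jordan-curve bookkeeping to close this rigorously, or else cite it as following from the explicit combinatorial description of dimer tree quivers established in \cite{SS3}.
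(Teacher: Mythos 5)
The paper does not prove this proposition; it cites \cite[Proposition 3.4]{SS3}, so there is no in-paper argument to compare against. Assessed on its own, your arguments for (1), (3), and (4) are correct and are the natural route. For (1), the ``full subquiver'' clause in the definition of chordless cycle does force any arrow between two vertices of $C$ to be an arrow of $C$, and since $C$ is a simple cycle there is a unique arrow between any consecutive pair of its vertices, so $\zb=\za$. For (4), reading the dual graph as a multigraph with one trunk edge per shared arrow (which the labeling $\xymatrix@C10pt{C\ar@{-}[r]^\za&C'}$ indicates) is the right reading, and two parallel trunk edges form a cycle, contradicting (Q2). For (3), the lower bound is (Q1) and the triangle $C_1 - C_2 - C_3 - C_1$ in $\calg$ supplies the upper bound; note you do not actually need (4) for this step, only that each pair shares $\za$.

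The genuine gap, as you yourself flag, is (2). The gluing argument is the right idea, but the assertion ``never forces two polygons to overlap'' is doing real work. The obstruction you must rule out concretely is this: two chordless cycles $C$ and $C''$ that are not adjacent in $\calg$ may nevertheless share a vertex $v$ (this does occur in dimer tree quivers; for instance two triangles can share a vertex through a connecting third cycle). If $v$ happens not to be an endpoint of the arrows along which the gluings are performed, the inductive construction places $v$ twice at different points, producing a drawing rather than an embedding; one must also check that newly drawn polygon edges do not cross earlier ones. To close the argument you would need to show either that any shared vertex of two chordless cycles is always an endpoint of the relevant shared arrows along the tree path between them, or replace the naive gluing by a more careful construction (e.g.\ the medial-graph construction used in \cite{SS3}, or an Euler-characteristic count on the complex built from the cycles). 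Citing \cite{SS3} for this is acceptable, but as written your sketch does not yet establish planarity.
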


Since the dual graph $G$ of $Q$  is a tree, the quiver contains a chordless cycle $C_0$ that contains exactly one interior arrow. If $C$ is any chordless cycle in $Q$ we define the distance $d(C)$ of $C$ from $C_0$ to be  the length of the unique path from $C_0$ to $C$ in $G$. We define the \emph{potential} $W$ of the dimer tree quiver $Q$ as \[W=\sum_{C} (-1)^{d(C)} C,\]
 where the sum is taken over all chordless cycles of $Q$.

\begin{remark}Since the dual graph is a tree we can embed the quiver $Q$ into the plane. Then, up to sign, the potential is the sum of the clockwise chordless cycles minus the sum of the counterclockwise chordless cycles. 

 \end{remark}

\begin{definition}\label{def dimer tree} 
Let $Q$ be a dimer tree quiver and $W$ its potential. The Jacobian algebra \[A=\textup{Jac}(Q,W)\] is called a \emph{dimer tree algebra}.
\end{definition}


\subsection{A geometric model for the syzygy category of a dimer tree algebra} 
Let $A=\textup{Jac}(Q,W)$ be a dimer tree algebra. 
By definition, every dimer tree algebra is  2-Calabi-Yau tilted and (hence) Gorenstein of Gorenstein dimension 1. As usual for these algebras, we denote by $\cmp A$ the syzygy category of $A$ and by $\scmp\, A$ its stable category.

\begin{definition}
 (a) For every boundary arrow $\za$ in $Q$, we define its unique \emph{cycle path} (or \emph{zigzag path}) to be
 $\mathfrak{c}(\za)=\za_1\za_2\cdots\za_{\ell(\za)}$, where
 \begin{itemize}
\item [(i)] $\za_1=\za$ and $\za_{\ell(\za)}$  are boundary arrows, and 
 $\za_2,\ldots,\za_{\ell(\za)-1}$ are interior arrows,
\item [(ii)] every subpath of length two $\za_i\za_{i+1}$, 
 is a subpath of a chordless cycle $C_i$, and $C_i\ne C_j$ if $i\ne j$. 
\end{itemize}

 (b) The \emph{weight} $\text{w}(\za)$ of the boundary arrow $\za$ is defined as
\[\textup{w}(\za) =\left\{ 
\begin{array}
 {ll} 1&\textup{if the length of  $\mathfrak{c}(\za)$ is odd;}\\
 2&\textup{if the length of  $\mathfrak{c}(\za)$ is even.}\\
\end{array} \right.\]

(c) The \emph{total weight} of $A$ is defined as $\sum_\za \textup{w}(\za) $,
where the sum is over all boundary arrows of $Q$. 

\end{definition}
We showed in \cite{SS3} that the total weight of a dimer tree algebra is always an even number.
 We  then associate to every dimer tree algebra of weight $2N$ a polygon $\mathcal{S}$ with $2N$ vertices. Moreover $\cals$ carries the additional structure of a checkerboard pattern which is defined by a set of  radical lines $\rho(i)$ associated to the vertices $i$ of $Q$. In particular, each radical line is a 2-diagonal. The checkerboard pattern is constructed as the medial graph of the twisted completed dual graph of $Q$. We refer to \cite{SS3} for the details of this constructions and to section~\ref{sect examples} for examples.

By definition, a 2-diagonal $(i,j)$ connects two boundary points of $\cals$ of opposite parity. We orient each 2-diagonal from its odd endpoint towards its even endpoint. This allows us to talk about the direction of a crossing between 2-diagonals. 
  Then, for each 2-diagonal $\zg$ in $\mathcal{S}$, we define projective modules $P_0(\zg)=\oplus_i P(i)$ and $P_1(\zg)=\oplus_j P(j)$, where the first sum is over all $i$ such that $\rho(i)$ crosses $\zg$ from right to left and the second sum is over all $j$ such that $\rho(j)$ crosses $\zg$ from left to right. 
 
Let $\diags$ be the category of 2-diagonals of $\cals$ introduced in section~\ref{sect A} and $\scmp\, A$ the stable syzygy category of $A$.
  The main result of \cite{SS4} is the following.
\begin{thm}\cite[Theorem 1.1]{SS4}\label{thm SS4}
Let $A$ be a dimer tree algebra  and $\mathcal{S}$ the associated checkerboard polygon. 
For each 2-diagonal $\zg$ in $\cals$ there exists a morphism $f_\zg\colon P_1(\zg)\to P_0(\zg)$ such that the mapping $\zg\mapsto \coker f_\zg$ induces an equivalence of categories 
\[\Phi\colon\diags \to \scmp \, A.\]
 Under this equivalence, the radical line $\rho(i)$ corresponds to the radical of the indecomposable projective $P(i)$ for all $i\in Q_0$. The clockwise rotation $R$ of $\cals$ corresponds to the shift $\zO$ in $\scmp\,A$ and $R^2$ corresponds to the  inverse Auslander-Reiten translation $\tau^{-1}=\zO^2$. 
Thus
\[ 
\begin{array} {lcl}
 \Phi(\rho(i))&=&\rad P(i)\\
  \Phi\circ R&=& \zO\circ \Phi\\
   \Phi\circ R^2&=& \tau^{-1}\circ \Phi
\end{array}
 \]
Furthermore, $\Phi$ maps the 2-pivots in $\diags$ to the irreducible morphisms in $\scmp\, A$, and the  meshes in $\diags$ to the Auslander-Reiten triangles in $\scmp\,A$.
\end{thm}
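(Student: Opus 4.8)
The plan is to construct the functor $\Phi$ by hand and then recognize it as an equivalence. Throughout I will use that, by \cite{KR} and \cite{Bu}, $\scmp\,A$ is a triangulated $3$-Calabi--Yau category whose inverse shift is the syzygy operator $\zO$; in particular $[1]=\zO^{-1}$, and since the Auslander--Reiten translation of a $3$-Calabi--Yau category is $\tau=[2]$ we get $\tau^{-1}=\zO^{2}$. This already matches the geometric formula $\tau^{-1}(i,j)=(i+2,j+2)=R^{2}(i,j)$, so the third displayed identity $\Phi\circ R^{2}=\tau^{-1}\circ\Phi$ will follow from the second one, $\Phi\circ R=\zO\circ\Phi$.

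\emph{Step 1: the morphisms $f_\zg$ and the modules $M_\zg$.} Orienting every $2$-diagonal from its odd to its even endpoint, the radical lines $\rho(i)$ that cross a fixed $2$-diagonal $\zg$ are linearly ordered along $\zg$. I would show that the checkerboard pattern, being the medial graph of the twisted completed dual graph of $Q$, prescribes for each pair of consecutive crossings a canonical path of $A$ between the corresponding two vertices, and that these paths assemble --- uniquely up to rescaling the components --- into a morphism $f_\zg\colon P_1(\zg)\to P_0(\zg)$. I would then read off that $M_\zg=\coker f_\zg$ is the string module whose dimension vector is the crossing sequence of $\zg$; in particular $M_\zg$ is indecomposable and non-projective, $\supp M_\zg=\{i:\rho(i)\text{ crosses }\zg\}$, and for $\zg=\rho(i)$ one recovers $M_{\rho(i)}=\rad P(i)$ directly from the short exact sequence $0\to\rad P(i)\to P(i)\to S(i)\to0$ together with the fact that $\rad P(i)$ is a syzygy. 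Since $A$ is not self-injective, no non-projective syzygy has finite projective dimension, so $f_\zg$ is never injective and $\zO M_\zg\cong\operatorname{im}f_\zg$, $\zO^{2}M_\zg\cong\ker f_\zg$. It is here that conditions (Q1), (Q2) and the planarity of $Q$ from Proposition~\ref{prop Q} are used, to ensure that the local choices glue consistently.

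\emph{Step 2: clockwise rotation computes the syzygy.} The heart of the proof is the isomorphism $\zO M_\zg\cong M_{R\zg}$, which I would obtain by showing that $P_1(\zg)\cong P_0(R\zg)$ and that $f_{R\zg}$ and $f_\zg$ fit into
\[P_1(R\zg)\xrightarrow{\ f_{R\zg}\ }P_1(\zg)\xrightarrow{\ f_\zg\ }P_0(\zg),\]
so that $\cdots\to P_1(R^2\zg)\to P_1(R\zg)\to P_1(\zg)\to P_0(\zg)\to M_\zg\to0$ is the minimal projective resolution of $M_\zg$; then $\zO M_\zg=\operatorname{im}f_\zg\cong\coker f_{R\zg}=M_{R\zg}$. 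Combinatorially this comes down to checking that the crossing sequence of $R\zg$ is obtained from that of $\zg$ by deleting the crossings swept past one endpoint of $\zg$ by the rotation and inserting those newly swept in at the other, with the prescribed paths transforming accordingly modulo the Jacobian relations $\partial_\za W=0$. This is the step I expect to be the main obstacle: it requires a precise understanding of how the relations of $A$ are encoded by the checkerboard tiles, and it is where the tree and planarity structure of $Q$ is used most heavily. Granting it, $\Phi\circ R=\zO\circ\Phi$, hence also $\Phi\circ R^{2}=\zO^{2}\circ\Phi=\tau^{-1}\circ\Phi$.

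\emph{Step 3: $\Phi$ is an equivalence.} I would define $\Phi$ on objects by $\zg\mapsto M_\zg$ and on each generating $2$-pivot $\zg\to\zg'$ by the irreducible morphism $M_\zg\to M_{\zg'}$ coming from the evident inclusion or quotient of string modules, and then verify that the mesh relations $m_\zg=0$ hold in $\scmp\,A$; this makes $\Phi$ a well-defined functor on the mesh category $\diag\,\cals$ and shows that it sends the mesh at $\zg$ to the Auslander--Reiten triangle $0\to\zO^{2}M_\zg\to\bigoplus_{\zg''\to\zg}M_{\zg''}\to M_\zg\to0$. Using Step~2 and the criterion of \cite{GES} --- an indecomposable $X$ is a non-projective syzygy iff $\zO^{2}\tau X\cong X$ --- one checks that $\Phi$ is injective on objects and essentially surjective, every indecomposable non-projective syzygy lying in the $\zO$-orbit of some $M_\zg$. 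Finally, since $\Phi$ then induces an isomorphism of translation quivers from $\zG_{\diags}$ onto the Auslander--Reiten quiver of $\scmp\,A$ and respects Auslander--Reiten triangles, a comparison of the $\Hom$-spaces --- which on both sides are computed from this common Auslander--Reiten quiver with its mesh structure --- shows $\Phi$ full and faithful, hence an equivalence; composing with the Baur--Marsh equivalence $\BM_\mathbb{A}$ of Theorem~\ref{thm BM1} then also recovers $\scmp\,A\cong\calc^{2}_{\mathbb{A}_{N-2}}$. If one instead takes the equivalence $\scmp\,A\cong\calc^{2}_{\mathbb{A}_{N-2}}$ as already known from \cite{SS3}, Steps~1--2 are still precisely what is needed, now in order to identify $\coker f_\zg$ with the object there labelled by $\zg$.
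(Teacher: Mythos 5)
The paper does not prove this theorem here at all: it is imported verbatim from \cite{SS4}, so your proposal has to stand on its own, and as written it is an outline rather than a proof. The decisive step is your Step 2, the exactness of the spliced sequence $P_1(R\zg)\to P_1(\zg)\to P_0(\zg)$ (equivalently $P_0(R\zg)\cong P_1(\zg)$ and $\operatorname{im}f_{R\zg}=\ker f_\zg$), which is precisely the statement that the rotation orbit of $\zg$ computes the minimal projective resolution of $M_\zg$; you flag it as ``the main obstacle'' and then grant it. Since $\Phi\circ R=\zO\circ\Phi$, the minimality of the presentation, and the identification of meshes with Auslander--Reiten triangles all hinge on it, nothing of substance is actually established. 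The same is true of Step 1: the assertion that the checkerboard pattern ``prescribes a canonical path'' between consecutive crossings, assembling into a morphism $f_\zg$ well defined up to scalars and compatible with the Jacobian relations $\partial_\za W=0$, is exactly the technical construction that the cited proof has to carry out, not something that follows from Proposition~\ref{prop Q} by inspection.

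There is also a concrete false claim in Step 1: $M_\zg$ is in general \emph{not} a string module. Dimer tree algebras are not special biserial, and already in the example of section~\ref{ex71} the module $M_{\rho(1)}=\rad P(1)$ has top $S(2)\oplus S(4)\oplus S(4')$ (three arrows leave the central vertex $1$; see the presentation $P(2)\oplus P(4)\oplus P(4')\to\rad P(1)\to 0$ in \ref{res1}), so it cannot be a string module. Hence your route to indecomposability and to the module structure of $M_\zg$ breaks down and must be replaced by a genuine argument (the multiplicity-free support alone does not give indecomposability). Finally, in Step 3 the deduction of fullness and faithfulness ``since Hom-spaces on both sides are computed from this common AR quiver with its mesh structure'' tacitly assumes that $\scmp\,A$ is standard, i.e.\ equivalent to the mesh category of its Auslander--Reiten quiver; this is not automatic and needs justification, and essential surjectivity likewise requires an argument that the $M_\zg$ exhaust the indecomposable non-projective syzygies (for instance via the count $N(N-2)$, which itself rests on the equivalence $\scmp\,A\cong\calc^2_{\mathbb{A}_{N-2}}$ from \cite{SS3}). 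In short, the skeleton is plausible and consistent with how the result is described in this paper, but the load-bearing steps are assumed rather than proved, and one of the supporting claims is false as stated.
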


This result has the following consequences.
\begin{corollary}\label{cor typeA}\cite[section 1]{SS3}
(a) The category $\scmp\, A$ is equivalent to the 2-cluster category of type $\mathbb{A}_{N-2}.$ In particular, the number of indecomposable syzygies is $N(N-2)$.

(b) The total weight of $A$ is a derived invariant.

(c) The projective resolution of any syzygy is periodic of period $N$ or $2N$. An indecomposable syzygy $M_\zg$ has period $N$ if and only if the corresponding 2-diagonal $\zg$ is a diameter in $\cals.$

(d) The indecomposable syzygies over $A$ are rigid $A$-modules.

(e)  Let $L,M$ be indecomposable syzygies over $A$. Then the dimension of $\Ext^1_A(L,M)\oplus \Ext^1_A(M,L)$ is equal to the number of crossing points between the corresponding 2-diagonals. In particular, the dimension is either 1 or 0.

(f) 
Let $\tau$ denote the Auslander-Reiten translation in $\textup{mod}\,A$ and $\nu$ the Nakayama functor. 
We denote the stable cosyzygy category by $\underline{\textup{CMI}}\,A$.
 Then the following diagram commutes. 
\[\xymatrix@C40pt{
\scmp\,A\ar[r]^{\tau}&\underline{\textup{CMI}}\,A\\
\diags\ar[u]^{\textup{cok}\,f_\zg} \ar[r]^{1}
&\diags \ar[u]_{\textup{ker}\,\nu\! f_\zg}  
}
\]
\end{corollary}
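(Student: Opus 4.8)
The plan is to deduce all six parts from Theorem~\ref{thm SS4} together with the geometric model of the $2$-cluster category in Theorem~\ref{thm BM1}. For (a), compose the equivalence $\Phi\colon\diags\to\scmp A$ of Theorem~\ref{thm SS4} with $\BM_{\mathbb A}\colon\diags\to\calc^2_{\mathbb A_{N-2}}$ of Theorem~\ref{thm BM1} to obtain an equivalence $\scmp A\cong\calc^2_{\mathbb A_{N-2}}$ (compatible with the triangulated structures, since $\Phi$ matches $\zO$ with the rotation $R$ and meshes with Auslander--Reiten triangles). The count $N(N-2)$ is then the number of $2$-diagonals of $\cals_{2N}$: each of the $2N$ vertices is an endpoint of exactly $N-2$ of them --- the $N$ vertices of opposite parity, minus the two that give boundary segments --- and every $2$-diagonal has two endpoints. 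For (b), a derived equivalence of dimer tree algebras $A,A'$ is in particular a singular equivalence, so by (a) we get $\calc^2_{\mathbb A_{N-2}}\cong\calc^2_{\mathbb A_{N'-2}}$, where $2N$ and $2N'$ are the respective total weights; equating the (finite) numbers of indecomposable objects gives $N(N-2)=N'(N'-2)$, and since $N\mapsto N(N-2)$ is strictly increasing on the positive integers, $N=N'$.

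For (c), Theorem~\ref{thm SS4} tells us that $\zO$ corresponds to the one-step clockwise rotation $R$ of $\cals_{2N}$ (the rotation with $R^2=\tau^{-1}$), which has order $2N$. A rotation preserves the parity of the difference of the two endpoints of a chord and maps boundary segments to boundary segments, so each $R^k\zg$ is again a $2$-diagonal and hence each $\zO^k M_\zg$ is again an indecomposable non-projective syzygy; consequently the minimal projective resolution of $M_\zg$ is periodic, with period the least $k>0$ satisfying $R^k\zg=\zg$. Writing $\zg=(i,j)$, the condition $\{i+k,j+k\}\equiv\{i,j\}\pmod{2N}$ forces $k\equiv0\pmod{2N}$, except when $j-i\equiv N\pmod{2N}$ --- i.e.\ $\zg$ is a diameter of $\cals$ --- in which case already $k=N$ works; this yields period $N$ for diameters and $2N$ otherwise, and a decomposable syzygy has the least common multiple of the periods of its summands, again $N$ or $2N$.

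Parts (d) and (e) are the substantive ones, and (d) is the special case $L=M$ of (e): a $2$-diagonal is a straight chord with no self-crossing, so the right-hand side is $0$ and $\Ext^1_A(M,M)=0$, i.e.\ $M$ is rigid. For (e), the dimension-shift isomorphism $\Ext^1_A(L,M)\cong\underline{\Hom}_{\scmp A}(\zO L,M)$ --- valid for $L,M\in\cmp A$, where projective and injective objects coincide --- together with $\Phi$ and the relation $\Phi\circ R=\zO\circ\Phi$ of Theorem~\ref{thm SS4}, identifies $\Ext^1_A(M_\zg,M_\zd)\cong\Hom_{\diags}(R\zg,\zd)$ and symmetrically $\Ext^1_A(M_\zd,M_\zg)\cong\Hom_{\diags}(R\zd,\zg)$. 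It then remains to prove, inside the mesh category $\diags$ --- equivalently, via Theorem~\ref{thm BM1}, inside $\calc^2_{\mathbb A_{N-2}}$ --- that $\dim\Hom_{\diags}(R\zg,\zd)+\dim\Hom_{\diags}(R\zd,\zg)$ equals the number of crossing points of $\zg$ and $\zd$; the ``at most one'' statement is then automatic, since two straight chords of a convex polygon meet in at most one point. This hom-space count is where the real work lies: one either analyses directly which compositions of $2$-pivots survive the mesh relations in $\zG_{\diags}$, or extracts it from the Baur--Marsh description of morphisms in type $\mathbb A$. I expect this step to be the main obstacle; everything else is bookkeeping around Theorem~\ref{thm SS4}.

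Finally, for (f), fix a $2$-diagonal $\zg$ and let $P_1(\zg)\xrightarrow{f_\zg}P_0(\zg)\to M_\zg\to0$ be the projective presentation of Theorem~\ref{thm SS4}. Applying the Nakayama functor $\nu=D\Hom_A(-,A)$ gives the exact sequence $0\to\ker(\nu f_\zg)\to\nu P_1(\zg)\to\nu P_0(\zg)$; on the other hand $\tau M_\zg=D\,\textup{Tr}\,M_\zg$ with $\textup{Tr}\,M_\zg=\coker\Hom_A(f_\zg,A)$, and dualising this cokernel presentation identifies $\tau M_\zg$ with $\ker(\nu f_\zg)$ inside the stable cosyzygy category $\underline{\textup{CMI}}\,A$. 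Since $\Phi(\zg)=M_\zg$ and the lower arrow of the square is $\id_{\diags}$, this is exactly the asserted commutativity. One only needs to observe that passage to the stable categories discards the split projective and injective summands, so it is harmless whether or not $f_\zg$ is a minimal presentation --- a point already ensured by Theorem~\ref{thm SS4}.
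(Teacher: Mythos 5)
Your treatment of parts (a), (b), (c), and (f) is correct and follows the route one would expect from Theorems~\ref{thm SS4} and \ref{thm BM1}: the composition of equivalences, the combinatorial count of $2$-diagonals, the singular-equivalence argument for (b), the rotation-period analysis for (c), and the standard $\tau=D\textup{Tr}$ manipulation for (f) are all sound. (For the record, the paper itself gives no proof of this corollary --- it is cited from \cite{SS3} --- but your arguments coincide in spirit with the paper's spelled-out proof of the type-$\mathbb{D}$ analogue, Corollary~\ref{cor 67}.)

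The gap is in (e), and hence in (d), which you deduce from it. Your dimension-shift reduction
$\Ext^1_A(M_\zg,M_\zd)\cong\Hom_{\diags}(R\zg,\zd)$ is legitimate (it uses that $L$ is Cohen--Macaulay, so $\Ext^1_A(L,\textup{proj})=0$, which is what makes the surjection $\Ext^1_A(L,M)\twoheadrightarrow\underline{\Hom}(\zO L,M)$ an isomorphism; ``projectives and injectives coincide'' gestures at this but it is worth being explicit). However, the resulting claim ---
that $\dim\Hom_{\diags}(R\zg,\zd)+\dim\Hom_{\diags}(R\zd,\zg)$ equals the number of crossing points of $\zg$ and $\zd$ --- is precisely the substantive content of (e), and you leave it unproved, saying only that ``this is where the real work lies.'' This is a genuine, and self-acknowledged, hole: without it neither (e) nor the rigidity in (d) follows. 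The paper's mechanism for filling the analogous gap in type $\mathbb{D}$ is Proposition~\ref{lem crossing ext}, established ``by direct inspection of the support of $\Ext^1$ in the AR-quiver''; the type-$\mathbb{A}$ statement you need is the same kind of fact (it is part of the Baur--Marsh geometric model, essentially the statement that $m$-diagonals are non-crossing if and only if the corresponding objects in the $m$-cluster category have no extensions). To complete the argument you should either cite this result from \cite{BM1}, or carry out the mesh-relation/AR-quiver inspection in type $\mathbb{A}$ explicitly, as you yourself propose but do not do.
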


\subsection{Dimer tree algebras with $G$-action}
Let $A^0=\textup{Jac}(Q^0,W^0) $ be a dimer	tree algebra and $\za\colon i\to j$ a boundary arrow in $Q^0$. Define a new quiver $Q$ by taking two copies of $Q^0$ and gluing them together by identifying the two copies of the arrow $\za$, see the left picture in Figure~\ref{fig gluing}. Thus every vertex $x$ of $Q^0$, except for $i$ and $j$, produces two vertices $x,\zs x$ in $Q$ and 
every arrow $\zb$ of $Q^0$, except for the arrow $\za$, produces two arrows $\zb,\zs \zb$ in $Q$. By setting $\zs i=i, \zs j=j $, and $\zs \za =\za$, we obtain an admissible $G=\{1,\zs\}$-action on $Q$. 

We define a potential $W$ on $Q$ by $W=W^0-\zs W^0$, and let $A=\textup{Jac}(Q,W)$ be the associated Jacobian algebra. We call $A$ the \emph{fibered product of $A^0$ with itself along $\za$}.

To show that $A$ is again a dimer tree algebra, we need to check the two conditions in Definition~\ref{def Q}. The condition (Q1) that every arrow lies in at least one chordless cycle follows directly from the construction and the fact that it holds for $Q^0$. Condition (Q2) is that the dual graph $\calg$ of $Q$ is a tree. By construction, $\calg$ is given by taking two copies of the dual graph $\calg^0$ of $Q^0$ and gluing them together by identifying the two copies of the leaf edge corresponding to $\za$, see the right picture Figure~\ref{fig gluing}. 
\begin{figure}
\begin{center}
\scalebox{0.8}{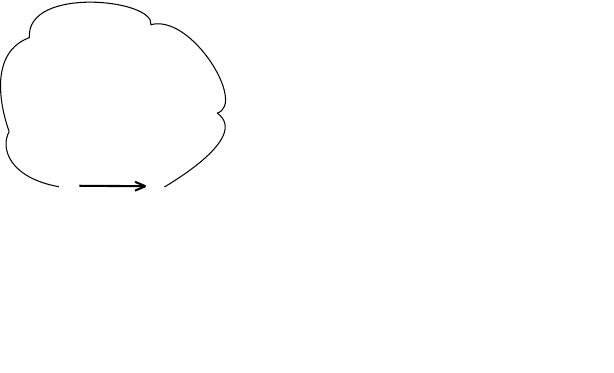}
\caption{Gluing two copies of a dimer tree algebra along a boundary arrow. The left picture shows the quiver and the right picture shows the dual graph.}
\label{fig gluing}
\end{center}
\end{figure}
We have shown the following.
\begin{prop}
 \label{prop gluing 1}
 For every dimer tree algebra $A^0$ and every boundary arrow $\za\colon i\to j$ in $A^0$, the fibered product of $A^0$ with itself along $\za$ is a dimer tree algebra $A$ with an admissible $G$-action. The element $\zs$ acts on the quiver $Q$ of $A$ by reflection at the line along the arrow $\za$ and $\zs$ acts on the dual graph $\calg$ by reflection at the line through the midpoint of the edge $\za$ and perpendicular to the edge $\za$. Moreover, the fixed vertex set of $\zs$ is $\{i,j\}$ and the fixed arrow set is $\{\za\}$. \qed 
\end{prop}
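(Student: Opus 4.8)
My proposal is to verify the four parts of the statement in order --- that $Q$ is a dimer tree quiver, that $W=W^0-\zs W^0$ is the canonical potential of $Q$ up to sign (so that $A$ is a dimer tree algebra), that the induced $G$-action is admissible, and finally the geometric descriptions and the fixed-point sets --- and everything reduces to one bookkeeping lemma about the chordless cycles of $Q$. \emph{Claim: every chordless cycle of $Q$ lies entirely inside one of the two copies of $Q^0$; conversely each chordless cycle $C$ of $Q^0$ gives a chordless cycle in each copy; and the only arrow belonging to a cycle of the first copy and also to a cycle of the second copy is $\za$, which lies in the two copies of the unique chordless cycle $C_\za$ of $Q^0$ through $\za$.} To prove it, let $D$ be a chordless cycle of $Q$ using arrows from both copies; since the two copies meet only in $\{i,j\}$ and $D$ visits each vertex at most once, $D$ is a path from $i$ to $j$ in one copy followed by a path from $j$ to $i$ in the other. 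If neither of those paths is the single arrow $\za$, then $\za\colon i\to j$ is a chord of $D$, a contradiction; otherwise $D$ is one of the two copies of $C_\za$. Conversely, any chordless cycle $C$ of $Q^0$ stays chordless in each copy, because the full subquiver of $Q$ on the vertices of a copy of $C$ equals the full subquiver of $Q^0$ on $V(C)$ --- indeed the only arrow of $Q$ joining two vertices of different copies is $\za$, both of whose endpoints lie in $\{i,j\}$. I expect this claim to be the only real obstacle, since it is exactly where the gluing could in principle create spurious cycles or chords.

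With the claim in hand the first three assertions are quick. The quiver $Q$ is finite, connected, and loop-free; it has no $2$-cycle, since a $2$-cycle with both arrows in one copy would be a $2$-cycle of $Q^0$, while one with an arrow in each copy would have both endpoints in $\{i,j\}$ and hence force an arrow $j\to i$ in $Q^0$, contrary to hypothesis. Condition (Q1) holds because each arrow of $Q$ is a copy of an arrow of $Q^0$ (or is $\za$), hence lies in a chordless cycle. For (Q2): the claim shows that $\za$ is the only boundary arrow of $Q^0$ that becomes interior in $Q$, every other arrow keeping its boundary/interior status within its copy, so the dual graph $\calg$ of $Q$ is obtained by taking two copies of $\calg^0$, deleting from each the leaf vertex labelled $\za$, and joining the two copies of $C_\za$ by one new trunk edge labelled $\za$ (cf. the right-hand picture in Figure~\ref{fig gluing}); deleting a leaf of a tree and gluing two trees along an edge both preserve the tree property, so $\calg$ is a tree.

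Next, recall that the canonical potential of a dimer tree quiver is the signed sum $\sum_C(-1)^{d(C)}C$ over its chordless cycles, where $d$ is the distance in the dual graph from a fixed cycle; since the chordless cycles that share an arrow form a connected subtree of the dual graph and adjacent ones have distances of opposite parity, this sign pattern is the unique proper $2$-colouring of that subtree and is therefore determined by $W_{\mathrm{can}}$ up to an overall sign. Now $W^0$ realises this pattern on $\calg^0$, so $W=W^0-\zs W^0$ realises it on $\calg$: within each copy the opposite-sign rule is inherited, and across the new trunk edge the two copies of $C_\za$ carry opposite signs by the definition of $W$. Hence $W=\pm W_{\mathrm{can}}$, so $A=\textup{Jac}(Q,W)$ is a dimer tree algebra. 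The $G$-action is admissible: $\zs$ interchanges the two copies while fixing $\za$, $i$, and $j$, so it sends vertices to vertices and arrows to arrows and fixes the vertex $i$ (and $j$); moreover $\zs W=\zs W^0-W^0=-W$, and since replacing a potential by its negative does not change the Jacobian algebra, the quiver automorphism $\zs$ descends to an order-two algebra automorphism of $A$.

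Finally, the fixed vertex set $\{i,j\}$ and the fixed arrow set $\{\za\}$ are immediate from the construction, since $\zs$ permutes the non-identified vertices and arrows freely between the two copies. Because a dimer tree quiver is planar (Proposition~\ref{prop Q}) and $Q$ is, by construction, two mirror-image copies of $Q^0$ glued along $\za$, the automorphism $\zs$ is realised on $Q$ as the reflection of the plane in the line carrying the arrow $\za$; dually, $\zs$ interchanges the two copies of $\calg^0$ inside $\calg$ while fixing the new trunk edge $\za$ and swapping its endpoints $C_\za$ and $\zs C_\za$, which is precisely the reflection in the perpendicular bisector of that edge, i.e.\ the line through the midpoint of $\za$ perpendicular to $\za$.
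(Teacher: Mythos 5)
Your proof is correct and follows the paper's outline, which establishes the proposition by checking conditions (Q1), (Q2) of Definition~\ref{def Q} and reading off the $G$-action from the gluing construction. You are considerably more careful than the paper's terse argument preceding the proposition statement: your explicit bookkeeping lemma that chordless cycles of $Q$ stay within a single copy (which underlies both (Q1) and the claimed description of $\calg$), your check that no $2$-cycles arise, and your verification via proper $2$-colourings of the tree of chordless cycles that $W=W^0-\zs W^0$ agrees with the canonical potential up to sign all supply details the paper leaves implicit.
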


Conversely, suppose that $A$ is a dimer tree algebra with a nontrivial admissible $G$-action.
Suppose first that there is chordless cycle $C$ that is fixed by $\zs$, 
meaning that $\zs$ fixes every arrow and every vertex of $C$. 
Let $C'$ be another chordless cycle that shares an arrow $\zb$ with $C$. Then $\zs C'$ must be a chordless cycle that contains $\zs \zb=\zb$. But $\zb$ lies in at most two chordless cycles, and since $\zs C'\ne \zs C=C$, we must have $\zs C'=C'$.  So $\zs$ fixes every chordless cycle adjacent to $C$. Now using the fact that the dual graph is connected, we see that $\zs$ fixes every chordless cycle and thus the action of $G$ is trivial, a contradiction.
Thus $\zs$ cannot fix a chordless cycle.

Next suppose $\zs$ fixes an arrow $\za$. Then $\za$ must lie in two chordless cycles $C_\za, C_\za '$; otherwise $\zs$ would fix the unique chordless cycle containing $\za$. Thus we have $\zs C_\za=C_\za'$.  Assume now there exists another arrow $\zb$ that is fixed by $\zs$, and denote the two chordless cycles at $\zb$ by $C_\zb,C_\zb'$. Since the dual graph $\calg$ is connected, there exists a path 
$\xymatrix@C10pt{C_\za=C_1 \ar@{-}[r]&C_2\ar@{-}[r]&\cdots \ar@{-}[r]&C_t=C_\zb}$ in $\calg$. Applying $\zs$, we obtain the following cycle in $\calg$
\[\xymatrix@C10pt{C_\za \ar@{-}[d]_\za \ar@{-}[r]&C_2\ar@{-}[r]&\cdots \ar@{-}[r]&C_\zb \ar@{-}[d]^\zb\\
C_\za'  \ar@{-}[r]&\zs C_2\ar@{-}[r]&\cdots \ar@{-}[r]&C_\zb' }
\]
which is impossible, because $\calg$ is a tree. We have shown that $\zs $ fixes at most one arrow. 

Finally assume $\zs$ fixes no arrow at all.  Since our $G$-action is admissible, there exists a vertex $i$  that is fixed by $\zs$. 
 Since every arrow lies in a chordless cycle, there must be at least one incoming arrow $\za$ and one outgoing arrow $\zb$ at $i$ that both lie in the same chordless cycle $C$. Applying $\zs$, we obtain 
a second chordless cycle $\zs C$ at $i$ that contains the arrows $\zs \za $ and $\zs \zb$. 
Since $\calg$ is connected, there exists a path in $\calg$ from $C$ to $\zs C$ which we denote by 
$w\colon\xymatrix@C10pt{C\ar@{-}[r]^{\za_0}&C_1 \ar@{-}[r]^{\za_1}&\cdots \ar@{-}[r]&C_t \ar@{-}[r]^{\za_t}&\zs C}$. Applying $\zs$, we obtain a path 
$\zs w\colon\xymatrix@C10pt{\zs C\ar@{-}[r]^{\zs \za_0}&\zs C_1 \ar@{-}[r]^{\zs \za_1}&\cdots \ar@{-}[r]&\zs C_t \ar@{-}[r]^{\zs \za_t}& C}$.
Since $\calg$ is a tree, the path $\zs w$ must be the reverse of the path $w$.  Furthermore, since $\zs$ does not fix any arrows, the number of edges in $w$ must be even, otherwise the central edge would be fixed. However, since $\zs$ does not fix any chordless cycles, the number of edges in $w$ must be odd, otherwise the central chordless cycle would be fixed. This clearly is a contradiction. 

We have shown that $\zs$ fixes exactly one arrow $\za$, that $\za$ is contained in two chordless cycles $C_\za, C_\za'$ and that $\zs C_\za=C_\za'$.  Removing the edge $\za$ from the dual graph $\calg$ produces two connected components
\[ \calg\setminus \{\za\} = \calg^0\sqcup {\calg^0}'\]
one containing the point $C_\za$ and the other containing $C_\za'$. Since $\zs \calg^0$ is a connected subgraph of $\calg$ that contains $\zs C_\za$ and does not contain $\za$, we get $\zs \calg^0={\calg^0}'$.

Considering this information on the level of the quiver $Q$, we obtain two subquivers $Q^0, \zs Q^0$ of $Q$ that share the arrow $\za$ but no other arrows, and whose dual graphs are $\calg^0\cup\{\za\}$ and $\zs\calg^0\cup\{\za\}$.
 The quivers $Q^0$ and $\zs Q^0$ are isomorphic and the arrow $\za$ is a boundary arrow in both. Also both quivers satisfy the conditions (Q1) and (Q2) of Definition~\ref{def Q}. 
The restriction $W^0$ of the potential $W$ to $Q^0$ is a potential and the Jacobian algebra $A^0=\textup{Jac}(Q^0,W^0)$ is a dimer tree algebra. We have shown the following.

\begin{prop}
 \label{prop glue 2}
Let $G=\{1,\zs\}$ be the group with two elements.
 If $A=\textup{Jac}(Q,W)$ is a dimer tree algebra with a nontrivial admissible $G$-action then 
 the following holds.

 (a) $\zs$ fixes exactly one arrow $\za\colon i\to j$. 
 
 (b) There exists a subquiver $Q^0$ that is a dimer tree quiver such that  $Q=Q^0\cup \zs Q^0$ and $Q^0\cap \zs Q^0=\{\za\}$.
 
 (c) Let $e^0=\sum_{i} e_i$, where the sum is over all vertices in $Q^0$, and let $A^0=A/A(1-e^0)A$. Then $A$ is the fibered product $A^0$ with itself along $\za$.
 
(d) There is a planar embedding of $Q$ and $\calg$ such that $\zs$ acts on $Q$ by reflection at the line through the arrow  $\za$, and on $\calg$ by reflection at the line  through the midpoint of the edge $\za$ and perpendicular to $\za$.
\qed \end{prop}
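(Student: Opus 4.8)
The plan is to exploit the tree structure of the dual graph $\calg$ of $Q$ throughout, converting statements about the $\zs$-action on $Q$ into statements about the induced involution on $\calg$; all four parts are essentially combinatorial consequences of "an involution of a tree cannot do too much."

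For part~(a) I would argue in three steps. First, $\zs$ cannot fix a chordless cycle: if $\zs C=C$ and $C'$ is a chordless cycle sharing an arrow $\zb$ with $C$, then $\zs\zb=\zb$ forces $\zs C'=C'$, because $\zb$ lies in at most two chordless cycles by Proposition~\ref{prop Q} and $\zs C'\ne\zs C=C$; propagating along the connected graph $\calg$ then makes the whole action trivial, against the hypothesis. Second, $\zs$ fixes at most one arrow: a fixed arrow $\za$ must lie in two chordless cycles $C_\za,C_\za'$ that are swapped by $\zs$ (otherwise $\zs$ fixes the unique chordless cycle through $\za$), and if there were a second fixed arrow $\zb$ with chordless cycles $C_\zb,C_\zb'$, then a path in $\calg$ from $C_\za$ to $C_\zb$ glued to its $\zs$-image would close up to a cycle in the tree $\calg$, a contradiction. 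Third, $\zs$ fixes at least one arrow: admissibility provides a fixed vertex $i$, and since every arrow lies in a chordless cycle there is a chordless cycle $C$ through $i$, whence $\zs C$ is another chordless cycle through $i$. If $\zs$ fixed no arrow, a path $w$ in $\calg$ from $C$ to $\zs C$ together with $\zs w$ would force $\zs w$ to be the reversal of $w$ since $\calg$ is a tree; then the number of edges of $w$ is even (otherwise its middle edge is $\zs$-fixed) and simultaneously odd (otherwise its middle chordless cycle is $\zs$-fixed), a contradiction. Hence $\zs$ fixes exactly one arrow $\za\colon i\to j$.

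For part~(b) I would delete the fixed edge $\za$ from the tree $\calg$, obtaining two connected components $\calg^0$ and ${\calg^0}'$ containing $C_\za$ and $C_\za'$ respectively; since $\zs$ is an automorphism of $\calg$ swapping $C_\za$ and $C_\za'$, it swaps the two components, so $\zs\calg^0={\calg^0}'$. Lifting to the level of quivers, $\calg^0\cup\{\za\}$ and $\zs\calg^0\cup\{\za\}$ are the dual graphs of subquivers $Q^0$ and $\zs Q^0$ of $Q$ that share the arrow $\za$ and no other arrows, and $Q^0$ is a dimer tree quiver: condition (Q1) holds because each arrow of $Q^0$ lies in a chordless cycle of $Q$ which is entirely contained in $Q^0$ by construction, and (Q2) holds because the dual graph of $Q^0$ is $\calg^0\cup\{\za\}$, a subtree of $\calg$.

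For part~(c) I would restrict $W$ to $Q^0$ to obtain a potential $W^0$ and set $A^0=\textup{Jac}(Q^0,W^0)$, which is a dimer tree algebra by~(b); the identification $A^0\cong A/A(1-e^0)A$ holds because the partial derivatives defining $A$ that are not supported on $Q^0$ are precisely those killed modulo the complementary idempotent. The remaining point is to recognise $A$ as the fibered product of $A^0$ with itself along $\za$, i.e.\ that $W=W^0-\zs W^0$ up to sign: the chordless cycles of $Q$ are exactly those of $Q^0$ together with their $\zs$-images, and crossing the fixed edge $\za$ flips the parity of the distance function defining the canonical potential, so the canonical potential of $Q$ is indeed $W^0-\zs W^0$. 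Part~(d) then follows from the planarity of dimer tree quivers (Proposition~\ref{prop Q}) and from the explicit description of $\zs$ as the symmetry interchanging $Q^0$ and $\zs Q^0$, which must be realised geometrically by a reflection fixing the line through $\za$ on $Q$ and the perpendicular bisector of $\za$ on $\calg$. I expect the delicate step to be the third part of~(a), where the even/odd parity contradiction has to be arranged so that "middle edge" and "middle chordless cycle" of $w$ are unambiguous and the two conclusions genuinely conflict; the sign bookkeeping for the potential in~(c) is the other place that demands care.
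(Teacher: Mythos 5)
Your proposal is correct and tracks the paper's own argument closely: the paper proves this proposition in the paragraphs immediately preceding it, using exactly your three-stage scheme for part (a) (no pointwise-fixed chordless cycle, at most one fixed arrow because a second one would create a cycle in the tree $\calg$, at least one fixed arrow via the even/odd parity contradiction on the path $w$ from $C$ to $\zs C$), and then deleting the fixed edge $\za$ from $\calg$ to get the subquiver $Q^0$ for parts (b)--(d). The two places you flag as delicate -- that the middle vertex/edge of $w$ is genuinely $\zs$-fixed, and the sign bookkeeping $W=W^0-\zs W^0$ -- are indeed the only spots where the paper is terse, and your handling of both matches the paper's.
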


\subsection{$G$-action on the checkerboard polygon}
Let $A=\textup{Jac}(Q,W)$ be a dimer tree algebra of total weight $2N$ with an admissible $G$-action. Let $\calg$ be the dual graph of $Q$. 

The checkerboard polygon $\cals$ is constructed by taking the medial graph of the twisted (completed) dual graph $\widetilde \calg$, see \cite{SS3} for details. 
Recall that the twisted dual graph is obtained from the dual graph $\calg$ by twisting along every edge of $\calg$. Since the edges in $\calg\setminus \{\za\}$ come in pairs $(e,\zs e)\in \calg^0\times\zs\calg^0$, we see that after performing the twist at all edges except $\za$, we still have a graph whose $\zs$-action is given by the reflection at the line through the midpoint of the edge $\za$ and perpendicular to $\za$.  This situation is illustrated in the left picture in Figure~\ref{fig twist}.
\begin{figure}
\begin{center}
\scalebox{0.8}{
\begingroup%
  \makeatletter%
  \providecommand\color[2][]{%
    \errmessage{(Inkscape) Color is used for the text in Inkscape, but the package 'color.sty' is not loaded}%
    \renewcommand\color[2][]{}%
  }%
  \providecommand\transparent[1]{%
    \errmessage{(Inkscape) Transparency is used (non-zero) for the text in Inkscape, but the package 'transparent.sty' is not loaded}%
    \renewcommand\transparent[1]{}%
  }%
  \providecommand\rotatebox[2]{#2}%
  \newcommand*\fsize{\dimexpr\f@size pt\relax}%
  \newcommand*\lineheight[1]{\fontsize{\fsize}{#1\fsize}\selectfont}%
  \ifx\svgwidth\undefined%
    \setlength{\unitlength}{412.55492275bp}%
    \ifx\svgscale\undefined%
      \relax%
    \else%
      \setlength{\unitlength}{\unitlength * \real{\svgscale}}%
    \fi%
  \else%
    \setlength{\unitlength}{\svgwidth}%
  \fi%
  \global\let\svgwidth\undefined%
  \global\let\svgscale\undefined%
  \makeatother%
  \begin{picture}(1,0.3920689)%
    \lineheight{1}%
    \setlength\tabcolsep{0pt}%
    \put(0,0){\includegraphics[width=\unitlength,page=1]{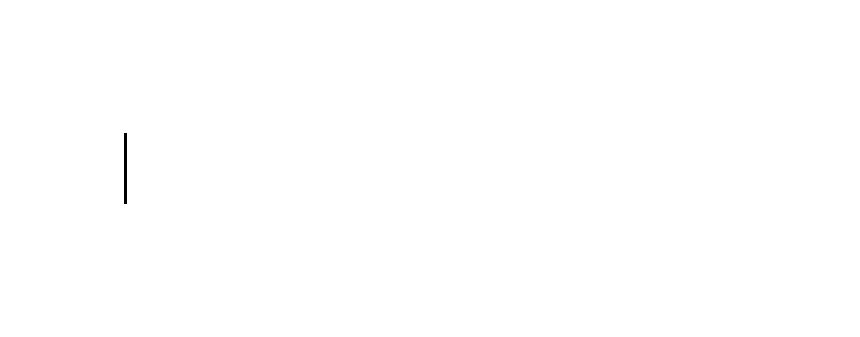}}%
    \put(0.15179103,0.18916211){\makebox(0,0)[lt]{\lineheight{1.25}\smash{\begin{tabular}[t]{l}$\za$\end{tabular}}}}%
    \put(0.21620357,0.3312188){\makebox(0,0)[lt]{\lineheight{1.25}\smash{\begin{tabular}[t]{l}$\overline\calg^0$\end{tabular}}}}%
    \put(0.21256764,0.05125599){\makebox(0,0)[lt]{\lineheight{1.25}\smash{\begin{tabular}[t]{l}$\zs \overline\calg^0$\end{tabular}}}}%
    \put(0,0){\includegraphics[width=\unitlength,page=2]{figtwist.pdf}}%
    \put(0.66081397,0.18916211){\makebox(0,0)[lt]{\lineheight{1.25}\smash{\begin{tabular}[t]{l}$\za$\end{tabular}}}}%
    \put(0.72522659,0.3312188){\makebox(0,0)[lt]{\lineheight{1.25}\smash{\begin{tabular}[t]{l}$\widetilde{\calg}^0$\end{tabular}}}}%
    \put(0.55070442,0.05125599){\makebox(0,0)[lt]{\lineheight{1.25}\smash{\begin{tabular}[t]{l}$\zs \widetilde\calg^0$\end{tabular}}}}%
    \put(0,0){\includegraphics[width=\unitlength,page=3]{figtwist.pdf}}%
  \end{picture}%
\endgroup%
}
\caption{The left picture illustrates the dual graph after twisting at every edge except at $\za$, and the right picture illustrates the resulting graph after twisting at $\za$ (this is the twisted dual graph). The action of $\zs$ on the left is by reflection, while the action on the right is by rotation.}
\label{fig twist}
\end{center}
\end{figure}
The twisted dual graph is now obtained by twisting along the edge $\za$; the result is illustrated in the right picture in Figure~\ref{fig twist}. Therefore the induced action of $\zs$ on the twisted dual graph is given by a rotation by angle $\pi$ with center the midpoint of the edge $\za$. 

On the medial graph $\cals$, the action of $\zs$ is also given by the rotation by $\pi$ with center the crossing point of the two radical lines $\rho(i)$ and $\rho(j)$, where $\za\colon i\to j$.  

Thus when we draw $\cals $ as a regular $2N$-gon with checkerboard pattern determined by the radical lines $\rho(x)$ of the medial graph, the action of $\zs$ on $Q$ induces and action on $\cals$ given by the rotation by $\pi$ at the center point of the polygon $\cals$. 
This is made precise in the following result.
\begin{prop}
 \label{prop G action on S}
 With the above notation, label the vertices of the polygon $\cals$ by $1,2,\ldots, 2N$ in clockwise order around the boundary.
 The $G$-action on the dimer tree algebra $A$ induces a $G$-action on the category of 2-diagonals $\diags$ of the checkerboard $2N$-gon $\cals$, where the action of $\zs$ is given by the rotation by angle $\pi$. If $\zg\in\diags$ is a 2-diagonal with endpoints 
$k,l$,  and $1\le k<l\le 2N$ then
 
 (a) $\zs\zg$ has endpoints $k+N, l+N\ (\textup{mod}\, 2N)$.
 
 (b) $\zs\zg=\zg$ if and only if $\zg $ is a diameter, that is $l=k+N$.
 
 (c) For the radical line $\rho(x)$, we have $\zs \rho(x)=\rho(\zs x)$.
 
 (d) Let $\za\colon i\to j$ be the unique arrow in the quiver of $A$ that is fixed by $\zs$.
  Then the radical lines $\rho(i)$ and $\rho(j)$ are fixed by $\zs$. 
 
 (e) $N$ is odd. 
 
 (f) Let $\Phi\colon \diags\to \scmp\, A$ be the equivalence of Theorem~\ref{thm SS4}. Then
 \[ \Phi(\zs\zg)=\zs\Phi(\zg),\]
 where the $\zs$ action on the right hand side is the one defined in section~\ref{Gmod}.
\end{prop}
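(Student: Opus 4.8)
The plan is to transport the symmetry $\zs$ through the chain of constructions $Q \rightsquigarrow \calg \rightsquigarrow \widetilde{\calg} \rightsquigarrow \cals$ and then along the equivalence $\Phi$ of Theorem~\ref{thm SS4}. The crucial input is already in the paragraph preceding the statement: after twisting every edge of the dual graph except the unique $\zs$-fixed edge $\za$ the action of $\zs$ is a reflection, and the last twist at $\za$ turns this reflection into the rotation by $\pi$ about the midpoint of $\za$; hence on the medial graph $\cals$ the element $\zs$ acts as the half-turn about the centre of the regular $2N$-gon. Taking this as given, parts (a) and (b) are immediate: labelling the boundary $1,\dots,2N$ clockwise, the half-turn sends vertex $k$ to $k+N \pmod{2N}$, so it sends a 2-diagonal $\zg=(k,l)$ to $(k+N,l+N)$; this is again a 2-diagonal since the parity of the difference of the endpoints is preserved, and $\zs$ is a well-defined autoequivalence of $\diags$ because a polygon rotation carries 2-pivots to 2-pivots and commutes with $\tau$. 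For (b), $\{k+N,l+N\}=\{k,l\}$ in $\mathbb{Z}/2N$ with $k\ne l$ forces $l\equiv k+N$, i.e. $\zg$ is a diameter, and conversely every diameter is fixed.

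For (c) I would argue by naturality. The assignment $x\mapsto\rho(x)$ is defined in \cite{SS3} purely from the combinatorics of the twisted dual graph $\widetilde{\calg}$ and its medial graph, so the graph automorphism induced by $\zs$ --- which by Proposition~\ref{prop glue 2} is realised compatibly on $Q$, on $\calg$, on $\widetilde{\calg}$ and on the medial graph --- must send $\rho(x)$ to $\rho(\zs x)$. Part (d) is the special case $x\in\{i,j\}$, since $\zs i=i$ and $\zs j=j$. For (e), combine (d) and (b): $\rho(i)$ is fixed by $\zs$, hence a diameter with endpoints $k,k+N$; but a radical line is a 2-diagonal, and the diameter $(k,k+N)$ is a 2-diagonal precisely when $N$ is odd. (Consistently, $\rho(i)$ and $\rho(j)$ are then two distinct diameters crossing at the centre of $\cals$, the fixed point of the half-turn.)

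For the compatibility (f) I would first record that the module automorphism $\zs$ commutes with the syzygy functor: applying the exact equivalence $\zs$ (Lemma~\ref{lem Gmod}(a)--(b)) to the projective cover (\ref{eq ses}) gives a projective cover of $\zs M$, because $\zs$ sends projectives to projectives and preserves the defining property of a projective cover; hence $\zs\,\zO_A\cong\zO_A\,\zs$ on $\scmp A$. Transporting $\zs$ across $\Phi$, the autoequivalence $\zs'=\Phi^{-1}\zs\,\Phi$ of $\diags$ has order two, satisfies $\zs'\rho(x)\cong\rho(\zs x)$ for all $x$ (by Theorem~\ref{thm SS4} together with Lemma~\ref{lem Gmod}(d)), and commutes with the clockwise rotation $R$ (since $\Phi R=\zO_A\Phi$ and $\zs\,\zO_A\cong\zO_A\,\zs$). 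By (c) the geometric half-turn on $\diags$ --- which equals $R^N$, the product of $N$ single-vertex rotations --- has exactly these same three properties, so (f) amounts to the identification $\zs'\cong R^N$, equivalently $\zO_A^{\,N}\cong\zs$ on $\scmp A$.

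The step I expect to be the main obstacle is this last identification: knowing that an autoequivalence agrees with $R^N$ on all radical lines and commutes with $R$ is not by itself obviously enough, and I would close the gap using the explicit presentation $\Phi(\zg)=\coker f_\zg$ of Theorem~\ref{thm SS4}. Since $\zs$ is an orientation-preserving rotation of $\cals$, it preserves the sign of every crossing of two canonically oriented 2-diagonals: the parity-swapping nature of $\zs$ (here is where $N$ odd enters) reverses the canonical orientation of \emph{both} 2-diagonals at a crossing, so the crossing sign is unchanged. Hence $\zs$ matches the data of right-to-left and left-to-right crossings with $\zg$ to that with $\zs\zg$, giving $\zs P_0(\zg)\cong P_0(\zs\zg)$ and $\zs P_1(\zg)\cong P_1(\zs\zg)$ via Lemma~\ref{lem Gmod}(b); since $\zs$ is exact and the map $f_\zg$ is built naturally from the crossing combinatorics in \cite{SS4}, $\zs f_\zg$ and $f_{\zs\zg}$ have isomorphic cokernels, so $\zs\Phi(\zg)=\coker(\zs f_\zg)\cong\coker f_{\zs\zg}=\Phi(\zs\zg)$, naturally in $\zg$. (If only the object-level statement is wanted, one may instead invoke that every 2-diagonal lies in the $R$-orbit of some $\rho(x)$, which is part of the description of $\scmp A$ in \cite{SS4}, and combine this with $\zs'\rho(x)\cong\rho(\zs x)$ and $\zs'R\cong R\zs'$.)
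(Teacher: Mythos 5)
Your proof is correct and follows essentially the same line of argument as the paper: parts (a)--(e) transport the rotation-by-$\pi$ action through the construction exactly as the text does, and (f) is ultimately proved by observing that $\zs$ preserves the crossing pattern of $\zg$ with the radical lines and hence $P_0(\zs\zg)\cong\zs P_0(\zg)$, $P_1(\zs\zg)\cong\zs P_1(\zg)$, which is the paper's argument (the intermediate detour through the transported autoequivalence $\zs'=\Phi^{-1}\zs\Phi$ and its comparison with $R^N$ is scaffolding you end up bypassing). One pleasant refinement is your explicit check that the crossing direction is preserved: since $\zs=R^N$ with $N$ odd swaps the parities of endpoints and hence reverses the canonical orientation of \emph{both} 2-diagonals at a crossing, the left/right crossing sign is unchanged --- a point the paper asserts without comment.
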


\begin{proof}
 (a) and (b) follow directly from the discussion above.  (c) follows from Lemma~\ref{lem Gmod}(b) and the fact that the $G$-action on $\cals$ is induced by the $G$-action on $A$. 
(d) is a direct consequence of (c). 

(b) and (d) together imply that $\rho(i)$ and $\rho(j)$ are diameters in the $2N$-gon $\cals$. So their endpoints are of the form $h,h+N$ for some $h$. But these diameters are also 2-diagonals, so the two endpoints must have opposite parity. Thus their difference $N$ must be odd. This shows (e).
 
It remains to prove (f). The functor $\Phi$ sends a 2-diagonal $\zg$ to a syzygy $M_\zg$ that is uniquely determined by the crossing pattern of $\zg$ with the radical lines in $\cals$. Indeed, let   $P_0(\zg)=\oplus_i P(i)$ and $P_1(\zg)=\oplus_j P(j)$, where the first sum is over all $i$ such that $\rho(i)$ crosses $\zg$ from right to left and the second sum is over all $j$ such that $\rho(j)$ crosses $\zg$ from left to right. 
 Then $M_\zg$ is the cokernel of a morphism $f_\zg\colon P_1(\zg)\to P_0(\zg)$. 
 
 Since $\zg$ acts on $\cals $ by rotation, the radical line $\rho (x)$ crossed $\zg$ if and only if $\zs\rho(x)$ crosses $\zs \zg$, and both crossings are in the same direction.  Hence
 \[ P_0(\zs\zg)=\oplus_i P(\zs i)=\zs P_0(\zg) \qquad 
  P_1(\zs\zg)=\oplus_j P(\zs j)=\zs P_1(\zg),
 \]
 and therefore $\Phi(\zs\zg)=M_{\zs \zg}=\zs M_\zg=\zs\Phi(\zg)$.
\end{proof}


\section{A geometric model for the syzygy category of the skew group algebra} \label{sect 5}
Throughout this section, we use the following setting. Let $G=\{1,\zs\}$ denote the group of order two, and let $A$ be a dimer tree algebra of total weight $2N$ with an admissible $G$-action. The corresponding checkerboard polygon $\cals$ has $2N$ vertices. Under the Baur-Marsh equivalence the category of 2-diagonals $\diags$ is equivalent to the 2-cluster category $\calc^2_{\mathbb A_{N-2}}$. 
In this section, we give a geometric model for the syzygy category of the skew group algebra $AG$.

\subsection{An equivalence of categories}

We have shown in section~\ref{sect 31}
that the $\zs$-action on the 2-cluster category corresponds under the Baur-Marsh equivalence $\BM_{\mathbb{A}}$ to the rotation of the polygon by angle $\pi$. On the other hand, Proposition~\ref{prop G action on S} states that this $\zs$-action on the polygon also corresponds to the $\zs$-action on the syzygy category of the dimer tree algebra under the equivalence $\Phi$. Thus there is a commutative diagram as follows.
\[\xymatrix@C50pt{
\calc^2_{\mathbb{A}_{N-2}} \ar@{<-}[r]^{\BM_\mathbb{A}}_\cong\ar[d]_\zs & \diags \ar[d]^\zs\ar[r]^\Phi_\cong
&
\scmp\, A \ar[d]^\zs 
\\
\calc^2_{\mathbb{A}_{N-2}}\ar@{<-}[r]_{\BM_\mathbb{A}}^\cong 
&\diags\ar[r]_\Phi^\cong 
&\scmp\, A
}
\] 
On each level of this diagram, we can consider the induction functor $F$ to the the skew group category following Reiten and Riedtmann \cite{RR}. Starting from the 2-cluster category of type $\mathbb{A}_{N-2}$, we obtain  the 2-cluster category of type $\mathbb{D}_{(N+1)/2}$ as seen in  Proposition~\ref{prop 2-clusters2}. On the level of $\diags$, we obtain the category $\arcp$ of 2-arcs in a punctured $N$-gon described in section~\ref{sect BMD}. 
And by  Proposition \ref{prop Fsyz},
from the syzygy category of the dimer tree algebra $A$, we obtain the syzygy category of the skew group algebra $AG$. 
All these categories are equivalent, because of the commutativity of the diagram above. 
We thus have the following result.
\begin{thm}\label{thm equivalences}
 Let $A$ be a dimer tree algebra with admissible $G$-action, and let $2N$ denote the total weight of $A$. Then there are equivalences of categories 
 \[\xymatrix@C50pt{\calc^2_{\mathbb{D}_{(N+1)/2}} &\ar[l]_{\BM_\mathbb{D}}^\cong \arc(\calp_N) \ar[r]^\Psi_\cong &\scmp \,AG}.\]
\end{thm}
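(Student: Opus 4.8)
The plan is to exploit the commutative diagram displayed just before the statement, whose two horizontal composites are the equivalences $\BM_{\mathbb A}^{-1}$ and $\Phi$ intertwining the three $\zs$-actions on $\calc^2_{\mathbb A_{N-2}}$, $\diags$, and $\scmp\,A$. The key point is that all three categories are $G$-categories and that the two equivalences are $G$-equivariant, so applying the skew group category construction of Reiten--Riedtmann simultaneously to all three yields equivalences between the resulting ``orbit'' categories. Concretely, on the left column the skew group category of $(\calc^2_{\mathbb A_{N-2}},\zs)$ is $\calc^2_{\mathbb D_{(N+1)/2}}$ by the discussion in section~\ref{sect 31} together with the Baur--Marsh equivalence $\BM_{\mathbb D}$; in the middle column the skew group category of $(\diags,\zs)$ is, by construction, the category $\arc(\calp_N)$ of 2-arcs in the punctured $N$-gon of section~\ref{sect BMD} (this is the geometric incarnation of taking $G$-orbits of 2-diagonals in the $2N$-gon, with diameters --- the $\zs$-fixed diagonals --- giving rise to the split loops $(i,i)^{\pm}$); and on the right column the skew group category of $(\scmp\,A,\zs)$ is $\scmp\,AG$, which is where Proposition~\ref{prop Fsyz} enters, since it guarantees that the induction functor $F$ and restriction functor $H$ are compatible with the syzygy operators and hence descend to functors between the stable syzygy categories.

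The steps I would carry out are: (1) verify carefully that the two squares in the displayed diagram commute up to natural isomorphism and are $\zs$-equivariant --- the right-hand square is Proposition~\ref{prop G action on S}(f), and the left-hand square is the content of section~\ref{sect 31}; (2) observe that an equivalence of $G$-categories induces an equivalence of the associated skew group categories (this is a formal consequence of the $2$-functoriality of the skew group construction, or can be checked directly on objects and morphisms using Proposition~\ref{prop RR}); (3) identify each of the three skew group categories explicitly: $\calc^2_{\mathbb D_{(N+1)/2}}$ on the left via Baur--Marsh, $\arc(\calp_N)$ in the middle, and $\scmp\,AG$ on the right; (4) assemble the resulting chain of equivalences into $\BM_{\mathbb D}$ and $\Psi$. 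For step (3) on the right, I would note that $\scmp\,AG$ is a Frobenius category whose stable category is triangulated with inverse shift $\zO_{AG}$, and use Propositions~\ref{prop RR}, \ref{prop RR2}, \ref{prop Fsyz} to show that the objects of the skew group category of $\scmp\,A$ are precisely the indecomposable non-projective syzygies over $AG$ (via the characterization (a)--(c) of section~\ref{sect CM}), and that 2-pivots and meshes/AR-triangles transport correctly.

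The main obstacle I expect is step (3) in the middle column: making precise the claim that the skew group category of $(\diags,\zs)$ \emph{is} $\arc(\calp_N)$. One must match combinatorial data on the two sides: a $\zs$-orbit $\{(k,l),(k+N,l+N)\}$ of non-diameter 2-diagonals should correspond to a 2-arc $(i,j)$ with $i\ne j$ in the punctured $N$-gon (reducing the endpoints mod $N$ and recording which side the puncture lies on --- the ``clockwise around the puncture'' convention is exactly what remembers the choice of lift), while a $\zs$-fixed 2-diagonal, i.e.\ a diameter $(h,h+N)$, has an endomorphism algebra that splits into two summands by Proposition~\ref{prop RR}(c), producing the two loop-arcs $(h,h)^{\pm}$. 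One then has to check that the 2-pivots listed in section~\ref{sect BMD} --- including the exceptional cases $j=i+3$, $j=i-2$, $j=i$ --- are exactly the images of 2-pivots of 2-diagonals under this orbit map, and similarly that the translation $\tau(i,j)=(i-2,j-2)$ of section~\ref{sect BMD} matches the induced translation on orbits (the rotation by $2\pi/N$ of $\calp$ being the orbit-shadow of the rotation by $2\pi/(2N)$ of $\cals$, now with period $N$ since $N$ is odd by Proposition~\ref{prop G action on S}(e)). Once this combinatorial dictionary is in place, the equivalences $\BM_{\mathbb D}$ and $\Psi$ follow formally, and the $3$-Calabi--Yau property is inherited since it is preserved by these equivalences.
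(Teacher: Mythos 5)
Your proposal matches the paper's argument: set up the $\zs$-equivariant commutative diagram relating $\calc^2_{\mathbb{A}_{N-2}}$, $\diags$, and $\scmp\,A$, apply the Reiten--Riedtmann skew group construction column by column, and identify the three resulting categories as $\calc^2_{\mathbb{D}_{(N+1)/2}}$, $\arc(\calp_N)$, and $\scmp\,AG$ respectively. The combinatorial verification you flag as the main obstacle --- matching $\zs$-orbits of 2-diagonals with 2-arcs (diameters splitting into $(h,h)^\pm$) and checking the 2-pivots and translations agree --- is exactly where the paper places the work, in the explicit construction of $\Psi$ and the proof of Theorem~\ref{thm Fequiv}.
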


The first equivalence is  described in Theorem~\ref{thm BM2}. We shall now give a description of the second equivalence.

We define a functor $\Psi\colon \arcp\to \scmp \,AG$ as follows.
If $\zgg=(i,j)$ is a 2-arc in the punctured $N$-gon $\calp$, then define $\zg\in \diags$  to be the diameter $\zg=(i,i+N) \textup{ if the 2-arc } \zgg=(i,i)^\pm$ is a loop, and otherwise define $\zg$ to be the 2-diagonal 
\[ \zg=\left\{ 
\begin{array}
 {ll}
(i,j) & \textup{ if } i<j;\\
(i,j+N) & \textup{ if } i>j.
\end{array}
\right.
\]

Consider the image $\Phi(\zg)\in \scmp\, A$. According to Proposition~\ref{prop G action on S} (f), we have
$ \zs \Phi(\zg)=\Phi(\zs \zg)$.
Thus if $\zg=(i,i+N)$ is a diameter then $\Phi(\zg)$ is fixed by $\zs$, 
and if $\zg=(i,j)$ is not a diameter then $\Phi(\zg)$ is not fixed by $\zs$. Applying the induction functor $F= - \otimes_A AG$ and using Proposition~\ref{prop RR}, we have
\begin{enumerate}
\item If $\zgg=(i,i)^\pm$ then $F\Phi(\zg)$ is the direct sum of two indecomposable syzygies which we denote by $\tilde M_{\zgg}^+$ and $\tilde M_{\zgg}^-$.
\item If $\zgg=(i,j)$ with $i\ne j$ then $F\Phi(\zg)$ is the direct sum of two copies of the same indecomposable syzygy which we denote by  $\tilde M_{\zgg}$.
\end{enumerate}
In case (2) above, we define $\Psi(\zgg)=\tilde M_{\zgg}$. In case (1), we need to make a choice that is consistent with the Auslander-Reiten translations. To achieve this, we label all modules in the $\tau$-orbit of $\tilde M_{\zgg}^+$ in $\scmp\, AG$ by a $+$ sign. Thus $\tau^t (\tilde M_{\zgg}^+)= (\tau^t \tilde M_{\zgg})^+ $, for all $t$. Similarly, we label the modules in  the $\tau$-orbit of $\tilde M_{\zgg}^-$ such that $\tau^t (\tilde M_{\zgg}^-)= (\tau^t \tilde M_{\zgg})^- $, for all $t$.
Since the AR translation in $\arcp$ also preserves the sign, we obtain
\[\Psi \circ\tau_{\scalebox{0.7}{$\arcp$}} =\tau_{\scalebox{0.6}{$\scmp \,AG$}}\circ \Psi.\]

Next we define $\Psi$ on irreducible morphisms. Let $\tilde g\colon \zgg\to\zdd$ be a 2-pivot in $\arcp$. We distinguish three cases depending on whether or not $\zgg,\zdd$ are loops.
\begin{enumerate}
\item Suppose first that none of the two 2-arcs is a loop. Let $\zgg=(i,j)$, then $\zdd=(i,j+2)$ or $\zdd=(i+2,j)$. 
In this situation $\tilde g$ corresponds to two 2-pivots $g\colon \zg\to \zd$ and $\zs \zg\colon \zs\zg\to\zs\zd$ in $\diags$. Thus $F\Phi(g)=F\Phi(\zs g)$ is an irreducible morphism in $\scmp \,AG$ and we let $\Psi(\tilde g) = F\Phi (g)$.
\item Let $\zgg=(i,i)^\pm$. Then $\zdd=(i+2,i)$. Thus $\zgg$ corresponds to the diameter $(i,i+N)$ in $\diags$ and $\tilde g$ corresponds to two 2-pivots, one is  $g\colon\zg\to\zd$ with $\zd=(i,i+N+2)$ and the other is $\zs g\colon\zg\to\zs\zd$ with $\zs\zd=(i+2,i+N)$.

Since $\zg$ is fixed by $\zs$ in $\cals$, we obtain that $F\Phi(g)$ is a sum of two morphisms
\[F\Phi(g)\colon \Psi\left((i,i)^+\right)\oplus \Psi\left((i,i)^-\right) \to \Psi((i+2,i)).\]
We define $\Psi (\tilde g)$ to be the first component if $\zgg=(i,i)^+$ and the second component if $\zgg=(i,i)^-$.

\item Let $\zdd=(i,i)^\pm$. Then $\zgg=(i,i-2)$. This case is dual to the previous one. Again $\tilde g\colon \zgg\to \zdd$ corresponds to two 2-pivots $g\colon \zg\to \zd$ and $\zs g\colon\zs \zg\to \zd$ in $\diags$. Since $\zd$ is fixed by $\zs$, we have  that  $F\Phi(g)$ is a sum of two morphisms
\[F\Phi(g)\colon \Psi(i,i-2)\to  \Psi\left((i,i)^+\right)\oplus \Psi\left((i,i)^-\right).\]
We define $\Psi (\tilde g)$ to be the first component if $\tilde \zd=(i,i)^+$ and the second component if $\tilde\zd=(i,i)^-$.
\end{enumerate}

\begin{thm}\label{thm Fequiv}
 The functor $\Psi$ is an equivalence of categories which makes the following diagram commute.
 \[ \xymatrix@R30pt@C50pt{\diags\ar[d]_\Phi^\cong\ar[r]^F &\arcp\ar[d]^\Psi_\cong \\
 \scmp\, A \ar[r]^F&\scmp\, AG}
 \]
\end{thm}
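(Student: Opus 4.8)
The plan is to establish that $\Psi$ is well defined, faithful, full, dense, and compatible with composition, and then deduce the commutativity of the square as a formal consequence of the constructions. First I would observe that the square commutes essentially by definition: on objects, $\Psi F(\zg) = \Psi(\zgg)$ was set to be $\tilde M_{\zgg}$, which is by construction $F\Phi(\zg)$ (in case (2)) or $F\Phi(\zg) = \tilde M_{\zgg}^+ \oplus \tilde M_{\zgg}^-$ (in case (1)), and in the latter case $F$ of the object $\zg$ in $\scmp\,A$ is precisely this direct sum by Proposition~\ref{prop RR}(c); similarly on morphisms, $\Psi(\tilde g)$ was defined to be $F\Phi(g)$ or a component thereof exactly mirroring how $F$ acts on $\scmp\,A$. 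So the real content is that $\Psi$ is a well-defined equivalence.

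The main strategy for this is to transport everything through the known equivalences. We have $\Phi\colon\diags\to\scmp\,A$ an equivalence (Theorem~\ref{thm SS4}) intertwining the two $\zs$-actions (Proposition~\ref{prop G action on S}(f)). Applying the induction functor $F$ on both sides, Proposition~\ref{prop Fsyz} tells us $F$ preserves syzygies and commutes with $\zO$, and Propositions~\ref{prop RR} and \ref{prop RR2} describe how indecomposables and almost split sequences behave. Concretely, I would argue: (i) \emph{Density.} Every indecomposable non-projective syzygy over $AG$ is a summand of $F(M)$ for some indecomposable syzygy $M$ over $A$ (since $HF(M) = M\oplus\zs M$ and $H$ is faithful, every $AG$-syzygy appears in $F$ of its restriction), hence by Proposition~\ref{prop RR}(c)--(d) is of the form $\tilde M_{\zgg}$ or $\tilde M_{\zgg}^{\pm}$, i.e.\ in the image of $\Psi$; (ii) \emph{Well-definedness and injectivity on objects.} Two 2-arcs $\zgg,\zdd$ give the same diameter or 2-diagonal $\zg,\zd$ only in the expected ways, and Proposition~\ref{prop RR}(b) says $F\Phi(\zg)\cong F\Phi(\zd)$ iff $\zg\cong\zd$ or $\zg\cong\zs\zd$ in $\diags$; tracing the dictionary $\zgg\leftrightarrow\zg$ (two 2-arcs $(i,j),(j,i)$-type collapsing under $\zs$, loops $\leftrightarrow$ diameters) shows this matches exactly the identification of 2-arcs, and the sign-labelling via $\tau$-orbits makes the loop case consistent; (iii) \emph{Fully faithfulness.} Irreducible morphisms in $\arcp$ are the 2-pivots, and by Theorem~\ref{thm BM2} the mesh relations generate all relations; one checks $\Psi$ sends the translation quiver $\zG_{\arcp}$ isomorphically onto the AR-quiver of $\scmp\,AG$ (using Proposition~\ref{prop RR2}/\ref{prop 2-clusters2}: $F$ of a mesh in $\diags$ is a sum of meshes, and the AR-quiver of $\scmp\,AG$ is obtained from that of $\scmp\,A$ by the standard ``unfolding'' along the $\zs$-action), and that $\Psi$ carries mesh relations to mesh relations, hence induces an isomorphism of mesh categories.

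For step (iii) the cleanest route is to note that both $\arcp$ and $\scmp\,AG$ are mesh categories of their respective AR-quivers — for $\arcp$ this is its definition, and for $\scmp\,AG$ it follows because $\scmp\,AG\cong\calc^2_{\mathbb D_{(N+1)/2}}$ is a standard algebraic triangulated category with the mesh property. Then it suffices to produce an isomorphism of translation quivers $\zG_{\arcp}\to\zG_{\scmp\,AG}$ compatible with $\tau$ and with polarization, which is exactly what the case-by-case definition of $\Psi$ on 2-pivots accomplishes once one verifies that each of the four types of 2-pivot in $\arcp$ maps to an irreducible morphism with the correct source, target, and multiplicity; Proposition~\ref{prop RR2} guarantees the multiplicities and the mesh structure are preserved under $F$, and Lemma~\ref{lem Gmod} handles the bookkeeping of the $\zs$-action on projectives.

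The hard part will be step (iii), specifically the compatibility of $\Psi$ with composition of non-irreducible morphisms, i.e.\ checking that $\Psi$ respects the mesh relations at the branch vertices of $\zG_{\arcp}$ — the loop vertices $(i,i)^{\pm}$ and their neighbours $(i,i-2)$, $(i+2,i)$ — where the local shape of the AR-quiver of type $\mathbb D$ differs from the ``unfolded'' shape one naively expects from type $\mathbb A$. Here the subtlety is that a single diameter $\zg$ in $\diags$ is $\zs$-fixed, so $F\Phi(\zg)$ splits into two summands $\Psi((i,i)^{\pm})$, and the two 2-pivots out of $\zg$ in $\diags$ get reorganized into the pivots $(i,i)^+\to(i+2,i)$, $(i,i)^-\to(i+2,i)$ and $(i,i-2)\to(i,i)^{\pm}$. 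I would resolve this by working directly in $\scmp\,A$: apply $F$ to the relevant mesh of $\diags$ around $\zg$, use Proposition~\ref{prop RR2}(a) to see it becomes a sum of meshes in $\scmp\,AG$, and read off that these are precisely the meshes at $(i,i)^{\pm}$ and neighbours; then the sign-consistent choice of components (made using the $\tau$-orbit labelling) guarantees the mesh relations are respected. Everything else — exactness of $F$, preservation of projectives, the object-level dictionary — is routine given the results already established.
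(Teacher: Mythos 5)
Your argument follows the same route the paper takes: commutativity of the square is by construction of $\Psi$, and $\Psi$ is then shown to be an equivalence by observing (via Propositions~\ref{prop RR2} and \ref{prop 2-clusters2}) that it carries meshes in $\arcp$ to Auslander--Reiten triangles in $\scmp\,AG$ and conversely, after which faithfulness follows from the mesh-category presentation of $\arcp$ and fullness and density follow from the restrict--lift--induce argument using $H$, $\Phi$, and $F$. The one small imprecision is in your density step: the fact you actually need is that $\tilde N$ is a direct summand of $FH(\tilde N)$, which comes from the dual of Proposition~\ref{prop RR}(a) or the two-sided adjunction of $F$ and $H$, not from $HF(M)\cong M\oplus\zs M$ together with faithfulness of $H$; the paper avoids this by proving fullness and density together with a single restriction-functor argument.
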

\begin{proof}
The commutativity of the diagram follows from our construction of $\Psi$.

 Let
 $\tilde\zeta:\xymatrix@C10pt{0\ar[r] &\tau \zgg\ar[r]^{\tilde u}& \tilde E\ar[r]^{\tilde v}&\zgg\ar[r]&0}$ be a mesh in the Auslander-Reiten quiver of $\arcp$. From the definition of $\Psi$ together with Propositions~\ref{prop RR2} and \ref{prop 2-clusters2}, we see that $\Psi(\tilde \zeta)$ is an Auslander-Reiten triangle in $\scmp \,AG$. Thus $\Psi $ is well-defined.
 
 Conversely, the same propositions show that every Auslander-Reiten triangle of $\scmp\, AG$ is the image under $\Psi$ of a mesh in $\arcp$. Since  the morphisms in $\arcp$ are generated by the 2-pivots modulo the mesh relations, this implies that $\Psi$ is faithful. 
 To show that $\Psi$ is full and dense, let $\tilde f\colon \tilde M\to \tilde N$ be a morphisms in $\scmp\, AG$. The restriction functor maps $\tilde f$ to a sum of morphism  in $\scmp\, A$ which we can lift along $\Phi$ to $\diags$. Applying $F$, we obtain a sum of morphisms in $\arcp$ and, by definition, $\Psi$ maps one of these morphisms to $\tilde f$. Thus $\Psi $ is full and dense.
\end{proof}

\section{Checkerboard pattern for the punctured polygon $\calp$}\label{sect 6}
In section~\ref{sect 5}, we have constructed an equivalence of categories $\Psi\colon\arcp\to\scmp\,AG$. In this section, we give a combinatorial construction of $\Psi$ by introducing a checkerboard pattern on the punctured polygon $\calp$. The input data for our construction is the quiver of the skew group algebra $AG$. Here we will work in the basic algebra and we start by recalling its definition from \cite{AP}.

\subsection{The basic algebra \mbox{$B=\overline{e}AG\overline{e}$}} The skew group algebra $AG$ in Theorem~\ref{thm equivalences} is not basic. Indeed, for every vertex $i$ in the quiver $Q$ of $A$ that is not fixed by $\zs$, we have $(e_i\otimes 1) AG \cong (e_{\zs i}\otimes 1)AG$. 
In \cite{AP}, Amiot and Plamondon construct in great detail a complete set of primitive orthogonal idempotents $\{e_i^{\pm}
\mid i\in Q_0\}$ for $AG$. They then specify an idempotent $\overline{e}$ such that $B=\overline{e}AG\overline{e} $ is basic. 

In order to define $\overline{e}$, they decompose the set of vertices of $Q$ as 
$Q_0=Q_0^G\sqcup I\sqcup \zs I$, where $Q_0^G$ is the set of vertices that are fixed by $\zs$ and $I$ is a complete set of representatives of the nontrivial $\zs$-orbits. Then $\overline {e} = \sum_{i\in Q_0^G} e_i^++e_i^-  + \sum_{i\in I} e_i^+ $.

Consequently the quiver $Q_B$ of $B$ can be described as follows.
The vertices of $Q_B$ are
\begin{enumerate}
\item[(i)] two vertices $i^+$ and $i^-$ for each $i\in Q_0^G$;
\item[(ii)] one vertex $i$  for each $i\in I$.
\end{enumerate}
The arrows of $Q_B$ are
\begin{enumerate}
\item[(i)] two arrows $\za^+\colon i^+\to j^+$ and $\za^-\colon i^-\to j^-$ for every arrow $\za\colon i\to j$ such that $\zs \za=\za$;
\item[(ii)] two arrows $\za^+\colon i^+\to j$, and $\za^-\colon i^-\to j$ for every arrow $\za\colon i \to j$ with $i\in Q_0^G$, $j\notin Q_0^G$;
\item[(iii)] two arrows $\za^+\colon i\to j^+$, and $\za^-\colon i\to j^-$ for every arrow $\za\colon i \to j$ with $i\notin Q_0^G$, $j\in Q_0^G$;
\item[(iv)] one arrow $\za\colon i\to j$ for each arrow $\za\colon i\to j$ with $i,j\notin Q_0^G$. 
\end{enumerate}

\bigskip
Now let $A=\textup{Jac}(Q,W)$ be a dimer tree algebra with admissible $G$-action. Let $A^0=\textup{Jac}(Q^0,W^0)$ be the dimer tree algebra with boundary arrow $\za\colon i\to j$ such that $A$ is the fibered product of $A^0$ with itself along $\za$ as in Proposition~\ref{prop glue 2}. 
Then $Q_B$ is the quiver obtained form $Q^0$ by replacing the vertices $i$ and $j$ by the vertices $i^+,i^-$ and $j^+,j^-$, 
the arrow $\za\colon i\to j$ by two arrows $\za^+\colon i^+\to j^+$ and $\za^-\colon i^-\to j^-$, and each other arrow $\zb$ starting or ending at $i$ or $j$ by two arrows $\zb^+$ and $\zb^-$ starting or ending at $i^+,i^-$ or $j^+,j^-$, see Figure~\ref{figQ0QB}.
\begin{figure}
\begin{center}
\scalebox{0.6}{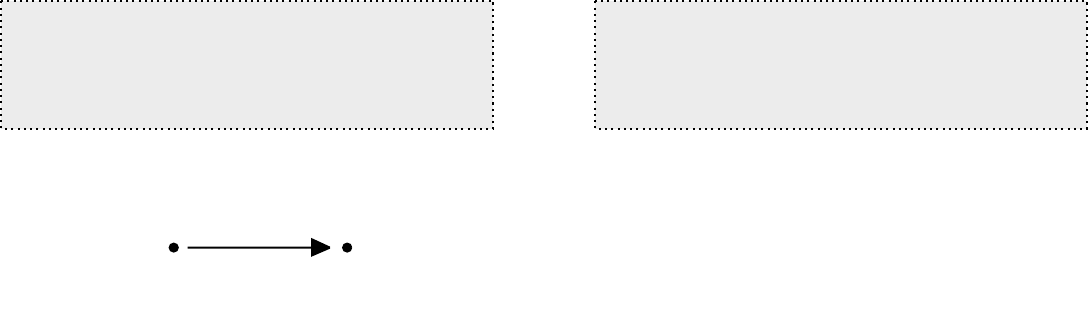}
\caption{The picture on the left illustrates the quiver $Q^0$ and the picture on the right the quiver $Q_B$.}
\label{figQ0QB}
\end{center}
\end{figure}

\subsection{Orientation and crossing of 2-arcs}
Recall that in the $2N$-gon $\cals$ the 2-diagonals $(i,j)$ are oriented from the odd endpoint towards the even endpoint. 
According to Corollary~\ref{cor typeA}(e), 
the 2-diagonals $\zg,\zd$ cross in $\cals$ if and only if $\Ext^1_{\scmp\,A} (\Phi(\zg),\Phi(\zd))\oplus \Ext^1_{\scmp\,A} (\Phi(\zd),\Phi(\zg)) \ne 0 $. More precisely 

\begin{itemize}
\item $\zg$ crosses $\zd$ from left to right if and only if $\Ext^1_{\scmp\,A} (\Phi(\zg),\Phi(\zd))\ne 0$ and 

\item $\zg$ crosses $\zd$ from right to left if and only if $\Ext^1_{\scmp\,A} (\Phi(\zd),\Phi(\zg))\ne 0$.

\end{itemize}

Given 2-diagonals $\zg,\zd$ that cross in $\cals$, we define the directed crossing number $e(\zg, \zd)$ to be 1 if $\zg$ crosses $\zd$ from left to right and 0 otherwise.  Similarly, we define $e(\zd, \zg)$ to be 1 if $\zg$ crosses $\zd$ from right to left and 0 otherwise.  Note that $e(\zg, \zd)= \textup{dim Ext}^1_{\scmp\,A}(\Phi(\zg),\Phi(\zd))$.

The corresponding notions of orientation and crossing is somewhat more complicated in the punctured polygon $\calp$. Also the crossing notion in $\calp$ is different form the one used in the theory of cluster algebras in \cite{FST}, because we are working here in the 2-cluster category and not in the ordinary cluster category. 

\subsubsection{Orientation} 
The $2N$-gon $\cals$ is a 2-fold branched cover of the punctured $N$-gon $\calp$, which is given by the rotation action of $\zs$ by angle $\pi$. \footnote{Strictly speaking we would need to replace the puncture by an orbifold point of order two. But in our combinatorial approach this will make no difference.}
The fixed point of $\zs$ is the center of $\cals$ and it corresponds to the puncture in $\calp$. We will now carefully choose a fundamental domain for the action of $\zs$ in $\cals$. 

As seen in Proposition~\ref{prop gluing 1}, 
 in the checkerboard pattern, the center of the polygon $\cals$ is given by the crossing point of the radical lines $\rho(i)$ and $\rho(j)$, that is to say, by the arrow $\za\colon i\to j$ that is fixed by $\zs$.  There are exactly two shaded regions $C$ and $\zs C$ incident to this point which correspond to the two chordless cycles that contain $\za$. Consequently, there are also exactly two white regions $W,\zs W$ incident to the point, see Figure~\ref{figfundom}  and the examples in section~\ref{sect examples}. Like any white region in a checkerboard polygon, $W$ contains either exactly one boundary point of $\cals$  or a boundary segment together with its two endpoints. In the former case, we label the sole boundary point in $W$ by 1 and in the latter case, we label the two boundary points in $W$ by $2N, 1$ in clockwise order. Then label the remaining boundary points in clockwise order. 
\begin{figure}
\begin{center}
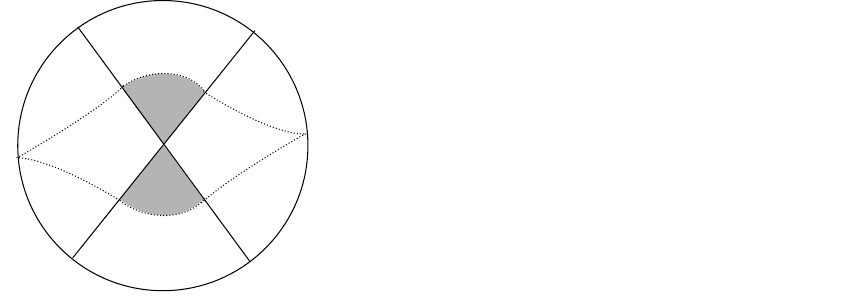
\caption{Construction of the fundamental domain.}
\label{figfundom}
\end{center}
\end{figure}

Then there exists $1<r,s< N+1$ such that the two radical lines are $\rho(i)=(r,r+N)$ and $\rho(j)=(s,s+N)$. Without loss of generality, we may assume that $r<s$ and the checkerboard regions at the center point $\za$ are $W,C,\zs W,\zs C$ in clockwise order. 
The diameter $(1,N+1)$ is not a radical line because it runs through the white region $W$. This diameter cuts the polygon into two pieces. We use the piece with boundary points $1,2,\ldots, N+1$ as  our fundamental domain for the $\zs$-action. 

This choice induces a labeling of the boundary vertices of the punctured $N$-gon by $1\cong N+1,2,\ldots, N$ in clockwise order. The radical lines $\tilde\rho(i)=(r,r)^+$ and $\tilde\rho(j)=(s,s)^+$ are both loops in $\calp$ and as usual we draw them as straight lines from their boundary point to the puncture. If the boundary point has an odd label, we orient the line towards the puncture, and otherwise, we orient the line towards the boundary point. 

There is exactly one shaded region $\tilde C$ and one white region $\tilde W$ incident to the puncture. Every other radical line $\tilde\rho(h)$ in $\calp$, lifts to  a unique radical line $\rho(h)$ in our fundamental domain in $\cals$, because no radical line can cross the white region $W$. 
We orient these radical lines in the same way as in the fundamental domain, that is, from the odd labeled endpoint towards the even labeled endpoint. 
Note that one vertex in $\calp$ carries two labels $1=N+1$. If $\rho(h)$ ends at the vertex $N+1$ in $\cals$ then $\rho(h)$ is oriented towards $N+1$ in $\cals$, because $N+1 $ is even. Consequently its image $\tilde\rho(h)\in \calp$ is oriented towards the vertex $1=N+1$ as well.
This defines an orientation on  all radical arcs in $\calp$. 

Now let $\zgg=(i,j)$ be an arbitrary 2-arc in $\calp$. Suppose first that $\zgg$ is not a loop. Let $\ell$ denote the straight line segment from the boundary point $1$ to the puncture. If $\zgg$ does not cross $\ell$, then $i<j$ and we orient $\zgg$ from its odd labeled endpoint towards its even labeled endpoint. 
If $\zgg$ crosses the line $\ell$, we label  the crossing point by $x$, and we orient the two subcurves $(i,x)$ and $(x,j)$  of $\zgg$ as follows. Note that in this case we have $i>j$ and both $i,j$ have the same parity since $N$ is odd.  We orient the ends $(i,x),(x,j)$ towards the point $x$  if $i$ and $j$ are odd, and towards $i $ and $j$ if $i$ and $j$ are even, see Example~\ref{ex71}. 

Now suppose that $\zgg=(i,i)^\pm$ is a loop. Then we orient $\zgg$ towards the puncture if $i$ is odd and towards the boundary if $i$ is even. The case where $\zgg=(1,1)^\pm$ is slightly different. If we choose a representative of $\zgg$ that lies to the left of the line segment $\ell$ then we orient it towards the puncture. This is consistent with the boundary point being labeled by the odd integer 1. If on the other hand, we choose a representative of $\zgg$ that lies to the right of $\ell$ then we orient it towards the boundary. Here we think of the endpoint being labeled by the even integer $N+1$.

\subsubsection{Crossing}
Now that we have orientated 2-arcs, we can define their directed crossings. Throughout this section we use the notation $\underline \zgg \in \zgg$ to express that the curve $\underline\zgg$ is a representative of the 2-arc $\zgg$.  Thus $\zgg$ is the homotopy class of $\underline{\zgg}$.

Let $\zgg,\zdd$ be two 2-arcs in $\calp$ and suppose first that at least one of them is not a loop. Define the directed crossing number $e(\zgg,\zdd)$ to be the minimum of the number of crossing points between $\underline \zgg\in\zgg$ and $\underline \zdd\in\zdd$ such that $\underline \zgg $ crosses $\underline \zdd$  from left to right. 
Similarly, define $e(\zdd,\zgg)$ to be the minimum of the number of crossing points between $\underline \zgg\in\zgg$ and $\underline \zdd\in\zdd$ such that $\underline \zgg $ crosses $\underline \zdd$  from  right to left.

We give several examples in Figure~\ref{fig crossing number}.
\begin{figure}
\begin{center}
\scriptsize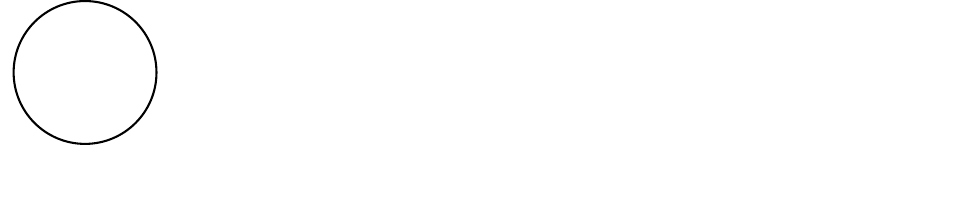
\caption{Examples of crossing numbers between 2-arcs in the punctured polygon.}
\label{fig crossing number}
\end{center}
\end{figure}

 Now suppose that both $\zgg=(x,x)^\pm$ and $\zdd=(y,y)^\pm$ are loops in $\cals$. Then we define the crossing numbers as follows.
 \[
\begin{array}
 {rcl}
 e\left( (x,x)^+, \zdd\right) &=&\left\{
\begin{array}
 {ll}
 1 &\textup{if $\zdd\in\{(x-2,x-2)^+,(x-4,x-4)^-,(x-6,x-6)^+,(x-8,x-8)^-,$}\\
 &\ldots, (x+1,x+1)^\pm\} ; \\[5pt]
 0 & \textup{if $\zdd$ is any other loop.}
\end{array}\right. \\
\\
e\left( (x,x)^-, \zdd\right) &=&\left\{
\begin{array}
 {ll}
 1 &\textup{if $\zdd\in\{(x-2,x-2)^-,(x-4,x-4)^+,(x-6,x-6)^-,(x-8,x-8)^+,$}\\
 &\ldots, (x+1,x+1)^\mp\} ; \\[5pt]
 0 & \textup{if $\zdd$ is any other loop.}
\end{array}\right. \\
\\
e\left(\zdd, (x,x)^+\right) &=&\left\{
\begin{array}
 {ll}
 1 &\textup{if $\zdd\in\{(x+2,x+2)^+,(x+4,x+4)^-,(x+6,x+6)^+,(x+8,x+8)^-,$}\\
 &\ldots, (x-1,x-1)^\pm\} ; \\[5pt]
 0 & \textup{if $\zdd$ is any other loop.}
\end{array}\right. \\
\\
e\left(\zdd, (x,x)^-\right) &=&\left\{
\begin{array}
 {ll}
 1 &\textup{if $\zdd\in\{(x+2,x+2)^-,(x+4,x+4)^+,(x+6,x+6)^-,(x+8,x+8)^+,$}\\
 &\ldots, (x-1,x-1)^\mp\} ; \\[5pt]
 0 & \textup{if $\zdd$ is any other loop.}
\end{array}\right. \\
\end{array}
 \]

\begin{example}\label{ex62}
 In Example~\ref{ex71}, the radical line $\tilde \rho(1^+)$ is given by the loop $(3,3)^+$. If $\zg$ is a loop, then we have
 \[e((3,3)^+,\zg) = 1 \quad \textup{ if and only if } \quad \zg=(4,4)^+,(6,6)^-,(1,1)^+;\]
  \[e(\zg,(3,3)^+) = 1\quad  \textup{ if and only if }\quad  \zg=(5,5)^+,(7,7)^-,(2,2)^+.\]
\end{example}

\begin{prop}
 \label{lem crossing ext}
 Let $\zgg,\zdd$ be two 2-arcs  $\arcp$ and $\BM_\mathbb{D}\colon \arcp\to\calc^2_{\mathbb{D}}$ the equivalence of Theorem~\ref{thm BM2}. Then
 \[\dim \textup{Ext}^1_{\calc^2_{\mathbb{D}}} (\BM_\mathbb{D}(\zgg),\BM_\mathbb{D}(\zdd)) = e(\zgg,\zdd).\]
\end{prop}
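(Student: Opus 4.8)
The plan is to reduce the statement to its Dynkin type $\mathbb{A}$ analogue — the identity $e(\zg,\zd)=\dim\Ext^1_{\scmp\,A}(\Phi(\zg),\Phi(\zd))$ for $2$-diagonals of $\cals$ recorded in Corollary~\ref{cor typeA}(e) and (in directed form) earlier in this section — by transporting it through the induction functor $F$, and then to read off the answer from the $2$-fold branched covering $\cals\to\calp$. First I would pass to $\scmp\,AG$: since $\BM_\mathbb{D}$ and $\Psi$ are equivalences compatible with the triangulated structure and with $\tau$, and both ambient categories are $3$-Calabi--Yau (so $\Ext^1(X,Y)\cong D\Hom(Y,\tau X)$), it is enough to prove $\dim\Ext^1_{\scmp\,AG}(\Psi\zgg,\Psi\zdd)=e(\zgg,\zdd)$. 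Because $A\hookrightarrow AG$ is a Frobenius extension ($|G|=2$ is invertible in $\kb$), $F$ is left adjoint to $H$; moreover $F,H$ are triangle functors commuting with $\zO$ (Proposition~\ref{prop Fsyz}), $HF(Y)\cong Y\oplus\zs Y$ (Proposition~\ref{prop RR}), $\zs$ is a triangle auto-equivalence of $\scmp\,A$ (Lemma~\ref{lem Gmod}), and $\zs\Phi(\zg)=\Phi(\zs\zg)$ (Proposition~\ref{prop G action on S}(f)). Chaining these facts yields, for all $2$-diagonals $\zg,\zd$ of $\cals$, the master formula
\begin{equation}\label{eq:masterD}
\dim\Ext^1_{\scmp\,AG}\bigl(F\Phi(\zg),F\Phi(\zd)\bigr)=e(\zg,\zd)+e(\zg,\zs\zd).
\end{equation}

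\textbf{Non-loop case.} If neither $\zgg$ nor $\zdd$ is a loop, let $\zg,\zd\in\diags$ be their lifts; these are not diameters, so $\Phi(\zg)\not\cong\zs\Phi(\zg)$ and $\Psi\zgg=F\Phi(\zg)$, $\Psi\zdd=F\Phi(\zd)$ are indecomposable by Proposition~\ref{prop RR}(d). Hence by \eqref{eq:masterD} it remains to prove the purely geometric identity $e(\zgg,\zdd)=e(\zg,\zd)+e(\zg,\zs\zd)$. This says that, under the rotation-by-$\pi$ covering $\cals\to\calp$ (unramified away from the centre $=$ puncture, which no $2$-arc meets), every directed crossing of $\underline\zgg$ with $\underline\zdd$ downstairs lifts to exactly one directed crossing of the chosen lift $\underline\zg$ with the full preimage $\underline\zd\cup\underline{\zs\zd}$ of $\underline\zdd$, and conversely; ``directed'' is preserved because the orientation rules for $2$-arcs in Section~\ref{sect 6} were set up precisely as the push-forward of the orientation of $2$-diagonals — this is exactly where the delicate conventions near the cut $\ell$ enter.

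\textbf{Loop cases.} If $\zdd=(i,i)^\pm$ and $\zgg$ is not a loop (the other mixed case being symmetric), its lift is the diameter $\zeta=(i,i+N)$, which is $\zs$-fixed, and $F\Phi(\zeta)=\Psi((i,i)^+)\oplus\Psi((i,i)^-)$; then \eqref{eq:masterD} together with $\zs\zeta=\zeta$ gives $\sum_{\ze=\pm}\dim\Ext^1_{\scmp\,AG}(\Psi\zgg,\Psi((i,i)^{\ze}))=2e(\zg,\zeta)$. The relabelling $(i,i)^+\leftrightarrow(i,i)^-$ (for all $i$) is an automorphism of the translation quiver $\zG_{\arcp}$ that fixes every non-loop $2$-arc, and the $\pm$-labels on $\Psi((i,i)^\pm)$ were chosen $\tau$-compatibly; together these force the two summands on the left to have equal dimension $e(\zg,\zeta)$, and a short check (both $(i,i)^\pm$ lift to $\zeta$, and $\underline\zgg$ misses the puncture) identifies $e(\zgg,(i,i)^\pm)=e(\zg,\zeta)$. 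Finally, if both $\zgg=(x,x)^\pm$ and $\zdd=(y,y)^\pm$ are loops, their lifts are the diameters $\zeta_x,\zeta_y$, which cross exactly once unless they coincide, so \eqref{eq:masterD} makes $\sum_{\ze,\ze'}\dim\Ext^1_{\scmp\,AG}(\Psi((x,x)^{\ze}),\Psi((y,y)^{\ze'}))=2e(\zeta_x,\zeta_y)$; the $\pm$-swap automorphism and the $\tau$-consistency of the labels pin down the individual terms, which one then matches against the explicit loop-crossing numbers displayed above. That last matching is elementary: since $N$ is odd, the even offsets $2,4,\dots$ run through all residues modulo $N$ with alternating $\pm$-parity, which is exactly the content of those formulas.

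I expect the real difficulty to be the bookkeeping in the loop cases: three $\pm$-decorations are simultaneously in play — the geometric tag on a loop $2$-arc in $\calp$, the labelling of the two indecomposable summands of $F\Phi(\text{diameter})$, and the $\tau$-orbit labelling used to define $\Psi$ on loops — and one must verify they are globally and consistently aligned, together with the compatibility of the orientation conventions near $\ell$ with the covering map $\cals\to\calp$. The non-loop case, by contrast, is essentially immediate from \eqref{eq:masterD} and the covering lemma; the only other point worth a remark is that all the $\Ext^1$-spaces occurring are at most two-dimensional, a bound inherited from the type-$\mathbb{A}$ side through \eqref{eq:masterD}.
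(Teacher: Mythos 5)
Your approach is genuinely different from the paper's. The paper's proof of Proposition~\ref{lem crossing ext} is a one-line direct inspection: one reads the support of $\Ext^1$ off the Auslander--Reiten quiver of $\calc^2_{\mathbb{D}}$ (labelled by $2$-arcs as in Figure~\ref{fig AR 2cluster BM}) and compares it with the crossing-number formulas of Section~\ref{sect 6}, which were essentially written down to match. You instead reduce to the type~$\mathbb{A}$ statement through the induction functor. Your master formula $\dim\Ext^1_{\scmp\,AG}(F\Phi\zg,F\Phi\zd)=e(\zg,\zd)+e(\zg,\zs\zd)$ is correct: it follows from the $F\dashv H$ adjunction (valid since $A\subset AG$ is a separable, hence Frobenius, extension when $\mathrm{char}\,\kb\neq 2$), the isomorphism $HF\cong 1\oplus\zs$ from Proposition~\ref{prop RR}(a), the commutation of $F,H$ with $\zO$ from Proposition~\ref{prop Fsyz}, and $\zs\Phi=\Phi\zs$ from Proposition~\ref{prop G action on S}(f); and the reduction from $\BM_\mathbb{D}$ to $\Psi$ is justified exactly as you say via $\Ext^1(X,Y)\cong D\Hom(Y,\tau X)$, so one does not even need $\Psi\circ\BM_\mathbb{D}^{-1}$ to be a triangle functor, only to commute with $\tau$. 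The non-loop case then closes cleanly once you accept the covering identity $e(\zgg,\zdd)=e(\zg,\zd)+e(\zg,\zs\zd)$, which is exactly Proposition~\ref{prop intersection number}(a) and is proved there by an independent geometric argument (no circularity arises, since the paper's proof of Proposition~\ref{prop intersection number}(a)--(c) does not cite Proposition~\ref{lem crossing ext}).

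Where the proposal falls short is precisely where you flag it: the loop cases. The $\pm$-swap automorphism of $\zG_{\arcp}$ does show that the two summands in the mixed case have equal $\Ext$-dimension, and together with Proposition~\ref{prop intersection number}(b) that settles the mixed case. But in the loop--loop case the $\pm$-swap only pairs the four $\Ext$-dimensions into two equal pairs summing to $2e(\zeta_x,\zeta_y)$; deciding which pair is $(1,1)$ and which is $(0,0)$ requires tracing the alternating-sign pattern $(x{-}2)^+,(x{-}4)^-,(x{-}6)^+,\dots$ in the crossing formulas against the actual $\Ext$-hammock across the two exceptional $\tau$-orbits. Your assertion that ``that last matching is elementary'' is doing all the work here and is not justified; carried out honestly, this step is nothing other than the paper's direct inspection of the AR-quiver of $\calc^2_{\mathbb{D}}$. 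There is also the hidden issue that the $\pm$-labelling used to define $\Psi$ on loops is itself only fixed up to the global $\pm$-swap, so the alignment between that choice, the crossing-formula $\pm$-tags, and $\BM_\mathbb{D}$'s labelling must be checked, not assumed. Finally, a smaller logical gap: your argument presupposes the existence of a dimer tree algebra $A$ of total weight $2N$ admitting an admissible $G$-action, for the given $N$. The statement of Proposition~\ref{lem crossing ext} (and the paper's direct-inspection proof) is purely about $\arcp$ and $\calc^2_{\mathbb{D}}$ and holds for every odd $N\geq 7$; a proof routed through $\scmp\,AG$ is a priori conditional on such an $A$ existing, and you neither note this nor construct one. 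In short, the reduction via the master formula is an appealing conceptual reorganisation, but it does not actually avoid the case analysis the paper performs, and as written the loop--loop subcase is unproved.
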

\begin{proof}
 This follows by direct inspection of the support of $\Ext^1$ in the AR-quiver of the 2-cluster category using the labels by the 2-arcs as in Figure~\ref{fig AR 2cluster BM}.
\end{proof}

The group $G$ acts on the set of 2-arcs in $\calp$ by the formula
$\zs \tilde \zg=\tilde \zg$ if $\tilde \zg$ is not a loop, and $\zs (i,i)^\pm =(i,i)^\mp$ if $\tilde \zg=(i,i)^\pm$ is a loop. 
If $\za$ is a 2-arc in $\calp$ or a 2-diagonal in $\cals$ we let $G\za=\{\za,\zs\za\}$ denote the $G$-orbit of  $\za$. The following result describes the relation between the intersection numbers in $\calp$ and $\cals$.

\begin{prop}\label{prop intersection number}
 Let $\tilde \zg,\tilde \zd$ be two 2-arcs in $\calp$ with lifts $\zg,\zd$ in $\cals$. Then
 \begin{equation}
\label{eq intersection number}
 \sum_{\tilde\zg'\in G\tilde \zg} e(\tilde \zg',\tilde\zd) = \sum_{\zd'\in G\zd} e(\zg, \zd').
\end{equation}
That is

(a) $e(\tilde\zg,\tilde\zd) = e(\zg,\zd)+ e(\zg,\zs \zd)$ if  $\tilde \zg $ is not a loop and $\tilde \zd$ is not a loop. 

(b) $e(\tilde\zg,\tilde\zd) = e(\zg,\zd)$ if  $\tilde \zg $ is not a loop and $\tilde \zd$ is a loop.

(c) $e(\tilde\zg,\tilde\zd) +e(\zs \tilde\zg,\tilde\zd)  = e(\zg,\zd)+ e(\zg,\zs \zd)$ if  $\tilde \zg $ is  a loop and $\tilde \zd$ is not a loop.  

(d) $e(\tilde\zg,\tilde\zd) +e(\zs \tilde\zg,\tilde\zd)  = e(\zg,\zd)$ if  $\tilde \zg $ is  a loop and $\tilde \zd$ is  a loop.  
\end{prop}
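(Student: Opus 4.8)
The plan is to convert every directed crossing number into the dimension of an $\Ext^{1}$-space in the appropriate stable syzygy category and then to exploit the adjunction between the induction functor $F$ and the restriction functor $H$. Two dictionaries are available. First, the crossing--extension identity recorded just before Proposition~\ref{lem crossing ext} gives $e(\zg,\zd)=\dim\Ext^{1}_{\scmp\,A}(\Phi(\zg),\Phi(\zd))$, where $\Ext^{1}$ in a triangulated category means $\Hom(-,(-)[1])$ with $[1]=\zO^{-1}$. Second, combining Proposition~\ref{lem crossing ext} with the equivalence $\Psi\circ\BM_\mathbb{D}^{-1}\colon\calc^2_{\mathbb{D}}\to\scmp\,AG$ supplied by Theorem~\ref{thm equivalences} (which sends $\BM_\mathbb{D}(\tilde\zg)$ to $\Psi(\tilde\zg)$) yields $e(\tilde\zg,\tilde\zd)=\dim\Ext^{1}_{\scmp\,AG}(\Psi(\tilde\zg),\Psi(\tilde\zd))$. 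Since $F$ and $H$ are exact, preserve projectives, and commute with $\zO$ by Proposition~\ref{prop Fsyz}, they induce triangle functors on the stable syzygy categories, and the adjunction $F\dashv H$ persists there; hence
\[\Ext^{1}_{\scmp\,AG}(FM,N)\;\cong\;\Ext^{1}_{\scmp\,A}(M,HN).\]

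\textbf{Two structural identities.} I would next record the following. (i) $\bigoplus_{\tilde\zg'\in G\tilde\zg}\Psi(\tilde\zg')\cong F(\Phi(\zg))$: if $\tilde\zg$ is not a loop then $G\tilde\zg=\{\tilde\zg\}$ and $\zg$ is not a diameter, so $\Phi(\zg)\not\cong\zs\Phi(\zg)$ by Proposition~\ref{prop G action on S}(f); thus $F\Phi(\zg)$ is indecomposable by Proposition~\ref{prop RR}(d) and equals $\Psi(\tilde\zg)$ by the construction of $\Psi$; if $\tilde\zg=(i,i)^{\pm}$ is a loop then $\zg=(i,i+N)$ is a diameter, $\Phi(\zg)$ is $\zs$-fixed, and $F\Phi(\zg)=\Psi((i,i)^{+})\oplus\Psi((i,i)^{-})$ by Proposition~\ref{prop RR}(c) and the construction of $\Psi$. (ii) $H(\Psi(\tilde\zd))\cong\bigoplus_{\zd'\in G\zd}\Phi(\zd')$: using $HF(X)\cong X\oplus\zs X$ (Proposition~\ref{prop RR}(a)) and $\Phi(\zs\zd)=\zs\Phi(\zd)$ (Proposition~\ref{prop G action on S}(f)) we get $HF\Phi(\zd)\cong\Phi(\zd)\oplus\Phi(\zs\zd)$; if $\tilde\zd$ is not a loop then $\zd$ is not a diameter, $\Psi(\tilde\zd)=F\Phi(\zd)$, and this is $\Phi(\zd)\oplus\Phi(\zs\zd)=\bigoplus_{\zd'\in G\zd}\Phi(\zd')$; if $\tilde\zd=(i,i)^{\pm}$ is a loop then $\zd$ is a diameter, $\Phi(\zd)\cong\Phi(\zs\zd)$, so $H\Psi((i,i)^{+})\oplus H\Psi((i,i)^{-})\cong\Phi(\zd)^{\oplus2}$, and since each $H\Psi((i,i)^{\pm})$ is a nonzero $A$-module, Krull--Schmidt forces $H\Psi((i,i)^{\pm})\cong\Phi(\zd)$, which is $\bigoplus_{\zd'\in G\zd}\Phi(\zd')$ as $G\zd=\{\zd\}$.

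\textbf{Combining.} Using additivity of $\Ext^{1}$ in the first variable, identity (i), the adjunction, identity (ii), and additivity in the second variable, I get
\[\sum_{\tilde\zg'\in G\tilde\zg}e(\tilde\zg',\tilde\zd)=\dim\Ext^{1}_{\scmp\,AG}\!\Big(\bigoplus_{\tilde\zg'\in G\tilde\zg}\Psi(\tilde\zg'),\Psi(\tilde\zd)\Big)=\dim\Ext^{1}_{\scmp\,AG}(F\Phi(\zg),\Psi(\tilde\zd))\]
\[=\dim\Ext^{1}_{\scmp\,A}(\Phi(\zg),H\Psi(\tilde\zd))=\sum_{\zd'\in G\zd}\dim\Ext^{1}_{\scmp\,A}(\Phi(\zg),\Phi(\zd'))=\sum_{\zd'\in G\zd}e(\zg,\zd'),\]
which is exactly \eqref{eq intersection number}. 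Parts (a)--(d) then follow by spelling out the orbits: for a non-loop the orbit is a singleton and the corresponding lift is a non-diameter (with $G\zd=\{\zd,\zs\zd\}$, two distinct elements), while for a loop $(i,i)^{\pm}$ the orbit is $\{(i,i)^{+},(i,i)^{-}\}$ and the corresponding lift is a diameter (with $G\zd=\{\zd\}$); substituting these into the displayed equation gives the four stated formulas.

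\textbf{Main obstacle.} The formal computation is short; the work is in justifying its inputs. The first delicate point is that the adjunction $F\dashv H$ genuinely descends to the stable syzygy categories and computes the same $\Ext^{1}$ groups that appear, after transport along $\Phi,\Psi,\BM_\mathbb{D}$, in Proposition~\ref{lem crossing ext} and in the crossing--extension identity for $\scmp\,A$; this needs $F,H$ to commute with $\zO$ (Proposition~\ref{prop Fsyz}) and a careful check of the triangulated structures. The second is the Krull--Schmidt bookkeeping behind identities (i) and (ii), where one must keep straight the loop/non-loop dichotomy and the $\pm$ labelling conventions fixed in the definition of $\Psi$, and handle the $\zs$-fixed (diameter) case separately. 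Everything else is a direct application of Propositions~\ref{prop RR}, \ref{prop G action on S}, and \ref{lem crossing ext}.
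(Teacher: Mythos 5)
Your proof is correct, but it takes a genuinely different route from the paper. The paper proves Proposition~\ref{prop intersection number} by a direct combinatorial argument: parts (a) and (b) are read off from the fact that $\cals\to\calp$ is a two-fold branched cover under the rotation $\zs$, part (c) is checked by noting that a diameter $\zg$ crosses $\zd$ if and only if it crosses $\zs\zd$, and part (d) is verified from the explicit loop-crossing table (as in Example~\ref{ex62}). Your proof instead translates all crossing numbers into dimensions of $\Ext^1$-groups via the equivalences $\Phi$ and $\Psi\circ\BM_{\mathbb D}^{-1}$, uses the decompositions $\bigoplus_{\tilde\zg'\in G\tilde\zg}\Psi(\tilde\zg')\cong F\Phi(\zg)$ and $H\Psi(\tilde\zd)\cong\bigoplus_{\zd'\in G\zd}\Phi(\zd')$, and closes the loop with the stable adjunction $\Ext^1_{\scmp AG}(F-,-)\cong\Ext^1_{\scmp A}(-,H-)$. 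Each of these steps is sound: identity (i) is as you argue (and in fact this silently corrects what reads as a slip in the paper's section~\ref{sect 5}, where case~(2) of the construction of $\Psi$ says $F\Phi(\zg)$ is two copies of an indecomposable, though Proposition~\ref{prop RR}(d) forces it to be indecomposable); identity (ii) is a straightforward Krull--Schmidt argument via Proposition~\ref{prop RR}(a); and the adjunction does descend to the stable syzygy categories, since $F$ and $H$ are exact, preserve projectives, and commute with $\zO$ by Proposition~\ref{prop Fsyz}.

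What the paper's route buys that yours does not: it verifies from scratch that the fairly delicate combinatorial definitions of directed crossing numbers on $\calp$ (especially the loop--loop table) are the right ones, without appealing to the global categorical machinery; this serves as a genuine consistency check of Section~\ref{sect 6}. What your route buys: it reveals that Proposition~\ref{prop intersection number} is a formal consequence of the induction/restriction formalism of Reiten--Riedtmann together with the extension--crossing dictionaries, and it uniformly handles all four cases at once, with (a)--(d) falling out by unwinding the $G$-orbits. The one place your write-up stops short of a full argument is the assertion that the Morita-stable adjunction computes the same $\Ext^1$ groups appearing in Proposition~\ref{lem crossing ext}: this needs that $\Psi\circ\BM_{\mathbb D}^{-1}$ respects the triangulated structure (or at least the shift), which the paper does not establish explicitly at the point where Proposition~\ref{prop intersection number} is stated (the rotation $R$ on $\arcp$ and the identity $\Psi\circ R=\zO\circ\Psi$ only appear in Theorem~\ref{thm checkerboard}, which in turn uses Proposition~\ref{prop intersection number}). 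This is not a circularity in your argument, since one can instead invoke the AR-quiver computation of $\Ext^1$ in both finite-type triangulated categories, but it is a gap you would need to fill rather than leave as ``a careful check of the triangulated structures''.
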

\begin{proof} First note that if $\tilde\zd$ is a loop in $\calp$ then its lift $\zd$ is a diameter in $\cals$ and hence $G\zd=\{\zd\}$. Therefore, equation (\ref{eq intersection number}) is equivalent to the four statements (a)-(d).
Parts (a) and (b) follow directly from the fact that the $2N$-gon $\cals$ is a 2-fold branched cover of the punctured $N$-gon $\calp$ under the action of the rotation $\zs$ by angle $\pi$.  

To show (c), suppose that $\tilde\zg$ is a loop in $\calp$. Thus $\zg$ is a diameter in $\cals$. If $\tilde \zd$ is not a loop, then $\tilde \zg$ crosses $\tilde\zd$ if and only if $\zg$ crosses $\zd$. Moreover, in that case, $\zg $ also crosses $\zs\zd$. Thus $e(\tilde \zg,\tilde\zd) = e(\zg,\zd)=e(\zg,\zs\zd)$. Since $\zs\tilde\zg$ is the same loop as $\tilde \zg$ but with the opposite sign, the same equation holds when replacing $\tilde \zg$ with $\zs\tilde\zg$.  The two equations together prove part (c). 

Finally, assume both $\tilde\zg,\tilde \zd$ are loops. 
Then the lifts $\zg,\zd$ are both diameters. Thus $\zg$ and $\zd$ cross unless they are equal. The crossing number of $\tilde \zg$ and $\tilde\zd$ depends on the signs of the loops. As we see from Example~\ref{ex62}, $\tilde\zg$ will cross exactly one of $\tilde\zd,\zs\tilde \zd$, unless $\zg=\zd$, and $\zs\tilde\zg$ will cross exactly the other. This shows part (d).
\end{proof}

We also obtain a dual statement of Proposition~\ref{prop intersection number}.

\begin{prop}\label{prop intersection number2}
 Let $\tilde \zg,\tilde \zd$ be two 2-arcs in $\calp$ with lifts $\zg,\zd$ in $\cals$. Then
 \begin{equation}
\label{eq intersection number2}
 \sum_{\tilde\zd'\in G\tilde \zd} e(\tilde \zg,\tilde\zd') = \sum_{\zg'\in G\zg} e(\zg', \zd).
\end{equation}
\end{prop}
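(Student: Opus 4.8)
The plan is to deduce \eqref{eq intersection number2} from Proposition~\ref{prop intersection number} by running the ``transposed'' version of the same case analysis, the only new ingredient being the $\zs$-equivariance of the directed crossing numbers. First I would record this equivariance in both polygons: for any two $2$-diagonals $\zg,\zd$ in $\cals$ one has $e(\zs\zg,\zd)=e(\zg,\zs\zd)$, and for any two $2$-arcs $\tilde\zg,\tilde\zd$ in $\calp$ one has $e(\zs\tilde\zg,\tilde\zd)=e(\tilde\zg,\zs\tilde\zd)$. The quickest derivation uses $e(\zg,\zd)=\dim\Ext^1_{\scmp A}(\Phi\zg,\Phi\zd)$ (stated before Proposition~\ref{lem crossing ext}, cf.\ Corollary~\ref{cor typeA}(e)) together with the fact that $\zs$ is a triangulated autoequivalence of $\scmp A$ with $\zs\Phi=\Phi\zs$ (Lemma~\ref{lem Gmod}, Proposition~\ref{prop G action on S}(f)): this gives $e(\zs\zg,\zs\zd)=e(\zg,\zd)$, and since $\zs^2=\id$ the claimed identity follows; the $\calp$-statement follows the same way via Proposition~\ref{lem crossing ext}, and in the cases involving loops it can also be read off directly from the crossing formulas (the two lists computed in Example~\ref{ex62} are interchanged by the tag reversal $(i,i)^\pm\mapsto(i,i)^\mp$). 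Geometrically, the $\cals$-identity is just the statement that the rotation $\zs$ by angle $\pi$ is orientation-preserving on the disk: although $\zs$ reverses the combinatorial (odd-to-even) orientation of every $2$-diagonal because $N$ is odd, it does so for both diagonals simultaneously, so the chirality of the crossing, and hence whether it is left-to-right or right-to-left, is unchanged.

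Next I would unpack \eqref{eq intersection number2}, using that $\tilde\zd$ is a loop precisely when its lift $\zd$ is a diameter, in which case $G\zd=\{\zd\}$, whereas if $\tilde\zd$ is not a loop then $G\tilde\zd=\{\tilde\zd\}$ and $G\zd=\{\zd,\zs\zd\}$, and similarly for $\tilde\zg,\zg$. This splits the claim into the four cases according to whether $\tilde\zg,\tilde\zd$ are loops. When neither is a loop, $e(\tilde\zg,\tilde\zd)=e(\zg,\zd)+e(\zg,\zs\zd)=e(\zg,\zd)+e(\zs\zg,\zd)$ by Proposition~\ref{prop intersection number}(a) and the equivariance. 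When $\tilde\zd$ is a loop and $\tilde\zg$ is not, both elements of $G\tilde\zd$ have the same diameter lift $\zd$, so by Proposition~\ref{prop intersection number}(b) the left-hand side is $2e(\zg,\zd)$, while $\zs\zd=\zd$ gives $e(\zs\zg,\zd)=e(\zg,\zs\zd)=e(\zg,\zd)$, so the right-hand side is also $2e(\zg,\zd)$. When $\tilde\zg$ is a loop and $\tilde\zd$ is not, the proof of Proposition~\ref{prop intersection number}(c) shows directly that $e(\tilde\zg,\tilde\zd)=e(\zg,\zd)$, which is the right-hand side since $G\zg=\{\zg\}$. When both are loops, writing $G\tilde\zd=\{\tilde\zd,\zs\tilde\zd\}$ and using the $\calp$-equivariance $e(\tilde\zg,\zs\tilde\zd)=e(\zs\tilde\zg,\tilde\zd)$, the left-hand side equals $e(\tilde\zg,\tilde\zd)+e(\zs\tilde\zg,\tilde\zd)=e(\zg,\zd)$ by Proposition~\ref{prop intersection number}(d), which again is the right-hand side.

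The step I expect to be the main obstacle is getting the orientation bookkeeping in the equivariance lemma exactly right --- in particular making sure the parity flip forced by $N$ being odd yields $e(\zs\zg,\zd)=e(\zg,\zs\zd)$ rather than, say, $e(\zs\zg,\zd)=e(\zd,\zg)$ --- and checking that in the loop-versus-loop case the crossing formulas behave so that Proposition~\ref{prop intersection number}(d), which sums over the $G$-orbit of the \emph{other} argument, still applies; this is exactly where the $\pm$ tags interact with the branched cover $\cals\to\calp$. A more formal alternative avoids the geometry: using $e(\tilde\zg,\tilde\zd')=\dim\Ext^1_{\scmp AG}(\Psi\tilde\zg,\Psi\tilde\zd')$ (Proposition~\ref{lem crossing ext}), the description of the indecomposable summands of $F\Phi(\zd)$ as the $\Psi(\tilde\zd')$ with $\tilde\zd'\in G\tilde\zd$ coming from the construction of $\Psi$, and the biadjunction of the induction and restriction functors together with $HF(-)\cong(-)\oplus\zs(-)$, one shows that both sides of \eqref{eq intersection number2} compute $\dim\Ext^1$ between the appropriate $AG$-modules; this bookkeeping is cleaner but relies on pinning down precisely how $\Psi$ distributes copies of modules over the $G$-orbits.
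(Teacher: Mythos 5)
Your argument is correct, but it follows a genuinely different route from the paper's. The paper dispatches this proposition in four lines via Calabi--Yau duality: it uses Proposition~\ref{lem crossing ext} to identify $e(\tilde\zg,\tilde\zd)$ with $\dim\Ext^1_{\calc^2_{\mathbb{D}}}(\BM_{\mathbb{D}}\tilde\zg,\BM_{\mathbb{D}}\tilde\zd)$, applies the $3$-Calabi--Yau property to swap the two arguments at the cost of trading $\Ext^1$ for $\Ext^2$, converts back to $\Ext^1$ by a shift realized geometrically as the rotation $R$, and then invokes \eqref{eq intersection number} with $R\tilde\zd,\tilde\zg$ substituted for $\tilde\zg,\tilde\zd$ --- a clean ``transpose'' of the earlier proposition. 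You instead promote the $\zs$-equivariance $e(\zs\zg,\zd)=e(\zg,\zs\zd)$ (and its $\calp$-analogue, trivial on non-loops and verified on loops from the explicit crossing lists) to an explicit lemma, and then rerun the four cases of Proposition~\ref{prop intersection number} with the two slots exchanged, using the equivariance to move $\zs$ from one side to the other. Both proofs are valid; the paper's is shorter once the $3$-CY duality and the identification of the shift with $R$ are in place, while yours is more elementary and keeps the branched-cover symmetry visible throughout. One small streamlining worth noting: in the case where $\tilde\zg$ is a loop and $\tilde\zd$ is not, you need not reopen the \emph{proof} of Proposition~\ref{prop intersection number}(c) --- the stated identity in (c), together with $\zs\tilde\zd=\tilde\zd$, $\zs\zg=\zg$, and your equivariance lemma, already forces $2e(\tilde\zg,\tilde\zd)=2e(\zg,\zd)$, which is what you need.
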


\begin{proof}
Consider the following sequence of equations.
\begin{align*}
e(\tilde\zg, \tilde\zd) & =\dim \textup{Ext}^1_{\calc^2_{\mathbb{D}}} (\BM_\mathbb{D}(\zgg),\BM_\mathbb{D}(\zdd)) \\
&= \dim \textup{Ext}^2_{\calc^2_{\mathbb{D}}} (\BM_\mathbb{D}(\zdd), \BM_\mathbb{D}(\zgg))\\
&=\dim \textup{Ext}^1_{\calc^2_{\mathbb{D}}} (\BM_\mathbb{D}(R\zdd), \BM_\mathbb{D}(\zgg))\\
&= e(R\zdd, \zgg)
\end{align*}
The first and the last equality follow from Lemma~\ref{lem crossing ext}, while the second equality follows from the 3-Calabi-Yau property of $\calc^2_{\mathbb{D}}$.  Finally, the third equality holds since $\BM_\mathbb{D}$ is an equivalence of categories where the rotation $R$ in $\calp$ corresponds to an inverse shift in  $\calc^2_{\mathbb{D}}$.  Now the result follows from equation~(\ref{eq intersection number}) by replacing $R\zdd, \zgg$ with $\zgg, \zdd$ respectively. 
\end{proof}

\subsection{Combinatorial interpretation of the equivalence $\Psi$.}  We are now ready for our main result.

First, we define the rotation operation $R$ on the category $\arcp$ where the punctured polygon $\calp$ has size $N$, which is the analog of the clockwise rotation $R$ on $\diag (\cals)$.  Let $\tilde\zg$ be a 2-arc in $\calp$.  If $\tilde\zg=(i,j)$ is not a loop then define $R\tilde\zg = (i+1, j+1)$.  If $\tilde\zg=(i,i)^\pm$ is a loop then define $R\zg=(i+1,i+1)^\pm$ whenever $(N+1)/2$ is even and otherwise let $R\zg=(i+1,i+1)^\mp$.  Moreover, we can extend the definition of $R$ to 2-pivots $\tilde\zg \to \tilde \zd$ by setting $R(\tilde\zg \to \tilde \zd) = R \tilde\zg \to R\tilde \zd$.

Let $\tilde\rho(i)\in\arcp$ denote the radical line of vertex $i$.
For every 2-arc $\tilde \zg$ in $\calp$ we define projective modules 
\[\tilde P_1(\tilde \zg)=\bigoplus_{j} e(\tilde\rho(j),\tilde\zg)\, \tilde P(j)\qquad \textup{and}\qquad
 \tilde P_0(\tilde \zg)=\bigoplus_{i} e(\tilde\zg,\tilde\rho(i))\, \tilde P(i) .\]

\begin{thm}
 \label{thm checkerboard}
Let $B=\overline e AG \overline e$ be the basic algebra of the skew group algebra of a dimer tree algebra $A$ with respect to the action of a group $G$ of order 2. Let $\calp$ be the associated punctured polygon with checkerboard pattern. 
For each 2-arc $\tilde \zg$ in $\calp$ there exists a morphism $\tilde f_{\tilde \zg}\colon \tilde P_1(\tilde \zg)\to \tilde P_0(\tilde \zg)$ such that the mapping $\tilde \zg\mapsto \coker \tilde f_{\tilde \zg}$ induces an equivalence of categories
\[\Psi\colon\arcp\to\scmp \,B.\]
Under this equivalence, the radical line $\tilde \rho(i)$ corresponds to the radical of the indecomposable projective $\tilde P(i)$ for all vertices $i\in Q_B$. The clockwise rotation $R$ of $\calp$ corresponds to the shift $\zO$ in $\scmp\,B$ and $R^2 $ corresponds to the inverse Auslander-Reiten translation $\tau^{-1}=\zO^2$. Thus
\[
\begin{array}{rcl}
\Psi (\tilde \rho(i))  & =    & \rad \tilde P(i)   \\
 \Psi \circ R & =  & \zO\circ \Psi   \\
 \Psi\circ R^2& =  & \tau^{-1}\circ \Psi  
\end{array}
\]
Furthermore, $\Psi$ maps the 2-pivots in $\arcp$ to the irreducible morphisms in $\scmp\,B$, and the meshes in $\arcp$ to the Auslander-Reiten triangles in $\scmp\,B$.
\end{thm}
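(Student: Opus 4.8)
The plan is to derive the whole statement from its type $\mathbb{A}$ counterpart, Theorem~\ref{thm SS4}, by pushing the latter forward along the induction functor $F$ and then translating the combinatorics of the $2N$-gon $\cals$ into the combinatorics of the punctured $N$-gon $\calp$ via Propositions~\ref{prop intersection number} and \ref{prop intersection number2}. First I would reduce to $\scmp\,AG$: the Morita equivalence $\textup{mod}\,AG\cong\textup{mod}\,B$ identifies $\scmp\,B$ with $\scmp\,AG$ and sends the indecomposable projective $B$-modules $\tilde P(k)$, $k\in(Q_B)_0$, to the indecomposable summands of the modules $F(P_A(i))$, $i\in Q_0$; concretely, by Proposition~\ref{prop RR} and the description of $Q_B$ recalled above, $F(P_A(i))\cong\tilde P(i^+)\oplus\tilde P(i^-)$ when $i$ is $\zs$-fixed and $F(P_A(i))\cong F(P_A(\zs i))\cong\tilde P(i)$ otherwise. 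Under this identification the functor $\Psi\colon\arcp\to\scmp\,AG$ of Theorem~\ref{thm Fequiv} becomes the desired functor $\Psi\colon\arcp\to\scmp\,B$. It is already known (Theorem~\ref{thm Fequiv}) to be an equivalence, to satisfy $F\Phi=\Psi F$ and $\Psi\tau_{\arcp}=\tau_{\scmp B}\Psi$, and, having been built in section~\ref{sect 5} out of the $2$-pivots with $\arcp$ the mesh category of $\zG_{\arcp}$, to send $2$-pivots to irreducible morphisms and meshes to Auslander-Reiten triangles. Thus only the displayed identities remain to be checked.

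For the radical lines I would apply the exact, projective-preserving functor $F$ to the short exact sequence $0\to\rad P_A(i)\to P_A(i)\to S_A(i)\to0$, using $\Phi(\rho(i))=\rad P_A(i)$ (Theorem~\ref{thm SS4}), the equalities $\zs P_A(i)=P_A(\zs i)$ and $\zs S_A(i)=S_A(\zs i)$ (Lemma~\ref{lem Gmod}), and Proposition~\ref{prop RR} to count indecomposable summands; this yields $F(\rad P_A(i))\cong\rad\tilde P(i^+)\oplus\rad\tilde P(i^-)$ when $i$ is $\zs$-fixed and $F(\rad P_A(i))\cong\rad\tilde P(i)$ otherwise. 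Combining this with $F\Phi=\Psi F$, Proposition~\ref{prop G action on S}(f) and the definition of $\Psi$ gives $\Psi(\tilde\rho(k))=\rad\tilde P(k)$ for every vertex $k$ of $Q_B$. For the projective presentations, let $\zgg$ be a $2$-arc with lift $\zg$ in $\cals$, which is a diameter if $\zgg$ is a loop and a non-diameter $2$-diagonal otherwise. By Theorem~\ref{thm SS4}, $\Phi(\zg)=\coker f_\zg$ for a projective cover $f_\zg\colon P_1(\zg)\to P_0(\zg)$ with $P_0(\zg)=\bigoplus_{k\in Q_0}e(\zg,\rho(k))\,P_A(k)$ and $P_1(\zg)=\bigoplus_{k\in Q_0}e(\rho(k),\zg)\,P_A(k)$. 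Applying $F$ and invoking the argument of Proposition~\ref{prop Fsyz}(b), the map $F(P_0(\zg))\to F(\Phi(\zg))$ is again a projective cover. Regrouping the summands of $F(P_0(\zg))$ by $\zs$-orbits of vertices and using the identification above, the multiplicity of $\tilde P(k)$ in $F(P_0(\zg))$ equals $e(\zg,\rho(k))+e(\zg,\rho(\zs k))$ for $k$ not $\zs$-fixed, and equals $e(\zg,\rho(k))$ for each of $\tilde P(k^+),\tilde P(k^-)$ when $k$ is $\zs$-fixed. On the other hand Propositions~\ref{prop intersection number} and \ref{prop intersection number2}, together with the fact that the symmetry $\zs$ of $\cals$ preserves directed crossing numbers, identify these multiplicities with $e(\zgg,\tilde\rho(k))$, i.e.\ $F(P_0(\zg))=\tilde P_0(\zgg)$, and dually $F(P_1(\zg))=\tilde P_1(\zgg)$. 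When $\zgg$ is not a loop I would set $\tilde f_{\zgg}=F(f_\zg)$; when $\zgg=(r,r)^{\pm}$ is a loop, $F(\Phi(\zg))=\Psi((r,r)^+)\oplus\Psi((r,r)^-)$ and the projective cover $F(P_0(\zg))\to F(\Phi(\zg))$ splits as a direct sum of the projective covers of the two summands, from which $\tilde f_{(r,r)^{\pm}}$ is read off as the matching summand of $F(f_\zg)$. In both cases $\coker\tilde f_{\zgg}=\Psi(\zgg)$ with the asserted projectives.

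For the compatibility with rotation I would use that the clockwise rotation of $\cals$ by one vertex descends, under the $2$-fold branched cover $\cals\to\calp$, to the clockwise rotation $R$ of $\calp$ by one vertex; combining $\Phi\circ R=\zO_A\circ\Phi$ (Theorem~\ref{thm SS4}, with $R$ the rotation of $\cals$), $\zO_B\circ F=F\circ\zO_A$ (Proposition~\ref{prop Fsyz}(b)), $F\Phi=\Psi F$ and the density of $F$ on objects (Proposition~\ref{prop RR}) gives $\Psi\circ R=\zO\circ\Psi$. Then $\Psi\circ R^2=\zO^2\circ\Psi=\tau^{-1}\circ\Psi$, because $\zO^2=\tau^{-1}$ on $\scmp\,B$ for the $2$-Calabi-Yau tilted algebra $B$ (section~\ref{sect CM}).

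The hard part will be the bookkeeping in the loop case throughout: one must fix, once and for all, a labeling of the indecomposable syzygies $\tilde M^{\pm}$ that is simultaneously compatible with the $\pm$-labels on the loops $(r,r)^{\pm}$ of $\calp$, with the Amiot-Plamondon $\pm$-labels on the vertices of $Q_B$, and with the $\tau$-orbits, and then check that the explicit loop crossing numbers (the formulas preceding Example~\ref{ex62}) match the multiplicities coming from $F(P_0(\zg))$ and $F(P_1(\zg))$ for $\zg$ a diameter; this uses in particular that the nontrivial element of $G$ acts on $B$-modules exchanging $\tilde M^+$ with $\tilde M^-$. The parity-dependent sign in the definition of $R$ on loops (depending on whether $(N+1)/2$ is even) likewise has to be shown to be forced by $\Psi\circ R=\zO\circ\Psi$, which is where the branched cover interacts with the choice of fundamental domain made in section~\ref{sect 6}. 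Once these conventions are pinned down, the remaining verifications reduce to the intersection-number identities of Propositions~\ref{prop intersection number} and \ref{prop intersection number2} already invoked above.
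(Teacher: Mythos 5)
Your proposal is correct and follows essentially the same route as the paper: apply the induction functor $F$ to the projective presentations from Theorem~\ref{thm SS4}, regroup summands by $G$-orbits, translate multiplicities via Propositions~\ref{prop intersection number} and \ref{prop intersection number2}, and deduce the equivalence from Theorem~\ref{thm Fequiv} plus Morita equivalence, with the $\zO$- and $\tau$-compatibilities following from Proposition~\ref{prop Fsyz}. Your added care with the $\pm$-bookkeeping in the loop case is the part the paper treats most tersely, but the substance of the argument is the same.
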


\begin{proof}
This result follows from Theorem~\ref{thm SS4} and properties of the induction functor $F$. The main part of the proof below is to check that our definition of the intersection numbers is correct. 

Let $\tilde\zg$ be a 2-arc in $\calp$ and let $\zg$ be a lift in $\cals$. 
 Theorem~\ref{thm SS4} shows the existence of a morphism $f_\zg\colon
 P_1(\zg) \to P_0(\zg) $
that defines a functor $\Phi\colon\diags \to\scmp \,A,\  \Phi(\zg)= \coker\, M_\zg$. The projective modules above are defined as 
\[P_0(\zg)=\bigoplus_i e(\zg, \rho(i))\, P(i) \quad\textup{ and }\quad P_1(\zg)=\bigoplus_j e(\rho(j),\zg)\, P(j),\]
where both sums run over all vertices of the quiver of $A$. 
We can rewrite $P_0(\zg)$ by grouping its summands according to the $G$-orbits  on the vertex set of $Q_A$ as follows. Let $I$ be a set of representatives of the $G$-orbits in the vertex set  of $Q_A$, and let us write $I=I_1\sqcup I_2$, where $I_1$ contains the vertices that are fixed by $\zs$ and $I_2$ contains the vertices that are not fixed by $\zs$. In the polygon $\cals$, the set $I_1$ corresponds to  the diameters and $I_2$ to the non-diameters. Then
\[
 P_0(\zg) = \oplus_{i\in I_1}   e(\zg, \rho(i))\, P(i)  \ \oplus\  \oplus_{i\in I_2} \Big( e(\zg, \rho(i))\, P(i) \oplus e(\zg, \rho(\zs i))\, P(\zs i)\Big).
 \]

We now apply the induction functor $F$.  Proposition~\ref{prop RR} yields $F(P(i))=\tilde P(\tilde i^+)\oplus \tilde P(\tilde i^-)$ if $i\in I_1$ and   $F(P(i))=F(P(\zs i))=\tilde P(\tilde i)$ if $i\in I_2$, where $\tilde i^\pm$ are the two vertices of $Q_B$ that correspond to the two copies $\tilde\rho(i^\pm)$  of a loop  in $\arcp$. Thus we obtain
\[ F(P_0(\zg)) = \oplus_{i\in I_1} \,  e(\zg, \rho(i))\, \Big( \tilde P(\tilde i ^+) \oplus \tilde P(\tilde i ^-) \Big)\ \oplus\  \oplus_{i\in I_2} \,\Big(e(\zg, \rho(i))+ e(\zg, \rho(\zs i))\Big)\, \tilde P(\tilde i). \]

Note that the coefficient $ e(\zg, \rho(i))$ in the first direct sum is equal to  $\sum_{i'\in Gi}  e(\zg, \rho(i'))$, since $\zs i=i$ in this case, and the coefficient $ e(\zg, \rho(i))+ e(\zg, \rho(\zs i))$ in the second direct sum is equal to  $\sum_{i'\in Gi}  e(\zg, \rho(i'))$. Therefore  equation~(\ref{eq intersection number}) implies
\begin{equation}\label{eq 63} F(P_0(\zg)) = \oplus_{\tilde i\in \tilde I_1}\,   \sum_{\tilde \zg'\in G\tilde \zg}e(\tilde \zg', \tilde \rho(\tilde i))\, \Big( \tilde P(\tilde i ^+) \oplus \tilde P(\tilde i ^-) \Big)\ \oplus\  \oplus_{\tilde i\in \tilde I_2} \sum_{\tilde\zg'\in G\tilde\zg} \,e(\tilde\zg', \tilde\rho(\tilde i))\, \tilde P(\tilde i), \end{equation}
 where $\tilde I_1\sqcup \tilde I_2$ is a complete set of representatives of the $G$ orbits in the vertex set of $Q_B$ such that $\tilde I_1$ contains the vertices that are not fixed by $\sigma$ and $\tilde I_2$ contains the vertices that are fixed by $\sigma$. 

 
 Suppose first that $\tilde\zg$ is not a loop. Then its orbit $G\tilde \zg$ only contains $\tilde \zg$ and thus  equation~(\ref{eq 63}) becomes 
 \[ F(P_0(\zg))=
 \oplus_{\tilde i\in \tilde I_1}\, 
 e(\tilde \zg, \tilde \rho(\tilde i))\, \Big( \tilde P(\tilde i ^+) \oplus \tilde P(\tilde i ^-) \Big)\ \oplus\  \oplus_{\tilde i\in \tilde I_2}  \,e(\tilde\zg, \tilde\rho(\tilde i))\, \tilde P(\tilde i),
 \]
 which can be written as
\begin{equation}\label{eq 631}F(P_0(\zg))=  \oplus_{i} \,e(\tilde\zg,\tilde\rho( \tilde i))\, \tilde P(\tilde i) 
 =\tilde P_0(\tilde\zg),\end{equation}
  where the sum is over all vertices of the quiver of $B$.
  
 On the other hand, if $\tilde \zg=(h,h)$ is a loop, then $G\tilde \zg$ contains two arcs $(h,h)^+$ and $(h,h)^-$. Then
equation~(\ref{eq 63}) becomes 
 \[ F(P_0(\zg))=
 \oplus_{\tilde i\in \tilde I_1}\, 
 \Big(e((h,h)^+, \tilde \rho(\tilde i))  
 +e((h,h)^-, \tilde \rho(\tilde i)) \Big)  
 \, \Big( \tilde P(\tilde i ^+) \oplus \tilde P(\tilde i ^-) \Big)\ \oplus\  \oplus_{\tilde i\in \tilde I_2}  \,2e((h,h), \tilde\rho(\tilde i))\, \tilde P(\tilde i),
 \]
which yields
\begin{equation}\label{eq 632}F(P_0(\zg))= \tilde P_0((h,h)^+)\oplus
\tilde P_0((h,h)^-).\end{equation}

The analogous computation for $P_1(\zg)$  together with equation~(\ref{eq intersection number2}) yields
\begin{equation}\label{eq 633} F(P_1(\zg)) =\left\{
\begin{array}{cc}
 \tilde P_1(\tilde\zg) &\textup{if $\tilde \zg $ is not a loop};\\
  \tilde P_1((h,h)^+)\oplus
\tilde P_1((h,h)^-) &\textup{if $\tilde \zg =(h,h)$ is  a loop.}\\
\end{array}
\right. 
\end{equation}

To complete the proof of the theorem, we define $\tilde f_{\tilde \zg} = F(f_\zg)$. Then $\coker \tilde f_{\tilde\zg}= F(\coker f_\zg)$, since $F$ is an exact functor. In particular, $\coker \tilde f_{\tilde \zg}$ is a non-projective syzygy, by Proposition~\ref{prop Fsyz} (a), and
the projective presentation in Theorem~\ref{thm SS4} is mapped under $F$ to a projective presentation of $\coker \tilde f_{\tilde \zg}$.  Therefore the functor $\Psi $ is well-defined. 
The statement that $\Psi$ is an equivalence of categories follows from Theorem~\ref{thm Fequiv}, using the fact that the algebras $AG$ and $B$ are Morita equivalent. 

It remains to prove the three equations in the statement of the theorem.  The first equation follows from the corresponding equation in Theorem~\ref{thm SS4}, because the exact  functor $F$ maps the radical  of the projective $P(i)$ to the  radical   of the projective $F(P(i))$. The last two equations  follow from the corresponding equations in Theorem~\ref{thm SS4} together with 
 Proposition~\ref{prop Fsyz}, which says that the induction functor $F$ commutes with the syzygy operator $\zO$.  
 \end{proof}

\subsection{Applications}

We immediately obtain the following consequences of our results. The statements appearing below are analogous to the ones shown for dimer tree algebras in Corollary~\ref{cor typeA}.

We begin by defining the weight of the algebra $B$. Recall that the quiver $Q_B$ of $B$ is obtained from a quiver $Q^0$ of a dimer tree algebra by replacing a boundary arrow $\alpha\colon i\to j$ in $Q^0$ with two arrows $\alpha^-\colon  i^-\to j^-$ and $\alpha^+\colon  i^+\to j^+$ and each arrow $\beta$ in $Q^0$ starting or ending at $i$ or $j$ by two arrows $\beta^+$ and $\beta^-$ starting or ending at $i^+, i^-$ or $j^+, j^-$.

\begin{definition}\label{def 66}
Let $B$ be a skew group algebra of a dimer tree algebra.

 (a) For every boundary arrow $\zb\not=\alpha$ in $Q^0$, we define its unique \emph{skew cycle path} to be
 $\tilde{\mathfrak{c}}(\zb)=\zb_1\zb_2\cdots\zb_{\ell(\zb)}$ in $Q^0$, where
 \begin{itemize}
\item [(i)] $\zb_1=\zb$ and $\zb_{\ell(\zb)}$  are boundary arrows in $Q^0$ different from $\alpha$, and each of 
 $\zb_2,\ldots,\zb_{\ell(\zb)-1}$ is an interior arrow or the arrow $\alpha$,
\item [(ii)] every subpath of length two $\zb_i\zb_{i+1}$, 
 is a subpath of a chordless cycle $C_i$ in $Q^0$, and $C_i= C_j$ if and only if $j=i+1$ and $\zb_{i+1}=\alpha$.  
\end{itemize}

 (b) The \emph{weight} $\tilde{\text{w}}(\zb)$ of the boundary arrow $\zb$ is defined as
\[\tilde{\textup{w}}(\zb) =\left\{ 
\begin{array}
 {ll} 1&\textup{if the length of  $\tilde{\mathfrak{c}}(\zb)$ is odd;}\\
 2&\textup{if the length of  $\tilde{\mathfrak{c}}(\zb)$ is even.}\\
\end{array} \right.\]

(c) The \emph{total weight} of $B$ is defined as $\sum_\zb \tilde{\textup{w}}(\zb) $,
where the sum is over all boundary arrows  of $Q^0$ different from $\alpha$.
\end{definition}

Note that the skew cycle path $\tilde{\mathfrak{c}}(\zb)$ coincides with the cycle path ${\mathfrak{c}}(\zb)$ unless $\tilde{\mathfrak{c}}(\zb)$ ends in the arrow $\alpha$ in which case $\tilde{\mathfrak{c}}(\zb)={\mathfrak{c}}(\zb){\mathfrak{c}}(\za)$.  Recall that cycle paths for dimer tree algebras correspond to moving clockwise around the unshaded regions of the checkerboard polygon.  Similarly, the skew cycle paths correspond to moving clockwise around the unshaded regions in the punctured polygon. Moreover, the weight $\tilde{\text{w}}(\zb)$ equals the weight of the corresponding arrow in the quiver $Q_A$.  This implies that the total weight of the algebra $B$ is half the total weight of $A$.

\begin{corollary}\label{cor 67}
(a) The category $\scmp\, B$ is equivalent to the 2-cluster category of type $\mathbb{D}_{(N+1)/2}.$ In particular, the number of indecomposable syzygies is $N(N+1)/2$.

(b) The total weight of $B$ is a derived invariant. 

(c) The projective resolution of any syzygy is periodic of period $N$ or $2N$. An indecomposable syzygy $M_{\tilde\zg}$ has period $2N$ if and only if the corresponding 2-diagonal $\tilde\zg$ is a loop in $\calp$ and $(N+1)/2$ is odd.

(d) The indecomposable syzygies over $B$ are rigid $B$-modules.

(e)  Let $L,M$ be indecomposable syzygies over $B$. Then the dimension of $\Ext^1_B(L,M)\oplus \Ext^1_B(M,L)$ is equal to the sum of the two crossing numbers between the corresponding 2-arcs. In particular, the dimension is either 0, 1, or 2.

(f) 
Let $\tau$ denote the Auslander-Reiten translation in $\textup{mod}\,B$ and $\nu$ the Nakayama functor. 
 We denote the stable cosyzygy category by $\underline{\textup{CMI}}\,B$.
The following diagram commutes. 
\[\xymatrix@C40pt{
\scmp\,B\ar[r]^{\tau}&\underline{\textup{CMI}}\,B\\
\arcp\ar[u]^{\textup{cok}\,\tilde f_{\tilde\zg}} \ar[r]^{1}
&\arcp \ar[u]_{\textup{ker}\,\nu\! \tilde f_{\tilde\zg}}  
}
\]
\end{corollary}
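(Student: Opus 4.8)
The plan is to deduce all six statements from the corresponding facts for the dimer tree algebra $A$ (Corollary~\ref{cor typeA}) by transporting them along the induction functor $F$ and the equivalences $\Phi,\Psi,\BM_\mathbb{D}$ of Theorems~\ref{thm equivalences}, \ref{thm Fequiv}, \ref{thm checkerboard}, passing freely between $AG$ and its Morita-equivalent basic algebra $B$. For (a), the equivalence $\scmp\,B\cong\scmp\,AG\cong\calc^2_{\mathbb{D}_{(N+1)/2}}$ is Theorem~\ref{thm equivalences} together with the Morita equivalence $AG\sim B$; for the count one may enumerate $2$-arcs in $\calp$ directly (there are $\binom{N}{2}-N=N(N-3)/2$ non-loop $2$-arcs and $2N$ loop $2$-arcs, summing to $N(N+1)/2$), or use Proposition~\ref{prop 2-clusters}: the $N$ diameters of $\cals$ --- which by Proposition~\ref{prop G action on S}(b) are exactly the $\zs$-fixed $2$-diagonals, all of them $2$-diagonals since $N$ is odd --- each contribute two indecomposables to $\calc^2_{\mathbb{D}}$, while the remaining $N(N-2)-N$ indecomposables of $\calc^2_{\mathbb{A}_{N-2}}$ are identified in $\zs$-pairs, again giving $\tfrac12(N(N-2)-N)+2N=N(N+1)/2$. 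Part (b) is then immediate: $\scmp\,B$ is triangle-equivalent to the singularity category of $B$, so its number of indecomposable objects, which by (a) equals $N(N+1)/2$, is a derived invariant; since this number determines $N$, and $N$ is the total weight of $B$ (as noted just before the corollary), the total weight is a derived invariant.

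For (c), Theorem~\ref{thm checkerboard} identifies $\zO$ with the clockwise rotation $R$ of $\calp$, and since $\Psi$ is a bijection on isomorphism classes the period of the complete projective resolution of $M_{\tilde\zg}$ equals the order of $R$ on the $2$-arc $\tilde\zg$. A direct check with the definition of $R$ shows this order is $N$ when $\tilde\zg$ is not a loop; and when $\tilde\zg=(i,i)^\pm$ it is $N$ if $(N+1)/2$ is even (so $R$ fixes the sign) and $2N$ if $(N+1)/2$ is odd (so $R$ reverses the sign and, $N$ being odd, $\gcd(N,2)=1$). This yields the dichotomy and the stated characterization of the period-$2N$ case. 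For (d) and (e), recall that for $M\in\cmp\,B$ one has $\Ext^1_B(M,-)\cong\widehat{\Ext}^1_B(M,-)\cong\Ext^1_{\scmp B}(M,-)$, because a complete resolution of $M$ coincides with its projective resolution in non-negative degrees; so it suffices to compute $\Ext^1$ inside the triangle equivalence $\scmp\,B\cong\calc^2_{\mathbb{D}}$, where Proposition~\ref{lem crossing ext} gives $\dim\Ext^1_{\calc^2_{\mathbb{D}}}(\BM_\mathbb{D}(\tilde\zg),\BM_\mathbb{D}(\tilde\zd))=e(\tilde\zg,\tilde\zd)$. A $2$-arc is a simple curve, and for a loop the crossing formula gives $e((i,i)^\pm,(i,i)^\pm)=0$, so $e(\tilde\zg,\tilde\zg)=0$ and (d) follows; summing the two directed crossing numbers yields (e), the bound $0,1,2$ being immediate from the definition of the crossing numbers. (Alternatively (d) follows from Corollary~\ref{cor typeA}(d),(e) together with $\Ext^1_{AG}(FX,FX)\cong\Ext^1_A(X,X)\oplus\Ext^1_A(X,\zs X)$ and the $\zs$-symmetry $\Ext^1_A(X,\zs X)\cong\Ext^1_A(\zs X,X)$.)

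For (f), the map $\tilde f_{\tilde\zg}$ of Theorem~\ref{thm checkerboard} is obtained by applying $F$ to the minimal projective presentation $f_\zg$ of $M_\zg$ from Theorem~\ref{thm SS4}; since $F$ sends projective covers to projective covers (Proposition~\ref{prop Fsyz}) and minimality of a projective presentation passes through Morita equivalence, $\tilde f_{\tilde\zg}\colon\tilde P_1(\tilde\zg)\to\tilde P_0(\tilde\zg)$ is a minimal projective presentation of $M_{\tilde\zg}=\coker\tilde f_{\tilde\zg}$. The classical formula for the Auslander--Reiten translate, $\tau M_{\tilde\zg}=D\,\mathrm{Tr}\,M_{\tilde\zg}=\ker(\nu\tilde f_{\tilde\zg}\colon\nu\tilde P_1(\tilde\zg)\to\nu\tilde P_0(\tilde\zg))$, together with the facts that $\nu$ carries projectives to injectives and that $\tau$ maps $\cmp\,B$ into the cosyzygy category (exactly as for $A$ in Corollary~\ref{cor typeA}(f), $B$ having Gorenstein dimension $1$), gives the commutativity of the diagram. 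Most of the proof is formal transport of structure --- which is why the corollary is ``immediate'' --- so the genuine work sits in (c), where one must keep careful track of the effect of $R$ on the $\pm$ decorations of loops, and in (f), where the delicate point is that $\tilde f_{\tilde\zg}$ is honestly \emph{minimal}, in particular that in the loop case $F(f_\zg)$ decomposes as $\tilde f_{(h,h)^+}\oplus\tilde f_{(h,h)^-}$ with no cross-terms; both of these are essentially already contained in the theorems invoked.
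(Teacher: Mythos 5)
Your proof is correct and follows essentially the same route as the paper: each part is obtained by transporting the type $\mathbb{A}$ statement from Corollary~\ref{cor typeA} through the equivalences $\Phi,\Psi,\BM_\mathbb{D}$ and the induction functor $F$, which is exactly what the authors do. The only deviations are minor and interchangeable: for the count in (a) the paper reads off $(N+1)/2$ $\tau$-orbits of size $N$ from the AR quiver of $\calc^2_{\mathbb{D}_{(N+1)/2}}$ where you enumerate $2$-arcs in $\calp$; for (d)--(e) the paper passes from $\Ext^1$ in $\scmp B$ back to $\Ext^1_B$ via the cited fact that $\cmp B$ is extension closed in $\textup{mod}\,B$, whereas you use the equivalent complete-resolution formulation $\Ext^1_B\cong\widehat{\Ext}^1_B\cong\Ext^1_{\scmp B}$ on Cohen--Macaulay modules, both standard in Gorenstein dimension one. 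Your parenthetical alternative for (d) is also valid, but note it silently uses that a non-diameter $2$-diagonal $\zg$ and its rotation $\zs\zg$ never cross in $\cals$, so that $\Ext^1_A(X,\zs X)=0$; this is true and easy to see from the rotation-by-$\pi$ picture, but it does not follow from Corollary~\ref{cor typeA}(d) alone, which only gives $\Ext^1_A(X,X)=0$.
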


\begin{proof}
(a) Since $B$ is the basic algebra of $AG$ then the  two categories are equivalent by Theorem~\ref{thm equivalences}.  
The Auslander-Reiten quiver of  $\calc^2_{\mathbb{D}_{(N+1)/2}}$ has  $(N+1)/2$  $\tau$-orbits each of cardinality $N$. Thus there are $N(N+1)/2$ indecomposable objects.

(b) The weight of $B$ equals $N$, the size of the punctured polygon. By part (a), the integer $N$ determines the syzygy category of $B$ up to equivalence.  
Now the statement follows because derived equivalent algebras have equivalent singularity categories.

(c) By definition of $R$ if a 2-arc $\tilde\zg$ is not a loop then $R^N \tilde\zg=\tilde\zg$. Moreover, if $\tilde\zg=(i,i)^\pm$ is a loop then $R^{N}\tilde\zg=\tilde\zg$ whenever $(N+1)/2$ even, and since $N$ is odd then $R^N\tilde\zg=(i,i)^\mp$ whenever  $(N+1)/2$ is odd.  Hence, only in the latter case the period of $\tilde\zg$ is $2N$ under $R$, while in all other cases it is $N$.  By Theorem~\ref{thm checkerboard} the rotation $R$ on the 2-arcs corresponds to the syzygy functor $\Omega$ on $\scmp\,B$, which shows (c).

(d) Let $M$ be an indecomposable syzygy over $B$. Under the equivalence of categories in Theorem~\ref{thm equivalences}, $M$ corresponds to a 2-arc in $\arcp$ which, because of Proposition~\ref{lem crossing ext}, is a rigid object in $\scmp\,B$. Thus $M$ is rigid in $\text{mod}\,B$, as $\scmp\,B$ is an extension closed subcategory of $\text{mod}\,B$ by \cite[Proposition 2.3]{SS3}. 

(e) This follows from the equivalences in Theorem~\ref{thm equivalences} and Proposition~\ref{lem crossing ext}, together with the  fact that $\scmp\,B$ is an extension closed subcategory of $\text{mod}\,B$ as above. 

(f) The proof here is analogous to the one for Corollary~\ref{cor typeA}(f) given in \cite{SS3}.
\end{proof}

A module $M\in\text{mod}\,A$ is said to be $\tau$-rigid if $\text{Hom}_A(M, \tau M)=0$.  Note that a $\tau$-rigid module is rigid, and we know that indecomposable syzygies over algebras of finite Cohen-Macaulay type are rigid.  Moreover, in \cite{SS3} we conjecture that indecomposable syzygies over dimer tree algebras are $\tau$-rigid and reachable.  In particular, this would imply that they correspond to cluster variables in the cluster algebra of $Q$.  It is natural to ask how this property behaves by passing to the skew group algebra of a dimer tree algebra, thus we make the following observation. 

\begin{prop}\label{prop 68}
The module $M\oplus \sigma M$ is $\tau$-rigid in $\textup{mod}\,A$ if and only if the induced module $FM$ is $\tau$-rigid in $\textup{mod}\,AG$. 
\end{prop}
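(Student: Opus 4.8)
The strategy is to rewrite each of the two $\tau$-rigidity conditions as the vanishing of the \emph{same} pair of $\Hom$-spaces in $\textup{mod}\,A$, using the induction--restriction adjunction together with the compatibility of the induction functor $F$ with the Auslander--Reiten translation.

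I would first record the facts about $F=-\otimes_A AG$ and the restriction functor $H$ that are needed. Since $AG=A\oplus A\sigma$ as an $A$-$A$-bimodule, $F$ is left adjoint to $H$, so there is a natural isomorphism $\Hom_{AG}(FX,Y)\cong\Hom_A(X,HY)$; moreover $HF(X)=X\otimes_A AG\cong X\oplus \sigma X$ naturally in $X$, the summand $X\otimes_A A\sigma$ being identified with $\sigma X$ via the twisted action. This is the natural-isomorphism form of Proposition~\ref{prop RR}(a), valid for every $X$. Next I would note that $\tau_{AG}\circ F\cong F\circ\tau_A$ as functors $\textup{mod}\,A\to\textup{mod}\,AG$: both sides vanish on projectives, and for an indecomposable non-projective $M$ one applies $F$ to the almost split sequence $0\to\tau_A M\to E\to M\to 0$ and uses Proposition~\ref{prop RR2}(a) --- the image is a direct sum of almost split sequences whose right-hand terms are, by Proposition~\ref{prop RR}(c)--(d), exactly the indecomposable summands of $FM$, so comparing left-hand terms gives $\tau_{AG}(FM)\cong F(\tau_A M)$; the general case follows by additivity. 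Finally, by Lemma~\ref{lem Gmod}(a) the functor $\sigma$ is an exact automorphism of $\textup{mod}\,A$ with $\sigma^2=\id$, so $\sigma\circ\tau_A\cong\tau_A\circ\sigma$ and $\Hom_A(\sigma X,\sigma Y)\cong\Hom_A(X,Y)$ for all $X,Y$.

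With these in hand the verification is direct. On the skew group side,
\[
\Hom_{AG}(FM,\tau_{AG}FM)\cong\Hom_{AG}(FM,F\tau_A M)\cong\Hom_A(M,HF\tau_A M)\cong\Hom_A(M,\tau_A M)\oplus\Hom_A(M,\sigma\tau_A M),
\]
so $FM$ is $\tau$-rigid in $\textup{mod}\,AG$ if and only if $\Hom_A(M,\tau_A M)=0$ and $\Hom_A(M,\sigma\tau_A M)=0$. On the other side, $\tau_A(M\oplus\sigma M)\cong\tau_A M\oplus\sigma\tau_A M$, and expanding $\Hom_A(M\oplus\sigma M,\tau_A M\oplus\sigma\tau_A M)$ into its four summands and applying the autoequivalence $\sigma$ (together with $\sigma^2=\id$) identifies $\Hom_A(\sigma M,\sigma\tau_A M)\cong\Hom_A(M,\tau_A M)$ and $\Hom_A(\sigma M,\tau_A M)\cong\Hom_A(M,\sigma\tau_A M)$; hence $M\oplus\sigma M$ is $\tau$-rigid in $\textup{mod}\,A$ if and only if $\Hom_A(M,\tau_A M)=0$ and $\Hom_A(M,\sigma\tau_A M)=0$. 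The two conditions coincide, which proves the proposition.

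The step I expect to require the most care is the identification $\tau_{AG}\circ F\cong F\circ\tau_A$: one must track which case of Proposition~\ref{prop RR}(c)--(d) applies to $M$, match the direct-sum decomposition of the image of the almost split sequence with the decomposition of $FM$ into indecomposables, and check along the way that $FM$ has no projective summand when $M$ is indecomposable non-projective. Once this is settled, the rest is a formal manipulation with the adjunction and with the autoequivalence $\sigma$.
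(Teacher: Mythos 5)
Your proof is correct and follows the same route as the paper: the chain of isomorphisms $\Hom_{AG}(FM,\tau_{AG}FM)\cong\Hom_{AG}(FM,F\tau_A M)\cong\Hom_A(M,HF\tau_A M)\cong\Hom_A(M,\tau_A M\oplus\sigma\tau_A M)$, combined with the observation that $\sigma$ is an exact autoequivalence commuting with $\tau_A$. You simply spell out in more detail the two steps the paper leaves to the reader, namely the verification that $\tau_{AG}\circ F\cong F\circ\tau_A$ via Propositions~\ref{prop RR2} and \ref{prop RR}, and the four-term decomposition of $\Hom_A(M\oplus\sigma M,\tau_A(M\oplus\sigma M))$.
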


\begin{proof}
Consider the following
\[\Hom_{AG}(FM,\tau FM)\cong\Hom_{AG}(FM,F (\tau M))\cong \Hom_A(M,HF(\tau M))\cong \Hom_A(M,\tau M\oplus \sigma \tau M)\] 
where the first isomorphism follows from Proposition~\ref{prop RR2}, the second isomorphism from the adjointness of $H$ and $F$, and the third from Proposition~\ref{prop RR}.  Since $\sigma$ is an automorphism of $A$ of order 2, we also get that the right hand side is isomorphic to $\Hom_A(\sigma M,\sigma\tau M\oplus \tau M)$.  This shows the proposition. 
\end{proof}

Note that for an indecomposable syzygy $M$, we know from the geometric model that $M\oplus \sigma M$ is rigid, so again it is natural to make the following conjecture. 

\begin{conjecture}
If $M$ is an indecomposable syzygy then  $M\oplus \sigma M$ is $\tau$-rigid.
\end{conjecture}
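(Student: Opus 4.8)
The plan is to reduce the statement to a $\tau$-rigidity question over the skew group algebra and then attack it through the explicit projective presentations of Theorem~\ref{thm checkerboard}. By Proposition~\ref{prop 68}, $M\oplus\zs M$ is $\tau$-rigid over $A$ if and only if $FM$ is $\tau$-rigid over $AG$; since $AG$ is Morita equivalent to the basic algebra $B=\overline e AG\overline e$ and $\tau$-rigidity is a Morita invariant, it suffices to show that the $B$-module corresponding to $FM$ is $\tau$-rigid. By Theorems~\ref{thm Fequiv} and \ref{thm checkerboard} this module is $M_{\tilde\zg}$ when $M\not\cong\zs M$ and is $M_{(h,h)^+}\oplus M_{(h,h)^-}$ when $M\cong\zs M$. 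So the statement to prove becomes: every indecomposable syzygy $M_{\tilde\zg}$ over $B$ is $\tau$-rigid, and moreover, in the loop case, $\Hom_B(M_{(h,h)^\ze},\tau M_{(h,h)^{\ze'}})=0$ for all choices of signs $\ze,\ze'$.

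The first simplification is that the \emph{stable} part of $\Hom_B(M_{\tilde\zg},\tau M_{\tilde\zg})$ vanishes for free. Since $\tau^{-1}=\zO^2$ on $\scmp\,B$ and $\scmp\,B$ is $3$-Calabi--Yau, one has
\[\underline{\Hom}_B(M_{\tilde\zg},\tau M_{\tilde\zg})\cong \Ext^2_{\scmp B}(M_{\tilde\zg},M_{\tilde\zg})\cong D\,\Ext^1_{\scmp B}(M_{\tilde\zg},M_{\tilde\zg}),\]
which is $0$ by Corollary~\ref{cor 67}(d),(e) (rigidity of indecomposable syzygies; in the loop case the mixed extensions also vanish because the loops $(h,h)^+$ and $(h,h)^-$ have crossing number $0$). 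Hence every morphism $M_{\tilde\zg}\to\tau M_{\tilde\zg}$ factors through a projective $B$-module, and, dually, by the Auslander--Reiten formula $\overline{\Hom}_B(M_{\tilde\zg},\tau M_{\tilde\zg})\cong D\,\Ext^1_B(M_{\tilde\zg},M_{\tilde\zg})=0$, also through an injective $B$-module. So the whole content is to show that these ``projectively trivial'' morphisms are zero — something that is invisible in $\scmp\,B$ and in the $2$-cluster category, so the Calabi--Yau property alone cannot finish the argument.

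To control the projectively trivial morphisms I would use the presentation $\tilde P_1(\tilde\zg)\xrightarrow{\tilde f_{\tilde\zg}}\tilde P_0(\tilde\zg)\to M_{\tilde\zg}\to 0$ of Theorem~\ref{thm checkerboard}. After checking that it is a \emph{minimal} projective presentation (the corresponding check for $f_\zg$ in Theorem~\ref{thm SS4} should transfer under the exact functor $F$), the transpose characterization of $\tau$-rigidity yields the exact sequence
\[0\to\End_B(M_{\tilde\zg})\to\Hom_B(\tilde P_0(\tilde\zg),M_{\tilde\zg})\to\Hom_B(\tilde P_1(\tilde\zg),M_{\tilde\zg})\to D\,\Hom_B(M_{\tilde\zg},\tau M_{\tilde\zg})\to 0,\]
so that $\Hom_B(M_{\tilde\zg},\tau M_{\tilde\zg})=0$ is equivalent to the dimension identity
\[\dim\Hom_B(\tilde P_0(\tilde\zg),M_{\tilde\zg})-\dim\Hom_B(\tilde P_1(\tilde\zg),M_{\tilde\zg})=\dim\End_B(M_{\tilde\zg}).\]
Now $\dim\Hom_B(\tilde P(\tilde i),M_{\tilde\zg})$ is the multiplicity of the vertex $\tilde i$ in a composition series of $M_{\tilde\zg}$, while the multiplicities of $\tilde P(\tilde i)$ in $\tilde P_0(\tilde\zg)$ and $\tilde P_1(\tilde\zg)$ are the directed crossing numbers $e(\tilde\zg,\tilde\rho(\tilde i))$ and $e(\tilde\rho(\tilde i),\tilde\zg)$ read off the punctured checkerboard polygon $\calp$. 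Thus the left-hand side is the pairing of the ``$g$-vector'' of $M_{\tilde\zg}$ (its signed crossing numbers with the radical lines) against the dimension vector of $M_{\tilde\zg}$, and the conjecture reduces to the combinatorial identity asserting that this pairing equals $\dim\End_B(M_{\tilde\zg})$. One would establish this by first producing a geometric formula for $\underline{\dim}\,M_{\tilde\zg}$ from $\calp$ (lifting $\tilde\zg$ to $\cals$, reading the dimension vector of $\Phi(\zg)$ from its crossings via Theorem~\ref{thm SS4}, and applying $F$), then running a case analysis over the shapes of $\tilde\zg$ — a generic $2$-arc, a $2$-arc crossing the cut $\ell$, and a loop $(h,h)^\pm$ — and using that $\End_B(M_{\tilde\zg})\cong\kb$; the latter follows from $\underline{\End}_{\scmp B}(M_{\tilde\zg})\cong\kb$ once one checks that no nonzero endomorphism of $M_{\tilde\zg}$ factors through a projective.

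The main obstacle is precisely this last step. It requires a clean, uniform description of the dimension vectors of the $B$-syzygies from the punctured polygon — the analogue for $\calp$ of the crossing description available in $\cals$ — followed by a genuinely module-level bookkeeping of composition factors, which the orbifold combinatorics (the cut $\ell$, the doubled boundary label $1=N+1$, the sign-switching of loops under the rotation $R$) makes delicate. The derived- and stable-category tools that make rigidity automatic give no grip on morphisms through projectives, which is why the statement is only conjectural.
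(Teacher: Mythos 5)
This statement is a \emph{conjecture} in the paper --- the authors offer no proof --- so there is no argument of theirs to compare against. You recognize this yourself: your write-up is a reduction, not a proof, and you say explicitly at the end that the crucial step is missing. On its own terms the outline is sound as far as it goes. The passage to $\tau$-rigidity of $FM$ over $AG$ via Proposition~\ref{prop 68} is fine; the observation that the stable part of $\Hom_B(M_{\tilde\zg},\tau M_{\tilde\zg})$ vanishes by the $3$-Calabi--Yau property together with rigidity (Corollary~\ref{cor 67}) is correct; and the four-term exact sequence you invoke for a minimal projective presentation is also correct --- its cokernel is $D\Hom_B(M_{\tilde\zg},\tau M_{\tilde\zg})$, since $\coker(\Hom(\tilde P_0,-)\to\Hom(\tilde P_1,-))\cong -\otimes_B \textup{Tr}\,M_{\tilde\zg}$ and $D(-\otimes_B\textup{Tr}\,M_{\tilde\zg})\cong\Hom_B(-,\tau M_{\tilde\zg})$. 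So $\tau$-rigidity really is equivalent to the dimension identity you write down, and to the corresponding combinatorial identity on crossing numbers.

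The caution worth stressing is that the sub-claims you park are not obviously easier than the conjecture itself. You need $\End_B(M_{\tilde\zg})\cong\kb$, which --- given $\underline{\End}_{\scmp\,B}(M_{\tilde\zg})\cong\kb$ --- is equivalent to saying that no nonzero endomorphism of $M_{\tilde\zg}$ factors through a projective $B$-module. That is exactly the kind of projective-factorization control that the stable category $\scmp\,B$ and the $2$-cluster category cannot see, and it is precisely what separates $\tau$-rigidity from rigidity in this setting; it cannot ``follow once one checks'' anything purely stable. For the loop case you additionally need to control $\dim\Hom_B(M_{(h,h)^+},M_{(h,h)^-})$ in $\textup{mod}\,B$, again a quantity invisible to $\scmp\,B$. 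In other words, the reduction relocates the difficulty rather than removing it. Since the authors leave the statement as a conjecture, this is to be expected, and your honest identification of the obstacle is appropriate --- but the proposal should not be read as narrowing the gap by much.
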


\section{Examples}\label{sect examples}
\subsection{Example with detailed computations}\label{ex71}
The left picture below shows the checkerboard polygon $\cals$ as well as the quiver of the corresponding dimer tree algebra $A$. The right picture shows the checkerboard punctured polygon $\calp$ and the quiver of the skew group algebra $B=\overline{e}\,AG\,\overline{e}$. The orientations of the radical lines are indicated by arrows in both pictures. 

In the left picture there are two red 2-diagonals labeled $\zg$ and $\zs\zg$. The 2-diagonal $\zg$ crosses the radical lines $\rho(4)$ and $\rho(5')$ from right to left and $\zg$ crosses $\rho(3')$ from left to right. Thus the corresponding syzygy $\Phi(\zg)\in \scmp\,A$ is the cokernel of a map $P(4)\oplus P(5')\to P(3')$. Hence $\Phi(\zg)=\begin{smallmatrix}3'\\1\,\end{smallmatrix}$.

On the right hand side, the 2-arc $\tilde \zg$ crosses the radical lines $\tilde \rho(4)$ and $\tilde \rho(5)$ from right to left and $\tilde \zg$ crosses $\tilde \rho(3)$ from left to right. Thus the corresponding syzygy $\Psi(\tilde \zg)\in \scmp\,B$ is the cokernel of a map $\tilde P(4)\oplus \tilde P(5)\to \tilde P(3)$.  Hence $\Psi(\tilde \zg)=\begin{smallmatrix}3\\1^{\scalebox{.5}{$+$}}\ 1^{\scalebox{.5}{$-$}}\end{smallmatrix}$.

\begin{center}
 \scalebox{0.67}{\Large
\begingroup%
  \makeatletter%
  \providecommand\color[2][]{%
    \errmessage{(Inkscape) Color is used for the text in Inkscape, but the package 'color.sty' is not loaded}%
    \renewcommand\color[2][]{}%
  }%
  \providecommand\transparent[1]{%
    \errmessage{(Inkscape) Transparency is used (non-zero) for the text in Inkscape, but the package 'transparent.sty' is not loaded}%
    \renewcommand\transparent[1]{}%
  }%
  \providecommand\rotatebox[2]{#2}%
  \newcommand*\fsize{\dimexpr\f@size pt\relax}%
  \newcommand*\lineheight[1]{\fontsize{\fsize}{#1\fsize}\selectfont}%
  \ifx\svgwidth\undefined%
    \setlength{\unitlength}{579.39258808bp}%
    \ifx\svgscale\undefined%
      \relax%
    \else%
      \setlength{\unitlength}{\unitlength * \real{\svgscale}}%
    \fi%
  \else%
    \setlength{\unitlength}{\svgwidth}%
  \fi%
  \global\let\svgwidth\undefined%
  \global\let\svgscale\undefined%
  \makeatother%
  \begin{picture}(1,0.44748669)%
    \lineheight{1}%
    \setlength\tabcolsep{0pt}%
    \put(0,0){\includegraphics[width=\unitlength,page=1]{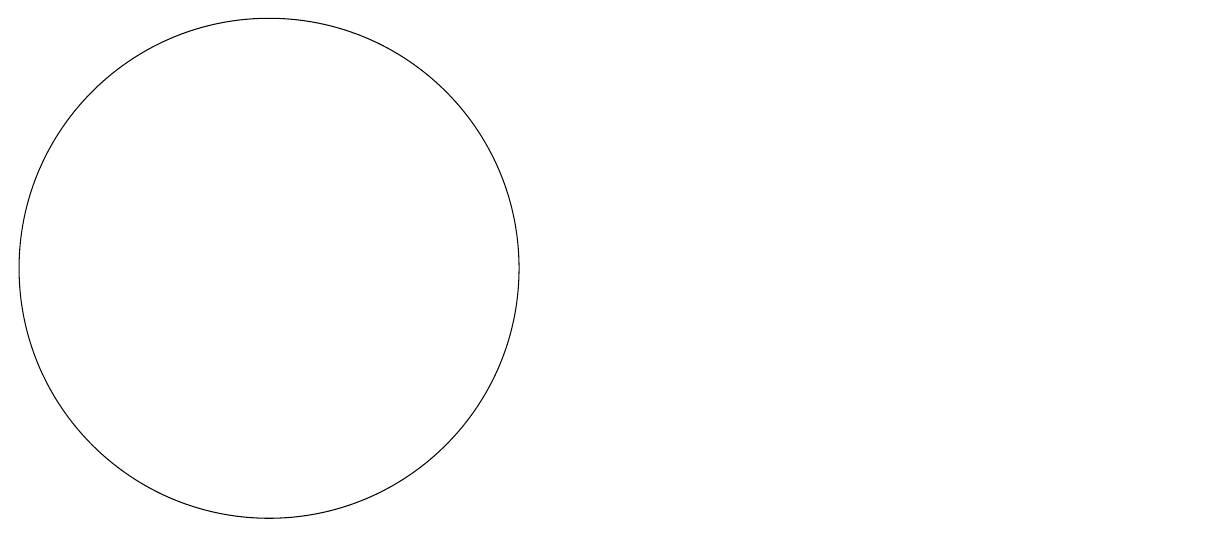}}%
    \put(-0.00234283,0.22264692){\makebox(0,0)[lt]{\lineheight{1.25}\smash{\begin{tabular}[t]{l}1\end{tabular}}}}%
    \put(0,0){\includegraphics[width=\unitlength,page=2]{example14.pdf}}%
    \put(0.01836852,0.31843689){\makebox(0,0)[lt]{\lineheight{1.25}\smash{\begin{tabular}[t]{l}2\end{tabular}}}}%
    \put(0.08050255,0.39351552){\makebox(0,0)[lt]{\lineheight{1.25}\smash{\begin{tabular}[t]{l}3\end{tabular}}}}%
    \put(0.16852573,0.43493821){\makebox(0,0)[lt]{\lineheight{1.25}\smash{\begin{tabular}[t]{l}4\end{tabular}}}}%
    \put(0.26949358,0.43234929){\makebox(0,0)[lt]{\lineheight{1.25}\smash{\begin{tabular}[t]{l}5\end{tabular}}}}%
    \put(0.354928,0.3909266){\makebox(0,0)[lt]{\lineheight{1.25}\smash{\begin{tabular}[t]{l}6\end{tabular}}}}%
    \put(0.41706191,0.31843689){\makebox(0,0)[lt]{\lineheight{1.25}\smash{\begin{tabular}[t]{l}7\end{tabular}}}}%
    \put(0.44295122,0.22005799){\makebox(0,0)[lt]{\lineheight{1.25}\smash{\begin{tabular}[t]{l}8\end{tabular}}}}%
    \put(0.41965086,0.12944585){\makebox(0,0)[lt]{\lineheight{1.25}\smash{\begin{tabular}[t]{l}9\end{tabular}}}}%
    \put(0.35751692,0.04918942){\makebox(0,0)[lt]{\lineheight{1.25}\smash{\begin{tabular}[t]{l}10\end{tabular}}}}%
    \put(0.2720825,0.00258892){\makebox(0,0)[lt]{\lineheight{1.25}\smash{\begin{tabular}[t]{l}11\end{tabular}}}}%
    \put(0.16334789,-0){\makebox(0,0)[lt]{\lineheight{1.25}\smash{\begin{tabular}[t]{l}12\end{tabular}}}}%
    \put(0.06755795,0.04401161){\makebox(0,0)[lt]{\lineheight{1.25}\smash{\begin{tabular}[t]{l}13\end{tabular}}}}%
    \put(0.00801283,0.12167912){\makebox(0,0)[lt]{\lineheight{1.25}\smash{\begin{tabular}[t]{l}14\end{tabular}}}}%
    \put(0,0){\includegraphics[width=\unitlength,page=3]{example14.pdf}}%
    \put(0.17003284,0.262851){\makebox(0,0)[lt]{\lineheight{1.25}\smash{\begin{tabular}[t]{l}1\end{tabular}}}}%
    \put(0.26796409,0.26492757){\makebox(0,0)[lt]{\lineheight{1.25}\smash{\begin{tabular}[t]{l}2\end{tabular}}}}%
    \put(0.21499795,0.32081504){\makebox(0,0)[lt]{\lineheight{1.25}\smash{\begin{tabular}[t]{l}3\end{tabular}}}}%
    \put(0.15306062,0.38190751){\makebox(0,0)[lt]{\lineheight{1.25}\smash{\begin{tabular}[t]{l}4\end{tabular}}}}%
    \put(0.27937165,0.38329189){\makebox(0,0)[lt]{\lineheight{1.25}\smash{\begin{tabular}[t]{l}5\end{tabular}}}}%
    \put(0.09092659,0.33530699){\makebox(0,0)[lt]{\lineheight{1.25}\smash{\begin{tabular}[t]{l}4\end{tabular}}}}%
    \put(0.07021524,0.28352862){\makebox(0,0)[lt]{\lineheight{1.25}\smash{\begin{tabular}[t]{l}4\end{tabular}}}}%
    \put(0.11403014,0.29751478){\makebox(0,0)[lt]{\lineheight{1.25}\smash{\begin{tabular}[t]{l}3\end{tabular}}}}%
    \put(0.06225178,0.32081504){\makebox(0,0)[lt]{\lineheight{1.25}\smash{\begin{tabular}[t]{l}3\end{tabular}}}}%
    \put(0.32114363,0.29751478){\makebox(0,0)[lt]{\lineheight{1.25}\smash{\begin{tabular}[t]{l}3\end{tabular}}}}%
    \put(0.38068873,0.30010368){\makebox(0,0)[lt]{\lineheight{1.25}\smash{\begin{tabular}[t]{l}3\end{tabular}}}}%
    \put(0.29903116,0.3374173){\makebox(0,0)[lt]{\lineheight{1.25}\smash{\begin{tabular}[t]{l}2\end{tabular}}}}%
    \put(0.33009822,0.37366219){\makebox(0,0)[lt]{\lineheight{1.25}\smash{\begin{tabular}[t]{l}2\end{tabular}}}}%
    \put(0.1721741,0.17690437){\makebox(0,0)[lt]{\lineheight{1.25}\smash{\begin{tabular}[t]{l}2\end{tabular}}}}%
    \put(0.13851814,0.1044147){\makebox(0,0)[lt]{\lineheight{1.25}\smash{\begin{tabular}[t]{l}2\end{tabular}}}}%
    \put(0.11004007,0.06816985){\makebox(0,0)[lt]{\lineheight{1.25}\smash{\begin{tabular}[t]{l}2\end{tabular}}}}%
    \put(0.1363769,0.33792964){\makebox(0,0)[lt]{\lineheight{1.25}\smash{\begin{tabular}[t]{l}1\end{tabular}}}}%
    \put(0.09236528,0.36381883){\makebox(0,0)[lt]{\lineheight{1.25}\smash{\begin{tabular}[t]{l}1\end{tabular}}}}%
    \put(0.26582294,0.17741671){\makebox(0,0)[lt]{\lineheight{1.25}\smash{\begin{tabular}[t]{l}1\end{tabular}}}}%
    \put(0.29947893,0.10233812){\makebox(0,0)[lt]{\lineheight{1.25}\smash{\begin{tabular}[t]{l}1\end{tabular}}}}%
    \put(0.34090156,0.07903783){\makebox(0,0)[lt]{\lineheight{1.25}\smash{\begin{tabular}[t]{l}1\end{tabular}}}}%
    \put(0.03397041,0.16961613){\makebox(0,0)[lt]{\lineheight{1.25}\smash{\begin{tabular}[t]{l}5'\end{tabular}}}}%
    \put(0.10904904,0.11007114){\makebox(0,0)[lt]{\lineheight{1.25}\smash{\begin{tabular}[t]{l}5'\end{tabular}}}}%
    \put(0.15306062,0.05570384){\makebox(0,0)[lt]{\lineheight{1.25}\smash{\begin{tabular}[t]{l}5'\end{tabular}}}}%
    \put(0.27937165,0.05449932){\makebox(0,0)[lt]{\lineheight{1.25}\smash{\begin{tabular}[t]{l}4'\end{tabular}}}}%
    \put(0.32856112,0.32892459){\makebox(0,0)[lt]{\lineheight{1.25}\smash{\begin{tabular}[t]{l}5\end{tabular}}}}%
    \put(0.34409465,0.10368874){\makebox(0,0)[lt]{\lineheight{1.25}\smash{\begin{tabular}[t]{l}4'\end{tabular}}}}%
    \put(0.36739493,0.16323391){\makebox(0,0)[lt]{\lineheight{1.25}\smash{\begin{tabular}[t]{l}4'\end{tabular}}}}%
    \put(0.39069522,0.2745575){\makebox(0,0)[lt]{\lineheight{1.25}\smash{\begin{tabular}[t]{l}5\end{tabular}}}}%
    \put(0.21499795,0.11887946){\makebox(0,0)[lt]{\lineheight{1.25}\smash{\begin{tabular}[t]{l}3'\end{tabular}}}}%
    \put(0.0493072,0.13959082){\makebox(0,0)[lt]{\lineheight{1.25}\smash{\begin{tabular}[t]{l}3'\end{tabular}}}}%
    \put(0.37292196,0.11887946){\makebox(0,0)[lt]{\lineheight{1.25}\smash{\begin{tabular}[t]{l}3'\end{tabular}}}}%
    \put(0.11403014,0.13959078){\makebox(0,0)[lt]{\lineheight{1.25}\smash{\begin{tabular}[t]{l}3'\end{tabular}}}}%
    \put(0.32114363,0.13959078){\makebox(0,0)[lt]{\lineheight{1.25}\smash{\begin{tabular}[t]{l}3'\end{tabular}}}}%
    \put(0,0){\includegraphics[width=\unitlength,page=4]{example14.pdf}}%
    \put(0.56721904,0.22264692){\makebox(0,0)[lt]{\lineheight{1.25}\smash{\begin{tabular}[t]{l}1\end{tabular}}}}%
    \put(0,0){\includegraphics[width=\unitlength,page=5]{example14.pdf}}%
    \put(0.65006447,0.39351552){\makebox(0,0)[lt]{\lineheight{1.25}\smash{\begin{tabular}[t]{l}2\end{tabular}}}}%
    \put(0.83905564,0.43234929){\makebox(0,0)[lt]{\lineheight{1.25}\smash{\begin{tabular}[t]{l}3\end{tabular}}}}%
    \put(0.98662423,0.31843689){\makebox(0,0)[lt]{\lineheight{1.25}\smash{\begin{tabular}[t]{l}4\end{tabular}}}}%
    \put(0.98921319,0.12944585){\makebox(0,0)[lt]{\lineheight{1.25}\smash{\begin{tabular}[t]{l}5\end{tabular}}}}%
    \put(0.63711983,0.04401161){\makebox(0,0)[lt]{\lineheight{1.25}\smash{\begin{tabular}[t]{l}7\end{tabular}}}}%
    \put(0.8446378,0.00680815){\makebox(0,0)[lt]{\lineheight{1.25}\smash{\begin{tabular}[t]{l}6\end{tabular}}}}%
    \put(0,0){\includegraphics[width=\unitlength,page=6]{example14.pdf}}%
    \put(0.70572632,0.36762634){\makebox(0,0)[lt]{\lineheight{1.25}\smash{\begin{tabular}[t]{l}3\end{tabular}}}}%
    \put(0.7639769,0.3029034){\makebox(0,0)[lt]{\lineheight{1.25}\smash{\begin{tabular}[t]{l}3\end{tabular}}}}%
    \put(0.90118958,0.22394138){\makebox(0,0)[lt]{\lineheight{1.25}\smash{\begin{tabular}[t]{l}3\end{tabular}}}}%
    \put(0.77562699,0.13332925){\makebox(0,0)[lt]{\lineheight{1.25}\smash{\begin{tabular}[t]{l}3\end{tabular}}}}%
    \put(0.70184303,0.0647229){\makebox(0,0)[lt]{\lineheight{1.25}\smash{\begin{tabular}[t]{l}3\end{tabular}}}}%
    \put(0.697959,0.29254774){\makebox(0,0)[lt]{\lineheight{1.25}\smash{\begin{tabular}[t]{l}4\end{tabular}}}}%
    \put(0.77821595,0.34820944){\makebox(0,0)[lt]{\lineheight{1.25}\smash{\begin{tabular}[t]{l}4\end{tabular}}}}%
    \put(0.88047822,0.32361473){\makebox(0,0)[lt]{\lineheight{1.25}\smash{\begin{tabular}[t]{l}4\end{tabular}}}}%
    \put(0.62288145,0.18899098){\makebox(0,0)[lt]{\lineheight{1.25}\smash{\begin{tabular}[t]{l}5\end{tabular}}}}%
    \put(0.78209931,0.08802319){\makebox(0,0)[lt]{\lineheight{1.25}\smash{\begin{tabular}[t]{l}5\end{tabular}}}}%
    \put(0.90507295,0.11391236){\makebox(0,0)[lt]{\lineheight{1.25}\smash{\begin{tabular}[t]{l}5\end{tabular}}}}%
    \put(0.80669419,0.39351551){\makebox(0,0)[lt]{\lineheight{1.25}\smash{\begin{tabular}[t]{l}$1^\pm$\end{tabular}}}}%
    \put(0.82222749,0.31843681){\makebox(0,0)[lt]{\lineheight{1.25}\smash{\begin{tabular}[t]{l}$1^\pm$\end{tabular}}}}%
    \put(0.7730387,0.2537138){\makebox(0,0)[lt]{\lineheight{1.25}\smash{\begin{tabular}[t]{l}$1^\pm$\end{tabular}}}}%
    \put(0.77562759,0.18122404){\makebox(0,0)[lt]{\lineheight{1.25}\smash{\begin{tabular}[t]{l}$2^\pm$\end{tabular}}}}%
    \put(0.82222861,0.11909001){\makebox(0,0)[lt]{\lineheight{1.25}\smash{\begin{tabular}[t]{l}$2^\pm$\end{tabular}}}}%
    \put(0.80928419,0.05436707){\makebox(0,0)[lt]{\lineheight{1.25}\smash{\begin{tabular}[t]{l}$2^\pm$\end{tabular}}}}%
    \put(0,0){\includegraphics[width=\unitlength,page=7]{example14.pdf}}%
    \put(0.69092185,0.16727323){\color[rgb]{1,0,0}\makebox(0,0)[lt]{\lineheight{1.25}\smash{\begin{tabular}[t]{l}$\tilde\zg$\end{tabular}}}}%
    \put(0.06742929,0.21306755){\color[rgb]{1,0,0}\makebox(0,0)[lt]{\lineheight{1.25}\smash{\begin{tabular}[t]{l}$\zg$\end{tabular}}}}%
    \put(0.35308366,0.22776823){\color[rgb]{1,0,0}\makebox(0,0)[lt]{\lineheight{1.25}\smash{\begin{tabular}[t]{l}$\zs\zg$\end{tabular}}}}%
  \end{picture}%
\endgroup%
}
\end{center}
\[\xymatrix{
4\ar[rr]&&3\ar[ld]\ar[rr]&&5\ar[ld]\\
&1\ar[lu]\ar[rr]^\za\ar[ld]&&2\ar[ld]\ar[lu]\\
4'\ar[rr]&&3'\ar[lu]\ar[rr]&&5'\ar[lu]\\
}
\qquad\qquad\qquad
\xymatrix{4\ar[rr]&&3\ar[ld]\ar[rr]\ar[ldd]&&5\ar[ld]\ar[ldd]\\
&1^+\ar[lu]\ar[rr]^{\za^+}&&2^+\ar[lu]\\
&1^-\ar[luu]\ar[rr]^{\za^-}&&2^-\ar[luu]\\} 
\]

To illustrate the correspondence between the projective resolutions over $A$ and $B$ we specify them below. Each morphism in these resolutions is of the form $f_\zg$ or $\tilde f_{\tilde \zg}$ for some 2-diagonal $\zg$ in $\cals$ or a 2-arc $\tilde \zg$ in $\calp$. The label on top of the arrows below indicate these $\zg$  and $\tilde \zg$. The projective resolutions are periodic and we illustrate one period. 

\subsubsection{Projective resolution of the radical of $P(5)$} \label{res5} 

Over the algebra $A$, the radical of $P(5)$ corresponds to the 2-diagonal $(5,8)$ in the 14-gon $\cals$. Over the algebra $B$, the radical of $P(5)$ corresponds to the 2-arc $(5,1)$ in the punctured 7-gon $\calp$. The projective resolutions are  given by the rotation orbit of $(5,8)$ in $\cals$ and of $(5,1)$ in $\calp$. In particular the period of the resolution is 14 over $A$ and $7$ over $B$.  Thus over $A$ we have $\zO_A^{14} (\rad P(5))=\rad P(5)$, and over $B$ we have  $\zO_B^{7}( \rad P(5))=\rad P(5)$. 
We point out that $\zO_A^7(\rad P(5))=\rad P(\zs 5).$ 
The top two rows below together form the resolution over $A$. The second row is the image of the first row under $\zs$. The third row is the corresponding resolution over $B$. It is obtained from the first row by replacing $P(i')$ by $P(i)$, for $i=3,4,5$,  replacing $P(i)$ by $P(i^+)\oplus P(i^-)$ for $i=1,2$, and reducing the arrow labels modulo 7. In order to keep it short we use the notation $P(i^\pm)=P(i^+)\oplus P(i^-)$, for $i=1,2$.

{\scriptsize \[
\xymatrix@C15pt@R10pt{
P(5')\ar[r]^(.5){(
10,13)}&P(4')\ar[r]^(.5){(
9,12)}&P(1)\ar[r]^(.45){(
8,11)}&P(3')\ar[r]^(.35){(
7,10)}&P(4')\oplus P(5)\ar[r]^(.6){(
6,9)}&P(3)
\ar[r]^(.45){(
5,8)}&P(2)\ar[r]^(.4){(
4,7)}&\rad P(5)\ar[r]&0
\\
P(5)\ar[r]^(.45){(
3,6)}&P(4)\ar[r]^(.45){(
2,5)}&P(1)\ar[r]^(.45){(
1,4)}&P(3)\ar[r]^(.34){(
14,3)}&P(4)\oplus P(5')\ar[r]^(.6){(
13,2)}&P(3')\ar[r]^(.5){(
12,1)}& P(2) \ar[r]^(.45){(
11,14)}&P(5')
\\ \\
P(5)\ar[r]^(.45){(
3,6)}&P(4)\ar[r]^(.45){(
2,5)}&P(1^\pm)\ar[r]^(.5){(
1,4)}&P(3)\ar[r]^(.34){(
7,3)}&P(4)\oplus P(5)\ar[r]^(.6){(
6,2)}&P(3)
\ar[r]^(.45){(
5,1)}&P(2^\pm)\ar[r]^(.45){(
4,7)}&\rad P(5)\ar[r]&0
}
\begin{array}
 {ccccccccccccccccccc}
\end{array}
\]
}

\subsubsection{Projective resolution of the radical of $P(3)$} \label{res3}The radical of $P(3)$ corresponds to the 2-diagonal $(2,7)$ in the 14-gon $\cals$ over the algebra $A$ and  to the 2-arc $(2,7)$ in the punctured 7-gon $\calp$ over the algebra $B$. The projective resolutions are  given by the rotation orbits of these curves in $\cals$ and $\calp$. Again, the period of the resolution is 14 over $A$ and $7$ over $B$.  The top two rows below is the resolution over $A$ and the bottom row is the corresponding resolution over $B$. Note that the correspondence between the resolutions is exactly as in  \ref{res5}.

{\scriptsize \[
\xymatrix@C10pt@R10pt{P(3')\ar[r]^(.4){(
7,12)}& P(1)\oplus P(5) \ar[r]^(.5){(
6,11)}&P(3)\oplus P(3')\ar[r]^(.5){(
5,10)}&P(4')\oplus P(2)\ar[r]^(.6){(
4,9)}&P(3)\ar[r]^(.35){(
3,8)}&P(4)\oplus P(2)\ar[r]^{(
2,7)}&P(1)\oplus P(5)\ar[r]^(.55){(
1,6)}&\rad P(3)
\ar[r]&0\\
P(3)\ar[r]^(.4){(
14,5)}& P(1)\oplus P(5') \ar[r]^(.5){(
13,4)}&P(3')\oplus P(3)\ar[r]^(.5){(
12,3)}&P(4)\oplus P(2)\ar[r]^(.6){(
11,2)}&P(3')\ar[r]^(.35){(
10,1)}&P(4')\oplus P(2)\ar[r]^{(
9,14)}&P(1)\oplus P(5')\ar[r]^(.55){(
8,13)}& P(3')
\\ \\
P(3)\ar[r]^(.37){(
7,5)}& P(1^\pm)\oplus  P(5) \ar[r]^(.51){(
6,4)}&P(3)\oplus P(3)\ar[r]^(.48){(
5,3)}&P(4)\oplus P(2^\pm)\ar[r]^(.65){(
4,2)}&P(3)\ar[r]^(.32){(
3,1)}&P(4)\oplus P(2^\pm)\ar[r]^(.5){(
2,7)}&P(1^\pm)\oplus P(5)\ar[r]^(.55){(
1,6)}&\rad P(3)
\ar[r]&0}
\begin{array}
 {ccccccccccccccccccc}
\end{array}
\]
}

\subsubsection{Projective resolution of the radicals of $P(1)$ and $P(1^+)$} \label{res1}The radical of $P(1)$ over the algebra $A$ corresponds to the 2-diagonal $(3,10)$ in the 14-gon $\cals$. This is different from the previous cases because $(3,10)$ is a diameter in $\cals$ and therefore it is fixed under $\zs=\zO^7$.  Therefore the projective resolution of $\rad P(1)$ has period 7. Over the algebra $B$, there are two corresponding vertices $1^+,1^-$ and the projective resolutions of the radicals of $P(1^+)$ and $P(1^-) $ are symmetric.  The radical of $P(1^+)$ corresponds  to the loop $(3,3)^+$ in the punctured 7-gon $\calp$. The syzygy functor $\zO$ acts on loops by moving the boundary endpoint to its clockwise neighbor (note that it doesn't change the sign since $(N+1)/2=4$ is even). Therefore the period of the projective resolution is $7$ over $B$.  The top row below is the resolution over $A$ and the bottom two rows are the corresponding resolution over $B$. The bottom row is obtained from the top row by removing $P(i')$ for $i=3,4,5$ and replacing $P(i)$ by $P(i^+)$ or $P(i^-)$, for $i=1,2$.

{\scriptsize \[
\xymatrix@C10pt@R10pt{
P(1)\ar[r]^(.5){(
8,1)}&P(2)\ar[r]^(.37){(
7,14)}& P(1)\oplus \atop P(5) \oplus P(5') \ar[r]^(.45){(
6,13)}&P(3)\oplus P(3')\ar[r]^(.5){(
5,12)}&P(1)\oplus P(2)\ar[r]^(.5){(
4,11)}&P(3)\oplus P(3')\ar[r]^(.52){(
3,10)}& P(2)\oplus \atop P(4)\oplus P(4')\ar[r]^{(
2,9)}&\rad P(1)
\ar[r]&0\\
\\
P(1^{{{\!\scalebox{.6}{+}}\!} })\ar[r]^(.5){(
1,1)^{{\!\scalebox{.6}{+}}\!}}&P(2^\textup{-} )\ar[r]^(.45){(
7,7)^{{{\!\scalebox{.6}{+}}\!} }}& P(1^\textup{-} )\oplus \atop P(5)  \ar[r]^(.5){(
6,6)^{{\!\scalebox{.6}{+}}\!}}&P(3)\ar[r]^(.35){(
5,5)^{{\!\scalebox{.6}{+}}\!} }&P(1^{{\!\scalebox{.6}{+}}\!})\oplus P(2^{\textup{-}} )\ar[r]^(.6){(
4,4)^{\!\scalebox{.6}{+}}\!}&P(3)\ar[r]^(.45){(
3,3)^{{\!\scalebox{.6}{+}}\!} }& P(2^{{\!\scalebox{.6}{+}}\!})\oplus \atop P(4)\ar[r]^
(.45){(
2,2)^{{\!\scalebox{.6}{+}}\!}}&\rad P(1^{{\!\scalebox{.6}{+}}\!})
\ar[r]&0\\
}
\]
}

\subsubsection{Auslander-Reiten quiver of $\cmp\,B$} 
To complete our study of Example~\ref{ex71}, we give the Auslander-Reiten quiver of the syzygy category $\cmp\,B$.

\newcommand{\km}{\scalebox{1}{-}}
\newcommand{\kp}{\scalebox{0.6}{+}}
{\scriptsize \[\xymatrix@R10pt@C4pt{
&{\begin{smallmatrix} 2^{\kp}\\3\ \, \\1^{\km}\end{smallmatrix}}\ar[ddr]&&&&
{\begin{smallmatrix} 1^{\km}\\4\ 2^{\km} \\3\ \end{smallmatrix}}\ar[ddr]
\\
&{\begin{smallmatrix} 2^{\km}\\3\ \, \\1^{\kp}\end{smallmatrix}}\ar[ddr]&&&&
{\begin{smallmatrix} 1^{\kp}\\4\ 2^{\kp} \\3\ \end{smallmatrix}}\ar[ddr]
\\
{\begin{smallmatrix} 3\ \, \\1^{\km}\end{smallmatrix}}\ar[dr]\ar[uur]
&&
{\begin{smallmatrix} 2^{\kp}\\3\ \end{smallmatrix}}\ar[dr]
&&
{\begin{smallmatrix} 2^{\km}4\\3\end{smallmatrix}}\ar[dr]\ar[uur]
&&
{\begin{smallmatrix} 1^{\km}\\42^{\km}2^{\kp}\\3\end{smallmatrix}}\ar[dr]
&&
{\begin{smallmatrix} 1^{\kp}5\\42^{\km}2^{\kp}\\3\end{smallmatrix}}\ar[dr]
&&
{\begin{smallmatrix} 1^{\km}\end{smallmatrix}}\ar[dr]
&&
{\begin{smallmatrix} 3\\1^{\kp}5\end{smallmatrix}}\ar[dr]
&&
{\begin{smallmatrix} 3\\1^{\km}\end{smallmatrix}}
&&
\\
{\begin{smallmatrix} 3\ \, \\1^{\kp}\end{smallmatrix}}\ar[r]\ar[uur]
&
{\begin{smallmatrix} 3 \end{smallmatrix}}\ar[r]\ar[ru]\ar[rd]
&
{\begin{smallmatrix} 2^{\km}\\3\ \end{smallmatrix}}\ar[r]
&
{\begin{smallmatrix} 2^{\kp}2^{\km}4\\33 \end{smallmatrix}}\ar[r]\ar[ru]\ar[rd]
&
{\begin{smallmatrix} 2^{\kp}4\\3\end{smallmatrix}}\ar[r]\ar[uur]
&
{\begin{smallmatrix} 2^{\kp}2^{\km}4\\3 \end{smallmatrix}}\ar[r]\ar[ru]\ar[rd]
&
{\begin{smallmatrix} 1^{\kp}\\42^{\kp}2^{\km}\\3\end{smallmatrix}}\ar[r]
&
{\begin{smallmatrix} 1^{\kp}1^{\km}5\\442^{\kp}2^{\kp}2^{\km}2^{\km}\\33 \end{smallmatrix}}\ar[r]\ar[ru]\ar[rd]
&
{\begin{smallmatrix} 1^{\km}5\\42^{\kp}2^{\km}\\3\end{smallmatrix}}\ar[r]
&
{\begin{smallmatrix} 1^{\kp}1^{\km}5\\42^{\kp}2^{\km}\\3 \end{smallmatrix}}\ar[r]\ar[ru]\ar[rd]
\ar[ddr]
&
{\begin{smallmatrix} 1^{\kp}\end{smallmatrix}}\ar[r]
&
{\begin{smallmatrix} 3\\1^{\kp}1^{\km}5\end{smallmatrix}}\ar[r]\ar[ru]\ar[rd]
&
{\begin{smallmatrix} 3\\1^{\km}5\end{smallmatrix}}\ar[r]
&
{\begin{smallmatrix} 33\\1^{\kp}1^{\km}5\end{smallmatrix}}\ar[r]\ar[ru]\ar[rd]
&
{\begin{smallmatrix} 3\\1^{\kp}\end{smallmatrix}}
\\
{\begin{smallmatrix} 3\\ 5\end{smallmatrix}}\ar[ru]\ar[rd]
&&
{\begin{smallmatrix} 4\\3\end{smallmatrix}}\ar[ru]
&&
{\begin{smallmatrix} 2^{\kp}2^{\km}\\3\end{smallmatrix}}\ar[ru]\ar[dr]
&&
{\begin{smallmatrix} 5\\42^{\kp}2^{\km}\\3 \end{smallmatrix}}\ar[ru]
&&
{\begin{smallmatrix} 1^{\kp}1^{\km}\\4 2^{\kp}2^{\km}\\3 \end{smallmatrix}}\ar[ru]
&&
{\begin{smallmatrix} 5 \end{smallmatrix}}\ar[ru]
&&
{\begin{smallmatrix} 3\\1^{\kp}1^{\km}\\\end{smallmatrix}}\ar[ru]
&&
{\begin{smallmatrix} 3\\5 \end{smallmatrix}}
\\
&{\begin{smallmatrix} 4\\3\\5\end{smallmatrix}}\ar[ru]
&&&&
{\begin{smallmatrix}5\\ 2^{\kp}2^{\km}\\3 \end{smallmatrix}}\ar[ru]
&&&&&
{\begin{smallmatrix} 3\\1^{\kp}1^{\km}5\\4 2^{\kp}2^{\km}\\3 \end{smallmatrix}}\ar[ruu]
&&
}
\]
}

\subsection{Example}\label{ex71a}
This example is less symmetric than the previous one. The left picture below shows the checkerboard polygon $\cals$ as well as the quiver of a dimer tree algebra $A$. The right picture shows the checkerboard punctured polygon $\calp$ and the quiver of the skew group algebra $B=\overline{e}\,AG\,\overline{e}$. 

\begin{center}
 \scalebox{0.57}{\Large
\begingroup%
  \makeatletter%
  \providecommand\color[2][]{%
    \errmessage{(Inkscape) Color is used for the text in Inkscape, but the package 'color.sty' is not loaded}%
    \renewcommand\color[2][]{}%
  }%
  \providecommand\transparent[1]{%
    \errmessage{(Inkscape) Transparency is used (non-zero) for the text in Inkscape, but the package 'transparent.sty' is not loaded}%
    \renewcommand\transparent[1]{}%
  }%
  \providecommand\rotatebox[2]{#2}%
  \newcommand*\fsize{\dimexpr\f@size pt\relax}%
  \newcommand*\lineheight[1]{\fontsize{\fsize}{#1\fsize}\selectfont}%
  \ifx\svgwidth\undefined%
    \setlength{\unitlength}{740.33842855bp}%
    \ifx\svgscale\undefined%
      \relax%
    \else%
      \setlength{\unitlength}{\unitlength * \real{\svgscale}}%
    \fi%
  \else%
    \setlength{\unitlength}{\svgwidth}%
  \fi%
  \global\let\svgwidth\undefined%
  \global\let\svgscale\undefined%
  \makeatother%
  \begin{picture}(1,0.47754408)%
    \lineheight{1}%
    \setlength\tabcolsep{0pt}%
    \put(0,0){\includegraphics[width=\unitlength,page=1]{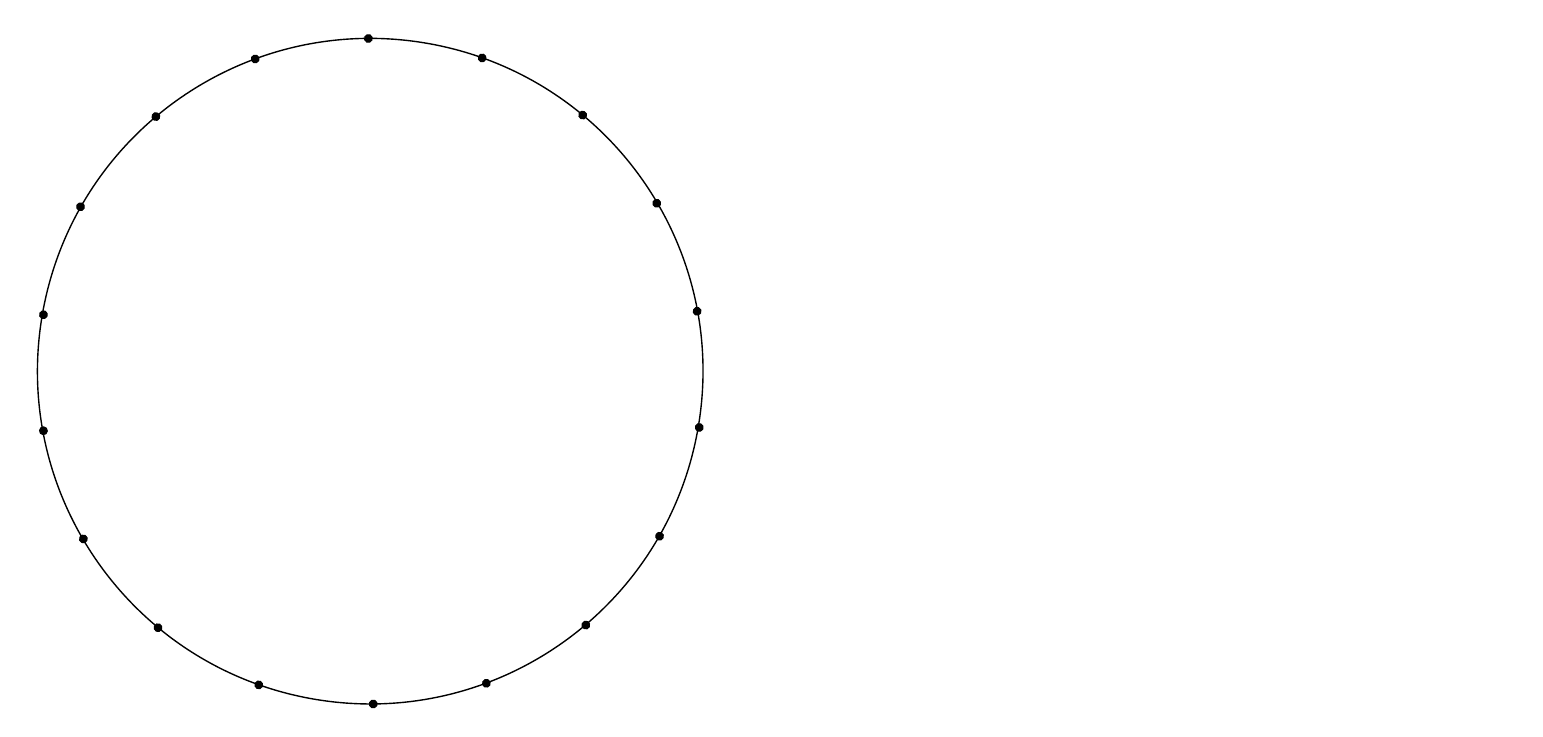}}%
    \put(0.0074037,0.272784){\makebox(0,0)[lt]{\lineheight{1.25}\smash{\begin{tabular}[t]{l}1\end{tabular}}}}%
    \put(0.03053587,0.34623272){\makebox(0,0)[lt]{\lineheight{1.25}\smash{\begin{tabular}[t]{l}2\end{tabular}}}}%
    \put(0.08530332,0.40996591){\makebox(0,0)[lt]{\lineheight{1.25}\smash{\begin{tabular}[t]{l}3\end{tabular}}}}%
    \put(0.15434876,0.44940219){\makebox(0,0)[lt]{\lineheight{1.25}\smash{\begin{tabular}[t]{l}4\end{tabular}}}}%
    \put(0.23299813,0.46200514){\makebox(0,0)[lt]{\lineheight{1.25}\smash{\begin{tabular}[t]{l}5\end{tabular}}}}%
    \put(0.31627394,0.45043906){\makebox(0,0)[lt]{\lineheight{1.25}\smash{\begin{tabular}[t]{l}6\end{tabular}}}}%
    \put(0.38584597,0.41175254){\makebox(0,0)[lt]{\lineheight{1.25}\smash{\begin{tabular}[t]{l}7\end{tabular}}}}%
    \put(0.43609856,0.34645588){\makebox(0,0)[lt]{\lineheight{1.25}\smash{\begin{tabular}[t]{l}8\end{tabular}}}}%
    \put(0.46293189,0.27370929){\makebox(0,0)[lt]{\lineheight{1.25}\smash{\begin{tabular}[t]{l}9\end{tabular}}}}%
    \put(0.4595179,0.19036952){\makebox(0,0)[lt]{\lineheight{1.25}\smash{\begin{tabular}[t]{l}10\end{tabular}}}}%
    \put(0.43343435,0.11641052){\makebox(0,0)[lt]{\lineheight{1.25}\smash{\begin{tabular}[t]{l}11\end{tabular}}}}%
    \put(0.38225642,0.05267741){\makebox(0,0)[lt]{\lineheight{1.25}\smash{\begin{tabular}[t]{l}12\end{tabular}}}}%
    \put(0.31037116,0.01156604){\makebox(0,0)[lt]{\lineheight{1.25}\smash{\begin{tabular}[t]{l}13\end{tabular}}}}%
    \put(0.23207285,0){\makebox(0,0)[lt]{\lineheight{1.25}\smash{\begin{tabular}[t]{l}14\end{tabular}}}}%
    \put(0.14972233,0.01324104){\makebox(0,0)[lt]{\lineheight{1.25}\smash{\begin{tabular}[t]{l}15\end{tabular}}}}%
    \put(0.0801503,0.05071516){\makebox(0,0)[lt]{\lineheight{1.25}\smash{\begin{tabular}[t]{l}16\end{tabular}}}}%
    \put(0.02469704,0.1176869){\makebox(0,0)[lt]{\lineheight{1.25}\smash{\begin{tabular}[t]{l}17\end{tabular}}}}%
    \put(-0.00220022,0.19135878){\makebox(0,0)[lt]{\lineheight{1.25}\smash{\begin{tabular}[t]{l}18\end{tabular}}}}%
    \put(0,0){\includegraphics[width=\unitlength,page=2]{example71a.pdf}}%
    \put(0.25630582,0.40712611){\makebox(0,0)[lt]{\lineheight{1.25}\smash{\begin{tabular}[t]{l}6\end{tabular}}}}%
    \put(0.33032876,0.3847437){\makebox(0,0)[lt]{\lineheight{1.25}\smash{\begin{tabular}[t]{l}6\end{tabular}}}}%
    \put(0.3709298,0.34883304){\makebox(0,0)[lt]{\lineheight{1.25}\smash{\begin{tabular}[t]{l}6\end{tabular}}}}%
    \put(0.09010528,0.12039881){\makebox(0,0)[lt]{\lineheight{1.25}\smash{\begin{tabular}[t]{l}6'\end{tabular}}}}%
    \put(0.14007077,0.07807503){\makebox(0,0)[lt]{\lineheight{1.25}\smash{\begin{tabular}[t]{l}6'\end{tabular}}}}%
    \put(0.20906857,0.05666569){\makebox(0,0)[lt]{\lineheight{1.25}\smash{\begin{tabular}[t]{l}6'\end{tabular}}}}%
    \put(0.3435699,0.35010939){\makebox(0,0)[lt]{\lineheight{1.25}\smash{\begin{tabular}[t]{l}2\end{tabular}}}}%
    \put(0.35438624,0.39018384){\makebox(0,0)[lt]{\lineheight{1.25}\smash{\begin{tabular}[t]{l}2\end{tabular}}}}%
    \put(0.31078615,0.33680433){\makebox(0,0)[lt]{\lineheight{1.25}\smash{\begin{tabular}[t]{l}2\end{tabular}}}}%
    \put(0.27961352,0.27220978){\makebox(0,0)[lt]{\lineheight{1.25}\smash{\begin{tabular}[t]{l}2\end{tabular}}}}%
    \put(0.19523691,0.20124981){\makebox(0,0)[lt]{\lineheight{1.25}\smash{\begin{tabular}[t]{l}2\end{tabular}}}}%
    \put(0.16915336,0.13943112){\makebox(0,0)[lt]{\lineheight{1.25}\smash{\begin{tabular}[t]{l}2\end{tabular}}}}%
    \put(0.13174318,0.11964906){\makebox(0,0)[lt]{\lineheight{1.25}\smash{\begin{tabular}[t]{l}2\end{tabular}}}}%
    \put(0.11630042,0.07651165){\makebox(0,0)[lt]{\lineheight{1.25}\smash{\begin{tabular}[t]{l}2\end{tabular}}}}%
    \put(0.31644947,0.41192808){\makebox(0,0)[lt]{\lineheight{1.25}\smash{\begin{tabular}[t]{l}5\end{tabular}}}}%
    \put(0.31228568,0.36473845){\makebox(0,0)[lt]{\lineheight{1.25}\smash{\begin{tabular}[t]{l}5\end{tabular}}}}%
    \put(0.34715947,0.32968913){\makebox(0,0)[lt]{\lineheight{1.25}\smash{\begin{tabular}[t]{l}5\end{tabular}}}}%
    \put(0.37601888,0.27776149){\makebox(0,0)[lt]{\lineheight{1.25}\smash{\begin{tabular}[t]{l}5\end{tabular}}}}%
    \put(0.38093239,0.32552534){\makebox(0,0)[lt]{\lineheight{1.25}\smash{\begin{tabular}[t]{l}3\end{tabular}}}}%
    \put(0.326165,0.30124471){\makebox(0,0)[lt]{\lineheight{1.25}\smash{\begin{tabular}[t]{l}3\end{tabular}}}}%
    \put(0.23363631,0.34218051){\makebox(0,0)[lt]{\lineheight{1.25}\smash{\begin{tabular}[t]{l}3\end{tabular}}}}%
    \put(0.12537777,0.36705167){\makebox(0,0)[lt]{\lineheight{1.25}\smash{\begin{tabular}[t]{l}3\end{tabular}}}}%
    \put(0.1805439,0.39989934){\makebox(0,0)[lt]{\lineheight{1.25}\smash{\begin{tabular}[t]{l}4\end{tabular}}}}%
    \put(0.14619672,0.33281607){\makebox(0,0)[lt]{\lineheight{1.25}\smash{\begin{tabular}[t]{l}4\end{tabular}}}}%
    \put(0.1018306,0.26862021){\makebox(0,0)[lt]{\lineheight{1.25}\smash{\begin{tabular}[t]{l}4\end{tabular}}}}%
    \put(0.0425646,0.27555987){\makebox(0,0)[lt]{\lineheight{1.25}\smash{\begin{tabular}[t]{l}4\end{tabular}}}}%
    \put(0.05690655,0.32298896){\makebox(0,0)[lt]{\lineheight{1.25}\smash{\begin{tabular}[t]{l}1\end{tabular}}}}%
    \put(0.10640938,0.3173733){\makebox(0,0)[lt]{\lineheight{1.25}\smash{\begin{tabular}[t]{l}1\end{tabular}}}}%
    \put(0.16412822,0.25879312){\makebox(0,0)[lt]{\lineheight{1.25}\smash{\begin{tabular}[t]{l}1\end{tabular}}}}%
    \put(0.30378258,0.20616336){\makebox(0,0)[lt]{\lineheight{1.25}\smash{\begin{tabular}[t]{l}1\end{tabular}}}}%
    \put(0.36306487,0.14978481){\makebox(0,0)[lt]{\lineheight{1.25}\smash{\begin{tabular}[t]{l}1\end{tabular}}}}%
    \put(0.40833997,0.14573261){\makebox(0,0)[lt]{\lineheight{1.25}\smash{\begin{tabular}[t]{l}1\end{tabular}}}}%
    \put(0.09946974,0.34900858){\makebox(0,0)[lt]{\lineheight{1.25}\smash{\begin{tabular}[t]{l}7\end{tabular}}}}%
    \put(0.06836104,0.2951664){\makebox(0,0)[lt]{\lineheight{1.25}\smash{\begin{tabular}[t]{l}7\end{tabular}}}}%
    \put(0.05921976,0.22825865){\makebox(0,0)[lt]{\lineheight{1.25}\smash{\begin{tabular}[t]{l}7\end{tabular}}}}%
    \put(0.241342,0.12733846){\makebox(0,0)[lt]{\lineheight{1.25}\smash{\begin{tabular}[t]{l}3'\end{tabular}}}}%
    \put(0.34455915,0.09599032){\makebox(0,0)[lt]{\lineheight{1.25}\smash{\begin{tabular}[t]{l}3'\end{tabular}}}}%
    \put(0.14793569,0.17025269){\makebox(0,0)[lt]{\lineheight{1.25}\smash{\begin{tabular}[t]{l}3'\end{tabular}}}}%
    \put(0.09669388,0.14498286){\makebox(0,0)[lt]{\lineheight{1.25}\smash{\begin{tabular}[t]{l}3'\end{tabular}}}}%
    \put(0.08426645,0.19783577){\makebox(0,0)[lt]{\lineheight{1.25}\smash{\begin{tabular}[t]{l}5'\end{tabular}}}}%
    \put(0.12740387,0.14156882){\makebox(0,0)[lt]{\lineheight{1.25}\smash{\begin{tabular}[t]{l}5'\end{tabular}}}}%
    \put(0.16776541,0.10235572){\makebox(0,0)[lt]{\lineheight{1.25}\smash{\begin{tabular}[t]{l}5'\end{tabular}}}}%
    \put(0.15765122,0.05210316){\makebox(0,0)[lt]{\lineheight{1.25}\smash{\begin{tabular}[t]{l}5'\end{tabular}}}}%
    \put(0.2964442,0.06679615){\makebox(0,0)[lt]{\lineheight{1.25}\smash{\begin{tabular}[t]{l}4'\end{tabular}}}}%
    \put(0.3335673,0.13249148){\makebox(0,0)[lt]{\lineheight{1.25}\smash{\begin{tabular}[t]{l}4'\end{tabular}}}}%
    \put(0.36381465,0.18395648){\makebox(0,0)[lt]{\lineheight{1.25}\smash{\begin{tabular}[t]{l}4'\end{tabular}}}}%
    \put(0.41869363,0.18505729){\makebox(0,0)[lt]{\lineheight{1.25}\smash{\begin{tabular}[t]{l}4'\end{tabular}}}}%
    \put(0.40146424,0.23334773){\makebox(0,0)[lt]{\lineheight{1.25}\smash{\begin{tabular}[t]{l}7'\end{tabular}}}}%
    \put(0.39538593,0.16441386){\makebox(0,0)[lt]{\lineheight{1.25}\smash{\begin{tabular}[t]{l}7'\end{tabular}}}}%
    \put(0.36722863,0.11722424){\makebox(0,0)[lt]{\lineheight{1.25}\smash{\begin{tabular}[t]{l}7'\end{tabular}}}}%
    \put(0,0){\includegraphics[width=\unitlength,page=3]{example71a.pdf}}%
    \put(0.53418973,0.272784){\makebox(0,0)[lt]{\lineheight{1.25}\smash{\begin{tabular}[t]{l}1\end{tabular}}}}%
    \put(0.61178105,0.41049437){\makebox(0,0)[lt]{\lineheight{1.25}\smash{\begin{tabular}[t]{l}2\end{tabular}}}}%
    \put(0.76021803,0.46551412){\makebox(0,0)[lt]{\lineheight{1.25}\smash{\begin{tabular}[t]{l}3\end{tabular}}}}%
    \put(0.90877367,0.41237005){\makebox(0,0)[lt]{\lineheight{1.25}\smash{\begin{tabular}[t]{l}4\end{tabular}}}}%
    \put(0.99084298,0.27264){\makebox(0,0)[lt]{\lineheight{1.25}\smash{\begin{tabular}[t]{l}5\end{tabular}}}}%
    \put(0.96178097,0.11497146){\makebox(0,0)[lt]{\lineheight{1.25}\smash{\begin{tabular}[t]{l}6\end{tabular}}}}%
    \put(0.8451033,0.00984491){\makebox(0,0)[lt]{\lineheight{1.25}\smash{\begin{tabular}[t]{l}7\end{tabular}}}}%
    \put(0.68296575,0.01207748){\makebox(0,0)[lt]{\lineheight{1.25}\smash{\begin{tabular}[t]{l}8\end{tabular}}}}%
    \put(0.558403,0.11359976){\makebox(0,0)[lt]{\lineheight{1.25}\smash{\begin{tabular}[t]{l}9\end{tabular}}}}%
    \put(0,0){\includegraphics[width=\unitlength,page=4]{example71a.pdf}}%
    \put(0.62836904,0.36455858){\makebox(0,0)[lt]{\lineheight{1.25}\smash{\begin{tabular}[t]{l}$1^\pm$\end{tabular}}}}%
    \put(0.69523033,0.32606267){\makebox(0,0)[lt]{\lineheight{1.25}\smash{\begin{tabular}[t]{l}$1^\pm$\end{tabular}}}}%
    \put(0.72156928,0.25312305){\makebox(0,0)[lt]{\lineheight{1.25}\smash{\begin{tabular}[t]{l}$1^\pm$\end{tabular}}}}%
    \put(0.75601252,0.19436608){\makebox(0,0)[lt]{\lineheight{1.25}\smash{\begin{tabular}[t]{l}$2^\pm$\end{tabular}}}}%
    \put(0.80261308,0.14573971){\makebox(0,0)[lt]{\lineheight{1.25}\smash{\begin{tabular}[t]{l}$2^\pm$\end{tabular}}}}%
    \put(0.79653486,0.09913941){\makebox(0,0)[lt]{\lineheight{1.25}\smash{\begin{tabular}[t]{l}$2^\pm$\end{tabular}}}}%
    \put(0.83705633,0.06064353){\makebox(0,0)[lt]{\lineheight{1.25}\smash{\begin{tabular}[t]{l}$2^\pm$\end{tabular}}}}%
    \put(0.586093,0.27376191){\makebox(0,0)[lt]{\lineheight{1.25}\smash{\begin{tabular}[t]{l}4\end{tabular}}}}%
    \put(0.66233952,0.26751756){\makebox(0,0)[lt]{\lineheight{1.25}\smash{\begin{tabular}[t]{l}4\end{tabular}}}}%
    \put(0.77766321,0.32276821){\makebox(0,0)[lt]{\lineheight{1.25}\smash{\begin{tabular}[t]{l}4\end{tabular}}}}%
    \put(0.87259507,0.35839725){\makebox(0,0)[lt]{\lineheight{1.25}\smash{\begin{tabular}[t]{l}4\end{tabular}}}}%
    \put(0.71471087,0.40306693){\makebox(0,0)[lt]{\lineheight{1.25}\smash{\begin{tabular}[t]{l}7\end{tabular}}}}%
    \put(0.63432907,0.3152679){\makebox(0,0)[lt]{\lineheight{1.25}\smash{\begin{tabular}[t]{l}7\end{tabular}}}}%
    \put(0.60486141,0.19743235){\makebox(0,0)[lt]{\lineheight{1.25}\smash{\begin{tabular}[t]{l}7\end{tabular}}}}%
    \put(0.79509262,0.39102866){\makebox(0,0)[lt]{\lineheight{1.25}\smash{\begin{tabular}[t]{l}3\end{tabular}}}}%
    \put(0.83408677,0.24672304){\makebox(0,0)[lt]{\lineheight{1.25}\smash{\begin{tabular}[t]{l}3\end{tabular}}}}%
    \put(0.76651341,0.14794029){\makebox(0,0)[lt]{\lineheight{1.25}\smash{\begin{tabular}[t]{l}3\end{tabular}}}}%
    \put(0.73319488,0.07728632){\makebox(0,0)[lt]{\lineheight{1.25}\smash{\begin{tabular}[t]{l}3\end{tabular}}}}%
    \put(0.91880513,0.20051303){\makebox(0,0)[lt]{\lineheight{1.25}\smash{\begin{tabular}[t]{l}6\end{tabular}}}}%
    \put(0.85362533,0.09422999){\makebox(0,0)[lt]{\lineheight{1.25}\smash{\begin{tabular}[t]{l}6\end{tabular}}}}%
    \put(0.7693927,0.05908674){\makebox(0,0)[lt]{\lineheight{1.25}\smash{\begin{tabular}[t]{l}6\end{tabular}}}}%
    \put(0.90879289,0.11579468){\makebox(0,0)[lt]{\lineheight{1.25}\smash{\begin{tabular}[t]{l}5\end{tabular}}}}%
    \put(0.83505818,0.1351319){\makebox(0,0)[lt]{\lineheight{1.25}\smash{\begin{tabular}[t]{l}5\end{tabular}}}}%
    \put(0.78171536,0.11579468){\makebox(0,0)[lt]{\lineheight{1.25}\smash{\begin{tabular}[t]{l}5\end{tabular}}}}%
    \put(0.66870214,0.13436171){\makebox(0,0)[lt]{\lineheight{1.25}\smash{\begin{tabular}[t]{l}5\end{tabular}}}}%
  \end{picture}%
\endgroup%
}
\end{center}
\[\xymatrix{
7\ar[r]&4\ar[d]&3\ar[l]\ar[r]&5\ar[ld]&6\ar[l]\\
&1\ar[lu]\ar[r]^\za\ar[ld]&2\ar[d]\ar[u]\ar[rru]\ar[rrd]\\
7'\ar[r]&4'\ar[u]&3'\ar[l]\ar[r]&5'\ar[lu]&6'\ar[l]\\
}
\qquad\qquad\qquad
\xymatrix{
7\ar[r]&4\ar[d]\ar@/_10pt/[dd]&3\ar[l]\ar[r]&5\ar@/_
0pt/[ddl]\ar[ld]&6\ar[l]\\
&1^+\ar[lu]\ar[r]^{\za^+}&2^+\ar[u]\ar[rru]\\
&1^-\ar[luu]\ar[r]^{\za^-}&2^-\ar@/^10pt/[uu]\ar[rruu]\\
}\]

\subsection{Algebra of syzygy type $\mathbb{D}$ that is not a skew group algebra}\label{ex72}
Let $B$ be the Jacobian algebra of the following quiver where the potential is given by the sum of all chordless cycles. Note that $B$ is not a skew group algebra of a dimer tree algebra. 
Nevertheless, by directly computing the syzygy category of $B$ we obtain that it is a 2-cluster category of type $\mathbb{D}_{6}$, so it can be modeled by punctured polygon of size 11.  In the polygon we can place arcs corresponding to radicals of projective modules and obtain a checkerboard pattern shown below, where shaded regions together with the arcs attached to the puncture with the same sign give chordless cycles in the quiver.  Moreover, as in the previous examples, the projective presentation of an indecomposable syzygy $M_{\tilde \zg}$ is given by intersection patterns of the 2-arc $\tilde\zg$ with the arcs of the checkerboard polygon.

\begin{minipage}{0.5\textwidth}
\[\xymatrix{
&5\ar[dl]\\
6\ar[dr]&&2\ar[dd]\ar[ul]\\
&1\ar[ur]\ar[dl]&&4\ar[ul]\\
7\ar[uu]\ar[d]&&3\ar[ur]\ar[ul]\\
8\ar[rr]&&9\ar[u]
}
\]
\end{minipage}
\begin{minipage}{0.5\textwidth}
\begin{center}
\scalebox{0.67}{\Large
\begingroup%
  \makeatletter%
  \providecommand\color[2][]{%
    \errmessage{(Inkscape) Color is used for the text in Inkscape, but the package 'color.sty' is not loaded}%
    \renewcommand\color[2][]{}%
  }%
  \providecommand\transparent[1]{%
    \errmessage{(Inkscape) Transparency is used (non-zero) for the text in Inkscape, but the package 'transparent.sty' is not loaded}%
    \renewcommand\transparent[1]{}%
  }%
  \providecommand\rotatebox[2]{#2}%
  \newcommand*\fsize{\dimexpr\f@size pt\relax}%
  \newcommand*\lineheight[1]{\fontsize{\fsize}{#1\fsize}\selectfont}%
  \ifx\svgwidth\undefined%
    \setlength{\unitlength}{244.50155914bp}%
    \ifx\svgscale\undefined%
      \relax%
    \else%
      \setlength{\unitlength}{\unitlength * \real{\svgscale}}%
    \fi%
  \else%
    \setlength{\unitlength}{\svgwidth}%
  \fi%
  \global\let\svgwidth\undefined%
  \global\let\svgscale\undefined%
  \makeatother%
  \begin{picture}(1,0.99351652)%
    \lineheight{1}%
    \setlength\tabcolsep{0pt}%
    \put(0,0){\includegraphics[width=\unitlength,page=1]{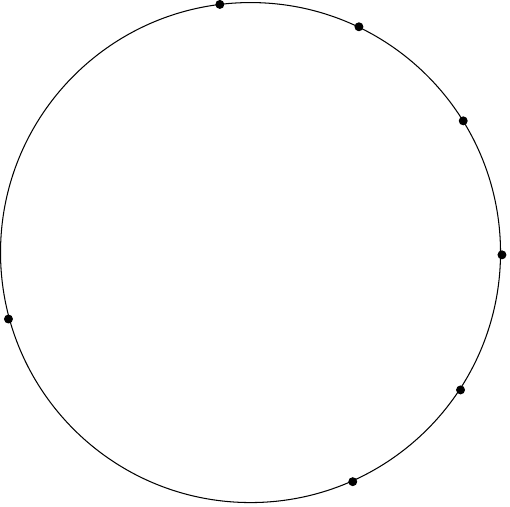}}%
    \put(0.86081332,0.68824158){\makebox(0,0)[lt]{\lineheight{1.25}\smash{\begin{tabular}[t]{l}2\end{tabular}}}}%
    \put(0.76734336,0.69902541){\makebox(0,0)[lt]{\lineheight{1.25}\smash{\begin{tabular}[t]{l}5\end{tabular}}}}%
    \put(0,0){\includegraphics[width=\unitlength,page=2]{example15.pdf}}%
    \put(0.89612863,0.44574399){\makebox(0,0)[lt]{\lineheight{1.25}\smash{\begin{tabular}[t]{l}3\end{tabular}}}}%
    \put(0.4093652,0.07806938){\makebox(0,0)[lt]{\lineheight{1.25}\smash{\begin{tabular}[t]{l}3\end{tabular}}}}%
    \put(0.56338705,0.25101277){\makebox(0,0)[lt]{\lineheight{1.25}\smash{\begin{tabular}[t]{l}3\end{tabular}}}}%
    \put(0.17919423,0.18454319){\makebox(0,0)[lt]{\lineheight{1.25}\smash{\begin{tabular}[t]{l}8\\\end{tabular}}}}%
    \put(0.57765487,0.11577608){\makebox(0,0)[lt]{\lineheight{1.25}\smash{\begin{tabular}[t]{l}9\\\end{tabular}}}}%
    \put(0.49535621,0.86146761){\makebox(0,0)[lt]{\lineheight{1.25}\smash{\begin{tabular}[t]{l}5\end{tabular}}}}%
    \put(0.28174148,0.57111807){\makebox(0,0)[lt]{\lineheight{1.25}\smash{\begin{tabular}[t]{l}$7^+$\end{tabular}}}}%
    \put(0.50148999,0.65092111){\makebox(0,0)[lt]{\lineheight{1.25}\smash{\begin{tabular}[t]{l}$6^+$\end{tabular}}}}%
    \put(0.65387388,0.3159317){\makebox(0,0)[lt]{\lineheight{1.25}\smash{\begin{tabular}[t]{l}$1^+$\end{tabular}}}}%
    \put(0,0){\includegraphics[width=\unitlength,page=3]{example15.pdf}}%
    \put(0.81264683,0.3845664){\makebox(0,0)[lt]{\lineheight{1.25}\smash{\begin{tabular}[t]{l}2\end{tabular}}}}%
    \put(0.69753888,0.17327168){\makebox(0,0)[lt]{\lineheight{1.25}\smash{\begin{tabular}[t]{l}2\end{tabular}}}}%
    \put(0.30698512,0.17830176){\makebox(0,0)[lt]{\lineheight{1.25}\smash{\begin{tabular}[t]{l}9\\\end{tabular}}}}%
    \put(0.09095991,0.3422916){\makebox(0,0)[lt]{\lineheight{1.25}\smash{\begin{tabular}[t]{l}9\\\end{tabular}}}}%
    \put(0.13173783,0.43375742){\makebox(0,0)[lt]{\lineheight{1.25}\smash{\begin{tabular}[t]{l}8\\\end{tabular}}}}%
    \put(0.18219628,0.70654838){\makebox(0,0)[lt]{\lineheight{1.25}\smash{\begin{tabular}[t]{l}8\\\end{tabular}}}}%
    \put(0.89134834,0.5176388){\makebox(0,0)[lt]{\lineheight{1.25}\smash{\begin{tabular}[t]{l}5\end{tabular}}}}%
    \put(0,0){\includegraphics[width=\unitlength,page=4]{example15.pdf}}%
    \put(0.82890394,0.29151534){\makebox(0,0)[lt]{\lineheight{1.25}\smash{\begin{tabular}[t]{l}$1^+$\end{tabular}}}}%
    \put(0,0){\includegraphics[width=\unitlength,page=5]{example15.pdf}}%
    \put(0.69178879,0.28038765){\makebox(0,0)[lt]{\lineheight{1.25}\smash{\begin{tabular}[t]{l}$4^-$\end{tabular}}}}%
    \put(0.87266853,0.25673534){\makebox(0,0)[lt]{\lineheight{1.25}\smash{\begin{tabular}[t]{l}$4^-$\end{tabular}}}}%
  \end{picture}%
\endgroup%
}
\end{center}
\end{minipage}

\subsection{Application to dimer models} \label{sect 74}
In  \cite[Figure 2]{SS4}, we explained in an example how to associate  a consistent dimer model to every dimer tree algebra. This is done by embedding the checkerboard polygon into a unique alternating strand diagram on a slightly larger disk. The mutable part of the quiver of the alternating strand diagram is equal to the twist of the quiver of the dimer tree algebra in the sense of Bocklandt \cite{Bocklandt}. It is natural to ask what happens to the dimer model under the action of $G$. A similar situation was studied recently by Baur, Pasquali and Velasco in \cite{BPV}, who construct orbifold diagrams using a rotation action on an alternating strand diagram on a polygon. The situation here is similar but not the same. The reason is that our rotation $\zs$ does not preserve the orientation of the diameters.

In the example in Figure~\ref{figdimer1}, we show how this correspondence behaves  under the $G$-action.  Let $A$ denote the dimer tree algebra from the example  in section~\ref{ex71a}. Then the top right picture in Figure~\ref{figdimer1} shows the alternating strand diagram that contains the checkerboard polygon of $A$. 
 The top left picture shows the quiver $\widetilde Q$ of the corresponding dimer model; its vertices correspond to the white regions in the polygon, two vertices are connected by an arrow if the corresponding regions share a vertex, and the direction of the arrow is determined by the orientation of the strands in the polygon. The vertices corresponding to the boundary regions are called frozen vertices and the remaining vertices 1,2,3,4,5,6,7,3',4',5',6',7' are called mutable vertices. The full subquiver of $\widetilde Q$ on the mutable vertices is called  the twist  of the quiver $Q$ of the dimer tree algebra $A$. 

Note that in this polygon  the action of $\zs$ is given by a rotation by angle $\pi$  as indicated by the labels of the white regions and the vertices. 
 This symmetry induces a symmetry on the twisted quiver as well, but it reverses the direction of the arrows.
 
 The bottom right picture in Figure~\ref{figdimer1}  shows the resulting  strand diagram on the punctured polygon obtained by the rotational symmetry.  In order to reconcile the problem that $\zs$ does not preserve the orientations, we add the so-called rift in the punctured polygon that goes from the puncture to the boundary segment between vertices 8 and 9.  Thus, the white regions with labels 1,2,8, 8' in the polygon give white regions with labels $1^\pm, 2^\pm, 8^\pm, 8'^\pm$ in the punctured polygon with rift.  
 The rift allows us to orient strands such that each strand that passes through the rift changes orientation, which ensures that the boundary of white regions is alternating and the boundary of shaded regions is oriented.

One may want to associate a quiver to this dimer model which is shown on the bottom left, where the rift behaves in a similar way as the boundary.
Note that this quiver is almost the one corresponding to the $G$-action on the quiver $\widetilde Q$, except that we also have arrows $8^\pm \to 8'^\pm$.  

\begin{remark}
There are strong connections between dimer models on a disk and Cohen-Macaulay categories, see \cite{BKM, Pr}.  In particular, a consistent dimer model on a disk yields a quiver $Q$ and the associated Jacobian algebra $A_Q$, with frozen vertices coming from the boundary regions of the dimer model.  Let $e$ be the sum of the primitive idempotents associated to the frozen vertices, then $eA_Qe$ is called the boundary algebra of $A_Q$.   It is known that the Cohen-Macaulay category of the boundary algebra is 2-Calabi-Yau.  Moreover, it is a 1-cluster category that provides an additive categorification of cluster algebras coming from coordinate rings of (open) positroid varieties.   On the other hand we are studying something different here, because for certain dimer models we are considering the Cohen-Macaulay category of the dimer tree algebra $A_Q/\langle e\rangle$ obtained by removing the frozen vertices, and show that it is a 2-cluster category of type $\mathbb{A}_r$.  The rank $r$ of the category is related to $2(r+2)$ marked points on the boundary of the disk of the dimer model.      
\end{remark}

\begin{figure}
 \begin{minipage}{0.5\textwidth}
\small\[\xymatrix@R8pt@C5pt{
&&&&13\ar[r]\ar[ld]&14\ar[d]\\
&&&12\ar[rd]&&6\ar[r]\ar[ld]&15\ar[lu]\ar[rd]\\
&&11\ar[ld]\ar[ru]&&3\ar[uu]\ar[rr]\ar[ld]&&5\ar[lu]\ar[dd]&16\ar[l]\\
&10\ar[rd]&&4\ar[lu]\ar[rdd]&&&&&8'\ar[lu]\ar[dr]\\
&&7\ar[lld]\ar[ru]&&&&2\ar[lluu]\ar[rru]\ar[dd]&&&9'\ar[lld]\\
9\ar[uur]\ar[dr]&&&&1\ar[llu]\ar[rru]\ar[ldd]&&&7'\ar[lu]\ar[rd]\\
&8\ar[rrru]&&&&&4'\ar[ru]\ar[ld]&&10'\ar[uur]\ar[ld]\\
&&16'\ar[lu]\ar[rd]&5'\ar[l]\ar[rr]&&3'\ar[luu]\ar[ld]\ar[rd]&&11'\ar[lu]\\
&&&15'\ar[r]&6'\ar[lu]\ar[d]&&12'\ar[ru]\ar[ld]\\
&&&&14'\ar[lu]\ar[r]&13'\ar[uu]
}
\]
\vspace{1in}
\[\xymatrix@R8pt@C5pt{
&&&&13\ar[r]\ar[ld]&14\ar[d]\\
&&&12\ar[rd]&&6\ar[r]\ar[ld]&15\ar[lu]\ar[rd]\\
&&11\ar[ld]\ar[ru]&&3\ar[uu]\ar[rr]\ar[ld]&&5\ar[lu]\ar[dd]\ar@/^10pt/[ddd]&16\ar[l]\\
&10\ar[rd]&&4\ar[lu]\ar[rdd]\ar@/_10pt/[rddd]&&&&&8'^+\ar[lu]\ar@/^17pt/[dddlllllll]\\
&&7\ar[lld]\ar[ru]&&&&2^+\ar[lluu]\ar[rru]&&8'^-\ar[luu]\ar@/^17pt/[dddlllllll]\\
9\ar[ddr]\ar[rd]\ar[uur]&&&&1^+\ar[llu]\ar[rru]&&2^-\ar@/^10pt/[lluuu]\ar[rru]\\
&8^+\ar[rrru]&&&1^-\ar@/^10pt/[lluu]\ar[rru]\\
&8^-\ar[rrru] &&\\
}
\]
\end{minipage}
\begin{minipage}{0.4\textwidth}
\begin{center}
\scalebox{0.6}{ 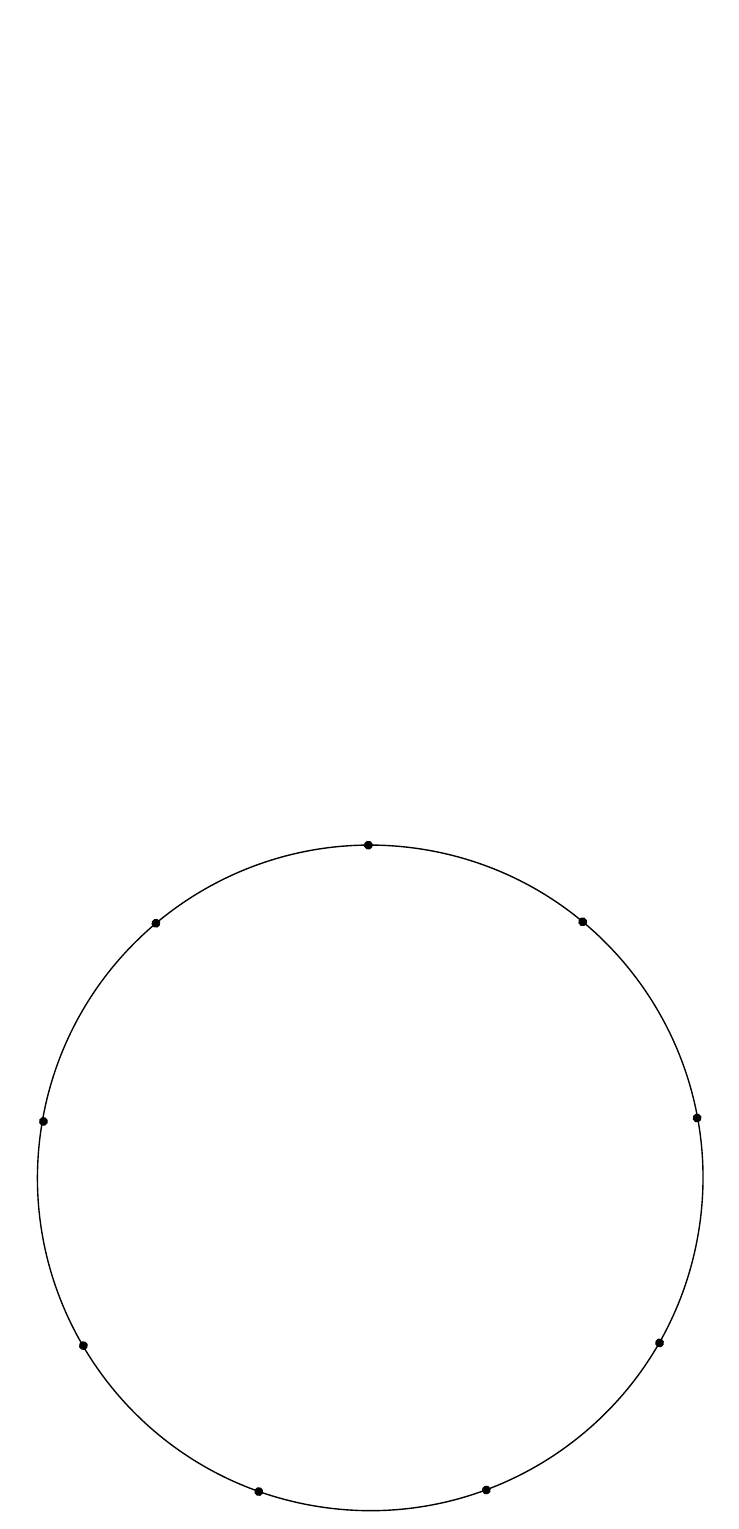}
\end{center}
\end{minipage}
\caption{The dimer models of the example in section \ref{ex71a}.}
\label{figdimer1}

\vspace{1in}
\end{figure}


\subsection{Algebras of syzygy type $\mathbb{E}$}\label{sect 75} 
It is natural to ask if there are 2-Calabi-Yau tilted algebras that are of syzygy type $\mathbb{E}_n$, with $n=6,7,8$. We give here three examples.   
 The Jacobian algebras of the following quivers with the potential given by the sum of the chordless cycles have syzygy categories of types $\mathbb{E}_6, \mathbb{E}_7, \mathbb{E}_8$ respectively.  Observe that moving from left to right we increase the length of the chordless cycle at the top right by adding a vertex.  Moreover, if this chordless where to have length three, that is the quiver would be a collection of six 3-cycles joined together at an interior vertex, then it would be the same quiver as shown in Example~\ref{ex71} on the right whose corresponding syzygy category is of type $\mathbb{D}_4$. These examples were checked by computer using the QPA package in \cite{GAP}.

\[\xymatrix@R=10pt@C=10pt{&6\ar[ld]\ar[rr]&&7\ar[ld]&8\ar[l]\\
4\ar[rr]&&3\ar[ld]\ar[ul]\ar[rr]&&5\ar[ld]\ar[u]\\
&1\ar[lu]\ar[rr]&&2\ar[lu]\\
}
\qquad
\xymatrix@R=10pt@C=10pt{&6\ar[ld]\ar[rr]&&7\ar[ld]&8\ar[l]&9\ar[l]\\
4\ar[rr]&&3\ar[ld]\ar[ul]\ar[rr]&&5\ar[ld]\ar[ur]\\
&1\ar[lu]\ar[rr]&&2\ar[lu]\\
}
\qquad
\xymatrix@R=10pt@C=10pt{&6\ar[ld]\ar[rr]&&7\ar[ld]&8\ar[l]&9\ar[l]&10\ar[l]\\
4\ar[rr]&&3\ar[ld]\ar[ul]\ar[rr]&&5\ar[ld]\ar[urr]\\
&1\ar[lu]\ar[rr]&&2\ar[lu]\\
}
\]

\end{document}